\crefname{thm}{Theorem}{Theorems}
\crefname{claim}{Claim}{Claims}
\newcommand{\thickhline}{%
    \noalign {\ifnum 0=`}\fi \hrule height 1pt
    \futurelet \reserved@a \@xhline
}
\newcolumntype{B}{@{\hskip\tabcolsep\vrule width 1pt\hskip\tabcolsep}}
\author{Pinaki Mondal}
\title{Analytic compactifications of $\cc^2$ part II - one irreducible curve at infinity}
\DeclareMathOperator{\ind}{ind}
\newcommand{\Rmnum}[1]{\expandafter\@slowromancap\romannumeral #1@}
\DeclareMathOperator\Div{div}
\DeclareMathOperator\ord{ord} 
\DeclareMathOperator\pole{pole}
\DeclareMathOperator\proj{Proj}
\DeclareMathOperator\sing{Sing} 
\DeclareMathOperator\supp{Supp}
\newcommand{\scrB}{\ensuremath{\mathcal{B}}}
\newcommand{\scrC}{\ensuremath{\mathcal{C}}}
\newcommand{\scrE}{\ensuremath{\mathcal{E}}}
\newcommand{\scrG}{\ensuremath{\mathcal{G}}}
\newcommand{\scrI}{\ensuremath{\mathcal{I}}}
\newcommand{\scrY}{\ensuremath{\mathcal{Y}}}
\newcommand{\cc}{\ensuremath{\mathbb{C}}}
\newcommand{\GG}{\ensuremath{\mathbb{G}}}
\newcommand{\kk}{\ensuremath{\mathbb{K}}}
\newcommand{\pp}{\ensuremath{\mathbb{P}}}
\newcommand{\qq}{\ensuremath{\mathbb{Q}}}
\newcommand{\zz}{\ensuremath{\mathbb{Z}}}
\newcommand{\sheaf}{\ensuremath{\mathcal{O}}}
\newcommand{\ntorus}{(\cc^*)^n}
\newcommand{\WP}{\ensuremath{{\bf W}\pp}}
\newcommand{\im}{\ensuremath{\Rightarrow}}
\newcommand{\into}{\ensuremath{\hookrightarrow}}
\newtheorem{thm}{Theorem}[section]
\newtheorem*{thm*}{Theorem}
\newtheorem{lemma}[thm]{Lemma}
\newtheorem*{lemma*}{Lemma}
\newtheorem{prop}[thm]{Proposition}
\newtheorem*{prop*}{Proposition}
\newtheorem{cor}[thm]{Corollary}
\newtheorem{claim}[thm]{Claim}
\newtheorem*{claim*}{Claim}
\newtheorem{proclaim}{Claim}[thm]
\newtheorem*{conjecture*}{Conjecture}
\theoremstyle{definition} 
\newtheorem{algorithm}[thm]{Algorithm}
\newtheorem*{constrinition*}{Construction-Definition}
\newtheorem*{convention*}{Convention}
\newtheorem{defn}[thm]{Definition}
\newtheorem*{defn*}{Definition}
\newtheorem*{definotation*}{Definition-Notation}
\newtheorem{example}[thm]{Example}
\newtheorem*{example*}{Example}
\newtheorem*{fact*}{Fact}
\newtheorem*{facts*}{Facts}
\newtheorem{notation}[thm]{Notation}
\newtheorem*{bold-note*}{Note}
\newtheorem{problem}[thm]{Problem}
\newtheorem*{problem*}{Problem}
\newtheorem{bold-question}[thm]{Question}
\newtheorem*{bold-question*}{Question}
\newtheorem{rem}[thm]{Remark}
\newtheorem{reminition}[thm]{Remark-Definition}
\newtheorem*{reminition*}{Remark-Definition}
\newtheorem{remexample*}{Remark-Example}
\newtheorem*{remtation*}{Remark-Notation}
\newtheorem*{remuestion*}{Remark-Question}
\newtheorem*{remvention*}{Remark-Convention}
\theoremstyle{remark}
\newtheorem*{rem*}{Remark}
\newtheorem*{note*}{Note}
\newtheorem*{notation*}{Notation}
\newtheorem*{question*}{Question}
\newtheorem*{questions*}{Questions}
\newcounter{UnorderedProofTempCtr}
\newcommand{\tempcommand}{}
\newcommand{\baromegastar}{\bar \omega^*}
\newcommand{\hot}{\text{h.o.t.}}
\newcommand{\lot}{\text{l.o.t.}}
\newcommand{\ldt}{\text{l.d.t.}}
\newcommand{\dpsx}[2]{{#1} \langle \langle #2 \rangle \rangle }
\newcommand{\dpsxc}{\dpsx{\cc}{x}}
\newcommand{\xomegatheta}{\bar X_{\vec \omega,\vec \theta}}
\newcommand{\xomegaprimetheta}{\bar X_{\vec \omega',\vec \theta}}
\newcommand{\xomegathetaprime}{\bar X_{\vec \omega,\vec \theta'}}
\newcommand{\xomegaprimeomegadtheta}{\bar X_{\vec \omega'_{\vec \omega, d},\vec \theta}}
\newcommand{\phiomegatheta}{\phi_{\vec \omega,\vec \theta}}
\newcommand{\tildephiomegatheta}{\tilde \phi_{\vec \omega,\vec \theta}}
\newcommand{\iomegatheta}{\iota_{\vec \omega,\vec \theta}}
\newcommand{\Xxy}{\bar X_{(x,y)}}
\newcommand{\deltaomegatheta}{\delta_{\vec \omega,\vec \theta}}
\newcommand{\deltaomegathetaprime}{\delta_{\vec \omega,\vec \theta'}}
\newcommand{\yomega}{\scrY_{\vec \omega}}
\newcommand{\yomegaprime}{\scrY_{\vec \omega'}}
\newcommand{\yomegaess}{\scrY^{e}_{\vec \omega}}
\newcommand{\yomegaessalg}{\scrY^{e, alg}_{\vec \omega}}
\newcommand{\gthetatheta}{g_{\vec \theta, \vec{\check \theta}}}
\newcommand{\gthetathetapprime}{g_{\vec \theta', \vec{\check {\theta'}}}}
\newcommand{\cthetathetac}{C_{\vec \theta, \vec{\check \theta}, c}}
\newcommand{\cthetathetacpprime}{C_{\vec \theta', \vec{\check {\theta'}}, c'}}
\newcommand{\cd}{\scrC_{\vec d}}
\newcommand{\cdbar}{\bar \scrC_{\vec d}}
\newcommand{\Etildephi}{\scrE_{\tilde \phi}}
\newcommand{\Eomega}{\scrE_{\vec \omega}}
\newcommand{\Eomegaprime}{\scrE_{\vec \omega'}}
\newlist{prooflist}{enumerate}{3}
\setlist[prooflist,1]{label=(\roman*)}
\setlist[prooflist,2]{label=(\arabic*)}
\setlist[prooflist,3]{label=(\alph*)}
\newlist{defnlist}{enumerate}{3}
\setlist[defnlist,1]{label=(\alph*)}
\setlist[defnlist,2]{label=(\arabic*), ref=(\alph{defnlisti}.\arabic*)}
\setlist[defnlist,3]{label=(\roman*), ref=(\alph{defnlisti}.\arabic{defnlistii}.\roman*)}
\newcommand{\skewness}{index}
\newcommand{\reachable}[1]{key compatible with {#1}}
\begin{document}

\begin{abstract} 
We classify {\em primitive normal compactifications} of $\cc^2$ (i.e.\ normal analytic surfaces containing $\cc^2$ for which the curve at infinity is irreducible), compute the moduli space of these surfaces and their groups of auomorphisms. In particular we show that in `most' of these surfaces $\cc^2$ is `rigidly embedded'. As an application we give a description of `embedded isomorphism classes' of planar curves with one place at infinity. We also compute the canonical divisor of these surfaces; it turns out that their log discrepancy is related to the Frobenius number of the semigroup of poles along the curve at infinity. We  use the computation to classify Gorenstein primitive compactifications of $\cc^2$ with rational and minimally elliptic singularities, extending a result of Brenton, Drucker and Prins \cite{brenton-graph-1}. As another application we characterize weighted projective spaces of the form $\pp^2(1,1,q)$ in terms of their {\em log discrepancy} and {\em \skewness}, generalizing a characterization of $\pp^2$ due to Borisov \cite{borisov}. 
\end{abstract}

\maketitle

\section{Introduction} \label{sec-intro}
{\em Normal analytic compactifications} (henceforth to be denoted by simply `compactifications')  of $\cc^2$ are arguably the simplest, and consequently a well-studied, class of surfaces: see e.g. \cite{morrow}, \cite{brentonfication}, \cite{brenton-singular}, \cite{brenton-graph-1}, \cite{miyanishi-zhang}, \cite{furushima}, \cite{ohta}, \cite{kojima}, \cite{koji-hashi}, \cite{favsson-dynamic}. In \cite{sub2-1} we studied singularities of these surfaces and of the curves at infinity (i.e.\ the complement of $\cc^2$), and in \cite{algebraicity} we gave an effective algorithm to determine the algebraicity of the simplest among these surfaces, namely those for which the curve at infinity (i.e.\ the complement of $\cc^2$) is irreducible; following \cite{ohta} we call these {\em primitive} compactifications.\\

Combining results of \cite{algebraicity} and \cite{sub2-1} yields a description of singularities of primitive compactifications (\cref{general-structure-thm}), and in the case of algebraic primitive compactifications, an explicit description of the defining equations (\cref{projective-embedding}) and the singularities of the curve at infinity (\cref{projllary}). In particular, all primitive algebraic compactifications of $\cc^2$ are {\em weighted complete intersections} and the curve at infinity has at most one singular point, which is at worst a {\em toric} (or monomial) singularity. Starting from these observations, we undertake in this article a detailed study of primitive compactifications.\\

The order of vanishing of polynomials along the curve at infinity on a primitive compactification of $\cc^2$ is a discrete valuation on $\cc[x,y]$, and it completely determines the compactification. The valuation is in turn determined by a finite sequence of polynomials called the {\em key forms} - these are natural analogues of the {\em key polynomials} (introduced by MacLane \cite{maclane-key} and widely used in valuation theory) of valuations centered at the origin. The {\em key sequence} associated to the compactification is the sequence consisting of orders of poles of these key forms along the curve at infinity. The notion of key sequences generalize the notion of {\em $\delta$-sequences} which are ubiquitous (see e.g.\ \cite[Section 2.1]{sathaye}, \cite[Section 3]{suzuki}) in the theory of plane curves with one place at infinity. The main technical observation of this article is that under appropriate choices of coordinates on $\cc[x,y]$, the key sequence associated to a primitive compactification can be brought to a (unique) normal form (\cref{normal-thm-0}). Moreover, if the key sequence is in the normal form, then the automorphisms of $\cc[x,y]$ that preserve the key sequence or the corresponding valuation can be explicitly described (\cref{normal-morphism,preserving-thm}). Using these properties of normal forms of key sequences, we establish a number of results, namely:\\

\begin{itemize}[wide]
\item We show that the embeddings of $\cc^2$ in `most' primitive compactifications are `rigid'; more precisely, if $\bar X$ is a primitive compactification of $\cc^2$ which is not isomorphic to a weighted projective surface of the form $\pp^2(1,1,q)$ (for some positive integer $q$), then $\bar X$ has only one subset isomorphic to $\cc^2$ (\cref{rigid-prop}).\\

\item We compute the groups of automorphisms of primitive compactifications of $\cc^2$ (\cref{aut-cor}). In particular, it turns out that `most' primitive compactifications, including all the non-algebraic ones, admit only finitely many automorphisms (\cref{aut-cor1}).\\

\item We explicitly describe the moduli space of primitive compactifications, i.e.\ the space of (isomorphism classes of) compact analytic surfaces of Picard rank $1$ which contain a copy of $\cc^2$. The moduli space of primitive compactifications with a fixed key sequence (in the normal form) turns out to be of the form $(\cc^*)^k \times \cc^l$ for some $k,l \geq 0$ (\cref{moduli-cor}). As an application of our result, we give a description (originally due in another form to Oka \cite{oka-moduli}) of the moduli space of {\em embedded isomorphism classes} of planar curves with one place at infinity with a fixed $\delta$-sequence (in the normal form) (\cref{one-place-cor}); in particular, the latter space is a quotient of $(\cc^*)^k \times \cc^l$ under an (explicitly described) action of $(\cc^*)^2$ . \\
\end{itemize}

The remaining main result of this article is a computation of the canonical divisors of primitive compactifications in terms of associated key sequences (\cref{canonical-thm}). This leads to a characterization (\cref{singularity-cor}) of the primitive compactifications with rational and elliptic singularities and those which are Gorenstein. As an immediate application we recover the classification in \cite{brenton-graph-1} of primitive Gorenstein compactifications of $\cc^2$ with rational singularities (\cref{rational-cor}), and in addition classify primitive Gorenstein compactifications of $\cc^2$ with minimally elliptic singularities (\cref{elliptic-cor}). \\



Two invariants of divisorial valuations (corresponding to curves at infinity on compactifications of $\cc^2$) play a special role in this article: {\em log discrepancy}\footnote{We follow \cite{jonsson-dykovich} to call it ``log discrepancy''; it is called ``$\bar K$ label'' in \cite{borisov}.} (\cref{log-discrepancy}) and {\em index}\footnote{It is called ``determinant label'' in \cite{borisov}; in \cite{jonsson-dykovich} it is defined, but not given any name; the motivation for our choice of the term `index' is explained in \cref{index}} (\cref{index}). As an application of our computation of the canonical class, we give a characterization of weighted projective spaces of the form $\pp^2(1, 1, q)$, $q \geq 0$, in terms of the log discrepancy and index of the valuation at infinity (\cref{11q-thm}). As a special case we recover Borisov's \cite{borisov} characterization of $\pp^2$ as (up to isomorphism) the unique compactification of $\cc^2$ such that the valuation corresponding to the curve at infinity has index $1$ and log discrepancy $-2$. \\

It follows from our computation of the canonical divisor (\cref{canonical-thm}) that the log discrepancy $A_\nu$ and the index $\alpha_\nu$ of a divisorial valuation $\nu$ centered at infinity satisfy the following identity:
\begin{align}
A_\nu + \alpha_\nu
	 &= \sum_{k=1}^{n+1}\alpha_k \omega_k - \sum_{k=0}^{n+1} \omega_k
	 \label{log-index}
\end{align}
where $(\omega_0, \ldots, \omega_{n+1})$ is the key sequence associated to $\nu$, and for each $k = 1, \ldots, n+1$, $\alpha_k\omega_k$ is the smallest positive multiple of $\omega_k$ which is in the (additive) group generated by $\omega_0, \ldots, \omega_{k-1}$. It turns out (see e.g.\ \cref{basis-lemma}) that $\omega_0, \ldots, \omega_{n+1}$ generate the {\em semigroup of poles} of polynomials along the corresponding curve at infinity. In the case that each $\alpha_k\omega_k$ is the semigroup generated by $\omega_0, \ldots, \omega_{k-1}$ (which is the case e.g.\ when $\nu$ comes from a primitive algebraic compactification of $\cc^2$ and the curve at infinity is non-singular - see assertion \eqref{curve-non-singularity} of \cref{projllary}), it follows from a result of Herzog \cite[Proposition 2.1]{herzog} that the semigroup of poles is {\em symmetric}\footnote{\label{symmetric-footnote} A sub-semigroup $S$ of $\zz$ is called {\em symmetric} if there exists $m$ in the group $\tilde S$ generated by $S$ such that for all $s \in \tilde S$, $s \in S$ iff $m - s \not\in S$.} and the expression on the right hand side of \eqref{log-index} computes its {\em Frobenius number}\footnote{The Frobenius number of a sub-semigroup of $\zz$ is the greatest integer not belonging to it.}!\\


In \cite{g2a-unpublished} we use the results of this article to classify $\GG^2_a$-surfaces (i.e.\ equivariant compactifications of $\cc^2$ with the additive group structure) of Picard rank one and answer some questions posed by Hassett and Tschinkel \cite{hassett-tschinkel}.  \\

We finish the introduction with a mention of one of the things {\em not} achieved in this work, namely a description of non-algebraic primitive compactifications of $\cc^2$ as explicit as the description of algebraic ones.
%
More precisely, in \cref{structure-section} we see that a primitive compactification $\bar X$ of $\cc^2$ which is not isomorphic to a weighted projective surface has a unique point $P_\infty$ with non-cyclic quotient singularity. The complement of $P_\infty$ in $\bar X$ is quasi-projective, and can be explicitly described in terms of key forms of the valuation associated to the curve $C_\infty$ at infinity (assertion \eqref{general-assertion} of \cref{projective-embedding}). However, if $\bar X$ is non-algebraic, then unlike the algebraic case we do not have an explicit description of any neighborhood of $P_\infty$ in $\bar X$. In particular, we believe that answer(s) to any of the following questions would be interesting:

\begin{problem}
Assume $\bar X$ is non-algebraic.
\begin{enumerate}
\item Find an explicit description of the (analytic) local ring $\sheaf_{\bar X, P_\infty}$ of $\bar X$ at $P_\infty$. 
\item In particular, describe when $C_\infty$ is singular at $P_\infty$ and the corresponding singularity type.
\end{enumerate}
\end{problem}  

\subsection{Organization}
In \cref{presection} we recall and introduce some preliminary notions and results necessary for the statement of the results of this article. In particular, we introduce {\em descending Puiseux series} (which in our setting are more convenient than usual Puiseux series for studying valuations centered at infinity), {\em key sequences}, and describe their relation with divisorial valuations centered at infinity. In \cref{structure-section} we present basic results about singularities of primitive compactifications and the curves at infinity, and in the case of algebraic primitive compactifications, their embedding into (weighted) projective spaces. The results of this section are mostly reformulation or simple applications of results from \cite{sub2-1,algebraicity}. \Cref{normal-section} is the technical heart of this work; here we define the normal forms of key sequences, and state their basic properties. The proof of these properties are differed to the appendices, essentially because of the length. \Cref{automorphic-section,moduli-section} consist of applications of normal forms. In \cref{automorphic-section} we establish the `rigidity' of $\cc^2$ in `most' primitive compactifications and compute their groups of automorphism. We use the description of these groups of automorphisms in \cref{moduli-section} to compute moduli spaces of primitive compactifications (modulo isomorphisms) and of curves in $\cc^2$ with one place at infinity (modulo automorphisms of $\cc^2$). In \cref{canonical-section} we compute the canonical divisor of primitive compactifications and apply it to characterize weighted projective spaces $\pp^2(1,1,q)$ in terms of log discrepancy and index, and to classify (Gorenstein) primitive compactifications with rational and elliptic singularities. The main tool in our computation of canonical divisor is a result of Kuo and Parusi\'nski \cite{kuo-parusinski-pol-arc} on multiplicities of Puiseux roots of polynomials. \Cref{key-appendix} collects some technical results on key forms that are used throughout the article. The remaining four appendices are devoted to the proof of the properties of normal forms stated in \cref{normal-section}. In particular, we study in \cref{theta-appendix} the effect of changes of the coefficients of a descending Puiseux series on the associated key forms, and in \cref{change-appendix} the effect of change of coordinates on $\cc^2$ on the coefficients of a descending Puiseux series. Finally, in \cref{normal-proof-1,normal-proof-2} we combine these results to prove the properties of key forms.  


\section{Preliminaries} \label{presection}

\subsection{Divisorial semidegrees and descending Puiseux series}
Let $\Xxy$ be a copy of $\pp^2$ such that $X := \cc^2$ is embedded into $\Xxy$ via the map $(x,y) \mapsto [1:x:y]$. 

 \label{key-section}
\begin{defn}[Divisorial discrete valuations] \label{divisorial-defn}
A discrete valuation on $\cc(x,y)$ is a map $\nu: \cc(x,y)\setminus\{0\} \to \zz$ such that for all $f,g \in \cc(x,y)\setminus \{0\}$,
\begin{itemize}
\item $\nu(f+g) \geq \min\{\nu(f), \nu(g)\}$,
\item $\nu(fg) = \nu(f) + \nu(g)$.
\end{itemize}
A discrete valuation $\nu$ on $\cc(x,y)$ is called {\em divisorial} iff there exists a normal algebraic surface $Y_\nu$ equipped with a birational map $\sigma: Y_\nu \to \Xxy$ and a curve $C_\nu$ on $Y_\nu$ such that for all non-zero $f \in \cc[x,y]$, $\nu(f)$ is the order of vanishing of $\sigma^*(f)$ along $C_\nu$. The {\em center} of $\nu$ on $\Xxy$ is $\sigma(C_\nu)$. $\nu$ is said to be {\em centered at infinity} (on $\Xxy$) iff the center of $\nu$ on $\Xxy$ is contained in $\Xxy \setminus X$; equivalently, $\nu$ is centered at infinity iff there is a non-zero polynomial $f \in \cc[x,y]$ such that $\nu(f) < 0$. 
\end{defn}

\begin{defn}[Divisorial semidegrees] \label{semi-defn}
A {\em divisorial semidegree} on $\cc[x,y]$ is a map $\delta : \cc(x,y)\setminus\{0\} \to \zz$ such that $-\delta$ is a divisorial discrete valuation centered at infinity. 
\end{defn}

\begin{defn}[Descending Puiseux series] \label{dpuiseuxnition}
The field of {\em descending Puiseux series} in $x$ is 
$$\dpsxc := \bigcup_{p=1}^\infty \cc((x^{-1/p})) = \left\{\sum_{j \leq k} a_j x^{j/p} : k,p \in \zz,\ p \geq 1 \right\},$$
where for each integer $p \geq 1$, $\cc((x^{-1/p}))$ denotes the field of Laurent series in $x^{-1/p}$. Let $\phi$ be a descending Puiseux series in $x$. The {\em polydromy order} (terminology taken from \cite{casas-alvero}) of $\phi$ is the smallest positive integer $p$ such that $\phi \in \cc((x^{-1/p}))$. For any $r \in \qq$, let us denote by $[\phi]_{>r}$ (resp.\ $[\phi]_{\geq r}$) sum of all terms of $\phi$ with order greater than (resp.\ greater than or equal to) $r$. Then the {\em Puiseux pairs} of $\phi$ are the unique sequence of pairs of relatively prime integers $(q_1, p_1), \ldots, (q_k,p_k)$ such that the polydromy order of $\phi$ is $p_1\cdots p_k$, and for all $j$, $1 \leq j \leq k$,
\begin{itemize}
\item $p_j \geq 2$,
\item $[\phi]_{>\frac{q_j}{p_1\cdots p_j}} \in \cc((x^{-\frac{1}{p_0\cdots p_{j-1}}}))$ (where we set $p_0 := 1$), and 
\item $[\phi]_{\geq \frac{q_j}{p_1\cdots p_j}} \not\in \cc((x^{-\frac{1}{p_0\cdots p_{j-1}}}))$.
\end{itemize}
The exponents $q_j/(p_1\cdots p_j)$, $1 \leq j \leq k$, are called the {\em characteristic exponents} of $\phi$. Let $\phi = \sum_{q \leq q_0} a_q x^{q/p}$, where $p$ is the polydromy order of $\phi$. Then the {\em conjugates} of $\phi$ are $\phi_j := \sum_{q \leq q_0} a_q \zeta^q x^{q/p}$, $1 \leq j \leq p$, where $\zeta$ is a primitive $p$-th root of unity. 
\end{defn}

The relation between descending Puiseux series and semidegrees is given by the following proposition, which is a reformulation of the corresponding result for Puiseux series and valuations \cite[Proposition 4.1]{favsson-tree}.

\begin{prop}[cf.\ {\cite[Proposition 4.1]{favsson-tree}}] \label{phi-delta}
Let $\delta$ be a divisorial semidegree on $\cc(x,y)$ such that $\delta(x) > 0$. Then there exists a {\em descending Puiseux polynomial} (i.e.\ a descending Puiseux series with finitely many terms) $\phi_\delta \in \dpsxc$ (unique up to conjugacy) and a unique rational number $r_\delta < \ord_x(\phi_\delta)$ such that for every polynomial $f \in \cc[x,y]$, 
\begin{align}
\delta(f) = p\deg_x\left( f(x,y)|_{y = \phi_\delta(x) + \xi x^{r_\delta}}\right), \label{phi-delta-defn}
\end{align}
where $\xi$ is an indeterminate and $p$ is the smallest positive integer such that all exponents of $x$ appearing in $\phi_\delta(x^p) + \xi x^{rp}$ are integers.
Conversely, given a Puiseux polynomial $\phi_\delta$ and a rational number $r_\delta < \ord_x(\phi_\delta)$, \eqref{phi-delta-defn} defines a divisorial semidegree $\delta$ on $\cc(x,y)$ such that $\delta(x) > 0$.  \qed
\end{prop}

\begin{defn} \label{generic-descending-defn}
If $\delta$, $\phi$ and $r$ are as in \cref{phi-delta}, then we say that $\tilde \phi_\delta(x,\xi):= \phi(x) + \xi x^{r}$ is the {\em generic descending Puiseux series} associated to $\delta$ and we call the integer $p$ from \eqref{phi-delta-defn} the {\em polydromy order} of $\tilde \phi_\delta$. Let the Puiseux pairs of $\phi$ be $(q_1, p_1), \ldots, (q_l,p_l)$. Then $p_1 \cdots p_l$ divides $p$. Let $p_{l+1} := p/(p_1 \cdots p_l)$. Then $r$ can be expressed as $q_{l+1}/p$, $q_{l+1} \in \zz$, $\gcd(q_{l+1}, p_{l+1}) = 1$. Then the {\em formal Puiseux pairs} (resp.\ {\em formal characteristic exponents}) of $\tilde \phi_\delta$ are $(q_j, p_j)$ (resp.\ $q_j/(p_1\cdots p_j)$), $1 \leq j \leq l+1$. Note that 
\begin{enumerate}
\item $\delta(x) = p_1 \cdots p_{l+1}$,
\item it is possible that $p_{l+1} = 1$, whereas $p_k \geq 2$ for $1 \leq k \leq l$. 
\end{enumerate}
\end{defn}

\begin{defn} \label{determining-defn}
Given a divisorial semidegree $\delta$ on $\cc[X]$, $X := \cc^2$, we say that $\delta$ is {\em primitive} iff there exists a (necessarily unique) normal analytic compactification $\bar X$ of $X$ such that $C_\infty := \bar X \setminus X$ is an irreducible curve and $\delta$ is the order of pole along $C_\infty$.
\end{defn}

The following result, which is an immediate corollary of \cite[Proposition 3.5]{sub2-1}, gives a connection between descending Puiseux series of a primitive semidegree with the geometry of the associated compactification.  

\begin{prop} \label{main-curve-prop}
Let $\bar X$ be a primitive compactification of $X$, $C_\infty := \bar X \setminus X$ be the curve at infinity, $\delta$ be the associated semidegree on $\cc[x,y]$ and let $\tilde \phi_\delta(x,\xi):= \phi_\delta(x) + \xi x^{r_\delta}$ be the generic descending Puiseux series associated to $\delta$. Then there is a unique point $P_\infty \in C_\infty$ such that for all $P \in C_\infty \setminus P_\infty$ and all $f \in \cc[x,y]\setminus \{0\}$, $P$ is on the curve (on $\bar X$) defined by $f$ iff there is a descending Puiseux root\footnote{A {\em descending Puiseux root} of $f$ is a descending Puiseux series $\phi(x)$ such that $f(x,\phi(x)) \equiv 0$. It follows from the standard theory of Puiseux series that if $\deg_y(f) = d$, then $f$ has $d$ descending Puiseux roots.} $\phi(x)$ of $f$ of the form 
\begin{align}
\phi(x) =  \phi_\delta(x) + c_P x^{r_\delta} + \ldt  \label{generic-expansion}
\end{align}
for some $c_P \in \cc$ (where $\ldt$ denotes terms with lower degree in $x$). \qed
\end{prop}

\subsection{Key sequences and key forms}

\begin{defn}[Key sequences]
\label{key-seqn}
A sequence $\vec\omega := (\omega_0, \ldots, \omega_{n+1})$, $n \in \zz_{\geq 0}$, of integers is called a {\em key sequence} if it has the following properties: 
\begin{enumerate}
\item \label{positive-1-property} $\omega_0 \geq 1$.
\item \label{gcd-1-property} Let $e_k := \gcd(|\omega_0|, \ldots, |\omega_k|)$, $0 \leq k \leq n+1$ and $\alpha_k := e_{k-1}/e_k$, $1 \leq k \leq n+1$. Then $e_{n+1} = 1$, and
\item \label{smaller-property}  $\omega_{k+1} < \alpha_k\omega_k$, $1 \leq k \leq n$.
\end{enumerate} 
Moreover, $\vec \omega$ is called {\em primitive} if $\omega_{n+1} > 0$ (or equivalently, $\omega_k > 0$ for all $k$, $0 \leq k \leq n+1$), and it is called {\em algebraic} if 
\begin{enumerate}
\setcounter{enumi}{3}
\item \label{algebraic-semigroup-property} $\alpha_k\omega_k \in \zz_{\geq 0} \langle \omega_0, \ldots, \omega_{k-1} \rangle$, $1 \leq k \leq n$.
\end{enumerate}
Finally, $\vec \omega$ is called {\em essential} if $\alpha_k \geq 2$ for $1 \leq k \leq n$. Note that
\begin{enumerate}[label=(\alph{enumi})]
\item Given an arbitrary key sequence $(\omega_0, \ldots, \omega_{n+1})$, it has an associated {\em essential subsequence} $(\omega_0, \omega_{i_1}, \ldots, \omega_{i_l}, \omega_{n+1})$ where $\{i_j\}$ is the collection of all $k$, $1 \leq k \leq n$, such that $\alpha_k \geq 2$.
\item If $\vec \omega$ is an algebraic key sequence, then its essential subsequence is also algebraic.
\end{enumerate} 
\end{defn}

\begin{rem}\label{unique-remark}
Let $\vec\omega := (\omega_0, \ldots, \omega_{n+1})$ be a key sequence. It is straightforward to see that property \ref{smaller-property} implies the following: for each $k$, $1 \leq k \leq n$, $\alpha_k\omega_k$ can be {\em uniquely} expressed in the form $\alpha_k\omega_k = \beta_{k,0}\omega_0 + \beta_{k,1} \omega_1 + \cdots + \beta_{k,k-1}\omega_{k-1}$, where $\beta_{k,j}$'s are integers such that $0 \leq \beta_{k,j} < \alpha_j$ for all $j \geq 1$. If $\vec \omega$ is in additional {\em algebraic}, then $\beta_{k,0}$'s of the preceding sentence are {\em non-negative}.
\end{rem}

\begin{defn}[Key forms]\label{key-form}
Let $\vec \omega := (\omega_0, \ldots, \omega_{n+1})$ be a key sequence and $\vec \theta := (\theta_1, \ldots, \theta_n) \in \ntorus$. The corresponding {\em key forms} are elements $g_0, \ldots, g_{n+1}$ in $\cc[x,x^{-1},y]$ defined as follows: $g_0 := x$, $g_1 := y$, and
\begin{align*}
g_{k+1} = g_k^{\alpha_k} - \theta_k g_0^{\beta_{k,0}} \cdots g_{k-1}^{\beta_{k,k-1}},\quad 1 \leq k \leq n,
\end{align*}
where $\alpha_k$'s and $\beta_{k,j}$'s are as in Remark \ref{unique-remark}. 
\end{defn}

Key forms are clear-cut analogues of the notion of {\em key polynomials} corresponding to discrete valuations introduced by MacLane \cite{maclane-key}. It follows from the standard theory of discrete valuations (as developed in e.g.\ \cite[Chapters 2, 4]{favsson-tree}) that there are pairwise correspondences among the following families:
\begin{enumerate}[label=(\Alph{enumi})]
\item \label{omega-theta} Pairs $(\vec \omega, \vec \theta)$ where $\vec \omega := (\omega_0, \ldots, \omega_{n+1})$ is a key sequence and $\vec \theta := (\theta_1, \ldots, \theta_n) \in \ntorus$. 
\item \label{delta} Divisorial semidegrees $\delta$ on $\cc[x,y]$ such that $\delta(x) > 0$.
\item \label{phi-r} Pairs $(\phi,r)$ where $\phi$ is a {\em descending Puiseux polynomial}, i.e.\ an element of $\dpsxc$ with finitely many terms, $r \in \qq$, $r < \ord_x(\phi)$.
\end{enumerate}
The correspondence between \ref{delta} and \ref{phi-r} is given by \cref{phi-delta}. The mapping $(\omega, \theta) \mapsto \deltaomegatheta$ defined in \cref{omega-theta-to-delta} below gives the correspondence \ref{omega-theta} $\to$ \ref{delta}. 

\begin{thm} \label{omega-theta-to-delta}
Let $(\vec \omega, \vec \theta)$ be as in \ref{omega-theta}, $y_1, \ldots, y_{n+1}$ be indeterminates and $\omega$ be the {\em weighted degree} on $S := \cc[x,x^{-1},y_1, \ldots, y_{n+1}]$ corresponding to weights $\omega_0$ for $x$ and $\omega_j$ for $y_j$, $0 \leq j \leq n+1$ (i.e.\ the value of $\omega$ on a polynomial is the maximum `weight' of its monomials). For every polynomial $f \in \cc[x,x^{-1},y]$, define
\begin{align}
\delta(f) := \min\{\omega(F): F(x,y_1, \ldots, y_{n+1}) \in S,\ F(g_0,g_1, \ldots, g_{n+1}) = f\}. \label{generating-eqn}
\end{align}
where $g_0, \ldots, g_{n+1}$ are key forms corresponding to $(\vec \omega, \vec \theta)$. Then $\delta$ is a divisorial semidegree on $\cc[x,y]$ such that $\delta(g_k) = \omega_k$, $0 \leq k \leq n+1$; in particular, $\delta(x) = \omega_0 > 0$. 
\end{thm}

\begin{proof}
Follows from \cite[Theorem 3.17]{algebraicity}. 
\end{proof}

Combining \cref{omega-theta-to-delta} with \cref{phi-delta} gives the correspondence \ref{omega-theta} $\to$ \ref{phi-r}. The correspondence \ref{phi-r} $\to$
 \ref{omega-theta} is described in detail in \cite[Algorithm 5.1]{algebraicity}; we summarize it below. 
 
 \begin{algorithm}[Construction of $(\vec \omega, \vec \theta)$ and the key forms from $(\phi,r)$]
 \label{phi-omega-algorithm} 
 Define $\tilde \phi(x, \xi) := \phi(x) + \xi x^r$. Set $\omega_0$ to be the polydromy order (\cref{generic-descending-defn}) of $\tilde \phi$, $\omega_1 := \omega_0\deg_x(\tilde \phi)$, $g_0 := x$, $g_1 := y$. We construct the rest of the elements inductively: assume $\omega_0, \ldots, \omega_k$, $\theta_1, \ldots, \theta_{k-1}$, $g_0, \ldots, g_k$ have been constructed for some $k \geq 1$ such that $\deg_x\left(g_{j}|_{y = \tilde \phi}\right) = \omega_j/\omega_0$, $0 \leq j \leq k$. Let $\tilde a_k$ be the coefficient of $x^{\omega_k/\omega_0}$ in $g_k|_{y = \tilde \phi}$. If $\tilde a_k\in \cc[\xi]\setminus \cc$, then stop. Otherwise there exist unique integers $\beta_{k,0}, \ldots, \beta_{k,k-1}$ as in \cref{unique-remark} and a unique $\theta_k \in \cc^*$ such that with $g_{k+1} := g_k^{\alpha_k} - \theta_k g_0^{\beta_{k,0}} \cdots g_{k-1}^{\beta_{k,k-1}}$, we have $\tilde \omega_{k+1} := \deg_x\left(g_{k+1}|_{y = \tilde \phi}\right) < \alpha_k \omega_k/\omega_0$, where $\alpha_k$ is defined as in \cref{key-seqn}. Define $\omega_{k+1}  := \omega_0\tilde \omega_{k+1}$, and repeat the process.
 \end{algorithm}
 
 \begin{example} 
In the table below each pair $p_j,q_j$ are relatively prime integers with $p_j > 0$:
 \begin{align*}
 \begin{array}{|p{2.3cm}|lBp{2.5cm}|l|p{3.2cm}|}
 \hline
 $\phi(x)$ & r & $\vec \omega$ & \vec \theta & \text{key forms} \\
 \hline 
 $0$ & q_1/p_1 &$(p_1, q_1)$ & \text{--} & $x, y$ \\
 \hline
 $ax^{q_1/p_1}$, $c \in \cc^* $ & q_2/(p_1p_2) & $(p_1p_2, q_1p_2,$ \newline $(p_1-1)q_1p_2 + q_2) $ & a^{p_1}
	 &$ x, y, y^{p_1} - a^{p_1}x^{q_1}$ \\
 \hline 
 $a_1x^{2/3} + a_2x^{-1},$  $a_1, a_2 \in \cc^*$ & -2 & $(3, 2, 1, -2)$ & (a_1^3, 3a_2)
	 &  $x, y,  y^3 - a_1^3x^2$, \newline $y^3 - a_1^3x^2 - 3a_2x^{-1}y^2$\\
\hline
 \end{array}
  \end{align*}
 \end{example} 
 
\begin{rem}\label{approximate-remark}
{\em $\delta$-sequences} of plane curves with one place at infinity are special cases of key sequences; e.g.\ the definition of $\delta$-sequences in \cite[p.\ 1116]{sathaye} is equivalent to the following definition: a sequence $\vec\omega := (\omega_0, \ldots, \omega_{n+1})$, $n \in \zz_{\geq 0}$, of integers is a $\delta$-sequence if it is a primitive algebraic key sequence such that 
\begin{itemize}
\item 
$\alpha_k \geq 2$ for $k = 2, \ldots, n$. In other words, the essential subsequence of $\vec \omega$ is either $\vec \omega$ or $(\omega_0, \omega_2, \omega_3, \ldots, \omega_{n+1})$. 
\item $\alpha_{n+1}\omega_{n+1} \in \zz_{\geq 0} \langle \omega_0, \ldots, \omega_n \rangle$, i.e.\ the identity from property \eqref{algebraic-semigroup-property} of primitive algebraic key sequences holds for $k = n+1$ as well. 
\end{itemize} 
Geometrically the relation between $\delta$-sequences of plane curves and key sequences of semidegrees can be seen in the following way: let $C = V(f) \subseteq \cc^2$ be a curve with one place at infinity, where $f \in \cc[x,y]$ is an irreducible polynomial. Let $C_\xi := V(f - \xi) \subseteq \cc^2$ for each $\xi \in \cc$. It is a classical result of Moh \cite{moh-analytic-irreducibility} that each $C_\xi$ has one place at infinity, call it $P_\xi$. Let $\delta_C$ be the semidegree on $\cc[x,y]$ that assigns to each $h \in \cc[x,y]$ the order of pole at $P_\xi$ of $h|_{C_\xi}$ for generic $\xi$. Let $\vec\omega := (\omega_0, \ldots, \omega_{n+1})$ be the key sequence of $\delta_C$ and $\vec \omega_e := (\omega_{i_0}, \ldots, \omega_{i_{l+1}})$ be its essential subsequence, where $0 = i_0 < \cdots < i_{l+1} = n+1$. Then $\omega_{i_{l+1}} = 0$, and the $\delta$-sequence of $C$ in $(x,y)$-coordinates is:
\begin{align}
\vec d 
	:=	\begin{cases}
			(\omega_{i_0}, \ldots, \omega_{i_l}) &\text{if}\ i_1 = 1,\\
			(\omega_{i_0}, \omega_1, \omega_{i_2}, \ldots, \omega_{i_l}) &\text{if}\ i_1 > 1.
		 \end{cases} \label{delta-from-key}
\end{align}
\end{rem}

\section{Basic structure of primitive compactifications of $\cc^2$} \label{structure-section}
\begin{notation}\label{omega-theta-defn}
Let $(x,y)$ be a fixed system of coordinates on $X := \cc^2$. Let $\vec \omega := (\omega_0, \ldots, \omega_{n+1})$ be a key sequence, $\vec \theta := (\theta_1, \ldots, \theta_n) \in \ntorus$, and $\deltaomegatheta$ be the associated semidegree on $\cc[x,y]$ as in \cref{omega-theta-to-delta}. In case $\deltaomegatheta$ is primitive (\cref{determining-defn}), we write $\xomegatheta$ for the corresponding primitive compactification of $X$.
\end{notation}

\begin{thm} \label{key-primitive-correspondence}
\mbox{}
\begin{enumerate}
\item $\deltaomegatheta$ is primitive iff $\vec \omega$ is primitive.
\item Assume $\deltaomegatheta$ is primitive. Then $\xomegatheta$ is algebraic iff $\vec \omega$ is algebraic.  
\end{enumerate}
\end{thm}

\begin{proof}
The first assertion is a reformulation of \cite[Corollary 6.3]{sub2-1}. The second assertion follows from \cite[Theorem 4.1]{algebraicity}. 
\end{proof}

\cite[Theorem 4.5]{sub2-1} gives an explicit description of dual graphs of minimal resolutions of singularities of primitive compactifications of $\cc^2$. \Cref{general-structure-thm} below collects some information about the singularities of primitive compactifications and the curves on infinity on these surfaces; it follows in a straightforward manner from an examination of these dual graphs. 

\begin{prop} \label{general-structure-thm}
Let $\vec\omega:= (\omega_0, \ldots, \omega_{n+1})$ be a primitive key sequence and $\vec \theta \in \ntorus$. Let $\bar X:= \xomegatheta$, $C_\infty$ be the curve at infinity on $\bar X$, and $P_\infty \in C_\infty$ be as in \cref{main-curve-prop}. 
\begin{enumerate}
\item $\bar X$ has at most two singular points. 
\begin{enumerate}
\item \label{case-projective} If $\bar X$ is isomorphic to a weighted projective surface of the form $\pp^2(1, p, q)$, with $1 \leq p \leq q$, then 
\begin{enumerate}
\item If $p = q$, then $\bar X \cong \pp^2$ and therefore $\bar X$ is nonsingular.
\item Otherwise let $p' := p/\gcd(p,q)$ and $q' := q/\gcd(p,q)$. If $p' = 1$, then $\bar X$ has only one singular point, which is a cyclic quotient singularity\footnote{Let $P$ be an isolated singular point on a surface $U$, and $a,b,c$ be positive integers. Then $U$ is said to have a {\em cyclic quotient singularity} at $P$ of {\em type $\frac{1}{a}(b,c)$} iff $P$ has a neighborhood in $U$ isomorphic to a neighborhood of the origin in the quotient of $\cc^2$ under the action of the group of $a$-th roots of unity given by $\zeta \cdot (u,v) = (\zeta^b u, \zeta^c v)$, where $\zeta$ is a primitive $a$-th root of unity.} of type $\frac{1}{q'}(1,1)$.
\item Otherwise $\bar X$ has two singular points, both cyclic quotient singularities, one of type $\frac{1}{q'}(1,p')$ and the other of type $\frac{1}{p'}(1,q')$. 
\end{enumerate}
\item Otherwise $\bar X$ has a non-cyclic quotient singularity at $P_\infty$. Let $ \omega' := \gcd(\omega_0,\ldots, \omega_n)$. 
\begin{enumerate}
\item If $\omega' = 1$, then $\bar X$ has no other singular points, i.e.\ $\sing(\bar X) = \{P_\infty\}$. 
\item Otherwise $\bar X$ has precisely one singular point $P_0$ other than $P_\infty$. $P_0$ is a cyclic quotient singularity of type $\frac{1}{\omega'}(1,\omega_{n+1})$. 
\end{enumerate}
\end{enumerate} 
\item \label{non-singinfinity} $C_\infty$ is non-singular off $P_\infty$. In particular $C_\infty \setminus P_\infty \cong \cc$. Moreover, in case \eqref{case-projective}, $C_\infty$ is non-singular, i.e.\ $C_\infty \cong \pp^1$. \qed
\end{enumerate}
\end{prop}

Our next result is about embeddings of primitive compactifications $\xomegatheta$ into projective spaces. If $\xomegatheta$ is not algebraic, then of course such embeddings do not exist. However, $\xomegatheta \setminus \{P_\infty\}$ (where $P_\infty$ is as in \cref{general-structure-thm}) remains a quasi-projective variety (since $\xomegatheta \setminus \{P_\infty\}$ is either non-singular or has only one rational singular point, and therefore is quasi-projective due to the algebraicity criterion of Artin \cite{artractability}). In particular, $\xomegatheta\setminus V(x)$ (where $V(x)$ is the closure of the $y$-axis) is quasi-projective (since \cref{main-curve-prop} implies that $P_\infty \in  V(x)$). The first part of the next proposition shows that $\xomegatheta\setminus V(x)$ is in fact an affine surface, with a closed embedding defined by the key forms of $\deltaomegatheta$. In the case that $\xomegatheta$ is algebraic, the embedding extends to all of $\xomegatheta$, and realizes it as a weighted complete intersection in a weighted projective space, whose defining equations can be explicitly described. 

\begin{prop}\label{projective-embedding}
Let $\vec\omega:= (\omega_0, \ldots, \omega_{n+1})$ be a key sequence and $\vec \theta \in \ntorus$. Let $g_0, \ldots, g_{n+1}$ be the key forms associated to $(\vec\omega,\vec \theta)$ and $\WP$ be the weighted projective space $\pp^{n+2}(1, \omega_0, \omega_1, \ldots, \omega_{n+1})$ with (weighted) homogeneous coordinates $[w:y_0: \cdots :y_{n+1}]$. 
\begin{enumerate}
\item \label{general-assertion} The map $\iomegatheta: X\setminus V(x) \into \WP\setminus V(y_0)$ given by
\begin{align*}
(x,y) \mapsto [1: g_0(x,y): g_1(x,y): \cdots : g_{n+1}(x,y)]
\end{align*}
induces an open embedding of $X \setminus V(x)$ into the closure $Y$ in $\WP\setminus V(y_0)$ of its image. The complement $C$ of $X\setminus V(x)$ in $Y$ is isomorphic to $\cc$ and the order of pole along $C$ is precisely $\deltaomegatheta$. If $\vec \omega$ is primitive, then $Y \cong \xomegatheta \setminus V(x)$.
\item If $\vec\omega$ is primitive algebraic, then  $\iomegatheta$ induces an isomorphism of $\xomegatheta$ with the subvariety of $\WP$ defined by weighted homogeneous polynomials $G_k$, $1 \leq k \leq n$, given by
\begin{gather} \label{projective-definition}
G_k := w^{\alpha_k\omega_k - \omega_{k+1}}y_{k+1} - \left( y_k^{\alpha_k} - \theta_k \prod_{j=0}^{k-1}y_j^{\beta_{k,j}}\right)
\end{gather}
where $\alpha_k$'s and $\beta_{k,j}$'s are as in Remark \ref{unique-remark}. 
\end{enumerate}

\end{prop}

\begin{proof}
The first assertion is a consequence of \cref{basis-lemma}, and the second of \cite[Theorem 4.1]{algebraicity}. 
\end{proof}

The curve at infinity on primitive algebraic compactifications can be completely described as well: 

\begin{cor} \label{projllary}
Assume $\vec \omega$ is primitive algebraic. Identify $\xomegatheta$ with the subvariety of $\WP$ from \cref{projective-embedding}. Let $C_\infty := \xomegatheta \setminus X$ be the curve at infinity and $P_\infty$ (resp.\ $P_0$) be the point on $C_\infty$ with coordinates $[0: \cdots :0:1]$ (resp.\ $[0:1: \bar \theta_1: \cdots : \bar \theta_n : 0]$, where $\bar \theta_k$ is an $\alpha_k$-th root of $\theta_k$, $1 \leq k \leq n$). Then
\begin{enumerate}
\item \label{curve-1} Let $S$ be the subsemigroup of $\zz^2$ generated by $\{(\omega_k,0): 0 \leq k \leq n\} \cup \{(0,\omega_{n+1})\}$. Then $C_\infty \cong \proj \cc[S]$, where $\cc[S]$ is the semigroup algebra generated by $S$, and the grading in $\cc[S]$ is induced by the sum of coordinates of elements in $S$.  
\item \label{curve-singularity} Let $\tilde S:= \zz_{\geq 0} \langle \alpha_{n+1}\omega_{n+1} \rangle \cap \zz_{\geq 0} \langle \omega_0, \ldots, \omega_n \rangle$. Then $\cc[C_\infty \setminus P_0] \cong \cc[\tilde S]$,  In particular,
\begin{enumerate}
\item  $C_\infty$ has at worst a (non-normal) {\em toric} singularity at $P_\infty$;
\item \label{curve-non-singularity} $C_\infty$ is non-singular iff $\alpha_{n+1}\omega_{n+1} \in \zz_{\geq 0} \langle \omega_0, \ldots, \omega_n \rangle$, i.e.\ iff the essential subsequence of $\vec \omega$ is a {\em $\delta$-sequence} (\cref{approximate-remark}).  
\end{enumerate}

\end{enumerate}
\end{cor} 

\begin{rem} \label{curve-remark}
\mbox{}
\begin{defnlist}
\item $P_0$ and $P_\infty$ of \cref{projllary} are the same as $P_0$ and $P_\infty$ from \cref{general-structure-thm}. 
\item \label{symmetric-remark} Assertion \eqref{curve-non-singularity} of \cref{projllary}, assertion \eqref{delta-basis} of \cref{basis-lemma} and \cite[Proposition 2.1]{herzog} implies that if $C_\infty$ is non-singular, then the semigroup of poles of polynomials along $C_\infty$ is {\em symmetric} (see \cref{symmetric-footnote} for the definition of symmetric semigroups). 
\end{defnlist}
\end{rem}

\begin{proof}[Proof of \cref{projllary}]
Since $C_\infty = V(w) \cap \xomegatheta$, \cref{projective-embedding} implies that 
\begin{align*}
C_\infty &\cong \proj \cc[w, y_0, \ldots, y_{n+1}]/\langle w, \bar G_1, \ldots, \bar G_n \rangle \cong \proj \cc[y_0, \ldots, y_{n+1}]/\langle \bar G_1, \ldots, \bar G_n \rangle,
\end{align*}
where $\bar G_k := y_k^{\alpha_k} - \theta_k \prod_{j=0}^{k-1}y_j^{\beta_{k,j}}$, $1 \leq k \leq n$. \cite[Lemma B.1]{algebraicity} then implies that there is an isomorphism of graded $\cc$-algebras of the form
\begin{align*}
\cc[y_0, \ldots, y_{n+1}]/\langle \bar G_1, \ldots, \bar G_n \rangle \cong \cc[t^{\omega_0}, \ldots, t^{\omega_n}, y_{n+1}],
\end{align*} 	
where $t$ is an indeterminate and the grading on the right hand side is given by assigning the degrees of $t$ and $y_{n+1}$ to be respectively $1$ and $\omega_{n+1}$. Moreover, the isomorphism maps $y_k \mapsto t^{\omega_k}$ for $0 \leq k \leq n$, and $y_{n+1} \mapsto y_{n+1}$. This immediately implies assertion \eqref{curve-1}. Moreover, since $P_0 = C_\infty \cap V(y_{n+1})$, it follows that 
\begin{align*}
\cc[C_\infty \setminus P_0] \cong \cc\left[\frac{t^{\sum \beta_k \omega_k}}{y_{n+1}^{\beta_{n+1}}}: \beta_k \geq 0\ \text{for all}\ k,\ \sum_{k=0}^n \beta_k \omega_k = \beta_{n+1}\omega_{n+1}\right].
\end{align*}
Assertion \ref{curve-singularity} now follows from the definition of $\alpha_{n+1}$.
\end{proof}

The {\em index} of a $\qq$-Cartier divisor $D$ is the smallest positive integer $m$ such that $mD$ is Cartier. The following result describes the index of the divisor at infinity on a primitive algebraic compactification. 

\begin{prop} \label{index-prop}
Let $\vec \omega$ and $C_\infty$ be as in \cref{projllary}. Then the index of $[C_\infty]$ is $\alpha_{n+1}\omega_{n+1}$, where $\alpha_{n+1} := \gcd(\omega_0, \ldots, \omega_n)$ and $[C_\infty]$ is the Weil divisor corresponding to $C_\infty$. 
\end{prop}

\begin{proof}
We have to show that $m[C_\infty]$ is a Cartier divisor iff $m$ is divisible by $\alpha_{n+1}\omega_{n+1}$. At first we show the ($\Leftarrow$) implication. Since $\alpha_{n+1} = \gcd(\omega_0,\ldots, \omega_n)$, it follows that $|m|  = k\omega_{n+1} = m_0\omega_0 - \sum_{j=1}^n m_j\omega_j$ for non-negative integers $k, m_0, \ldots, m_n$. Consider the set up of \cref{projllary}. Then it is straightforward to see that $1/g_{n+1}^k$ and $(\prod_{j=1}^n g_j^{m_j})/g_0^{m_0}$ defines the Cartier divisor $|m|[C_\infty]$ respectively near $P_\infty$ and $P_0$. Since $\xomegatheta \setminus \{P_0, P_\infty\}$ is non-singular, this complete the proof of ($\Leftarrow$) direction.\\

Now we prove the ($\im$) implication. Let $h_\infty = h_{\infty,1}/h_{\infty,2}$ define $m[C_\infty]$ near $P_\infty$, with $h_{\infty,1}, h_{\infty,2} \in \cc[x,y]$. Then for each $i$, the closure in $\bar X$ of the curve on $X$ defined by $h_{\infty, i} = 0$ does {\em not} go through $P_\infty$. \cref{main-curve-prop} and assertion \eqref{last-factorization} of \cref{last-curve-prop} then imply that $\delta(h_{\infty, i})$ is a multiple of $\omega_{n+1}$ for each $i$, where $\delta$ is the order of pole along $C_\infty$. It follows that $\omega_{n+1}$ divides $m= \delta(h_{\infty,2}) - \delta(h_{\infty,1})$. Now assume $h_0 = h_{01}/h_{02}$ defines $m[C_\infty]$ near $P_0$, with $h_{01}, h_{02} \in \cc[x,y]$. Fix $i$, $1 \leq i \leq 2$. Let $h_{0i} = \sum a_{\beta} g_0^{\beta_0} g_1^{\beta_1} \cdots g_{n+1}^{\beta_{n+1}}$ be the expansion of $h_{0i}$ in terms of the basis $\scrB$ of \cref{basis-lemma}. Since $P_0$ is not on the closure on $\bar X$ of the curve $h_{0i}=0$, it follows that among all $\beta$ such that $a_\beta \neq 0$ and $\sum \omega_j\beta_j = \delta(h_{0i})$, there must be some $\beta$ such that $\beta_{n+1} = 0$. Since $\alpha_{n+1}$ divides $\omega_j$ for every $j$, $0 \leq j \leq n$, it follows that $\alpha_{n+1}$ divides $\delta(h_{0i})$. Consequently, $\alpha_{n+1}$ also divides $m= \delta(h_{02}) - \delta(h_{01})$. Since $\gcd(\alpha_{n+1}, \omega_{n+1}) = 1$ (by definition of key sequences), it follows that $\alpha_{n+1}\omega_{n+1}$ divides $m$, as required.
\end{proof}

\section{Normal forms of key sequences} \label{normal-section}

In this section we introduce normal forms of key sequences associated with divisorial semidegrees, and state their basic properties. The proofs of these properties are in \cref{normal-proof-1,normal-proof-2}. We start with some examples which motivate the definition of normal forms. 

\subsection{Motivation and definition of normal forms}

\begin{example} \label{exchange-example}
Let $\vec \omega = (p,q)$ for some $p,q \in \zz_{> 0}$. Then $\delta := \deltaomegatheta$ is the weighted degree on $\cc[x,y]$ corresponding to weights $p,q$ for $x$ and $y$ respectively, and the associated generic descending Puiseux series is $\tilde \phi(x,\xi) = \xi x^{q/p}$. 
After the change of coordinate $(x,y) \mapsto (y,x)$, the  generic descending Puiseux series of $\delta$ becomes $\xi x^{p/q}$ and the key sequence becomes $(q,p)$.  
\end{example}

\begin{example} \label{change-example-0}
Let $\delta$ be the semidegree on $\cc[x,y]$ defined by $\delta(f) := 5\deg_x(f(x,\tilde \phi))$, where $$\tilde \phi:= x^{5} +2x^{4} + 3x^{3/2} + \xi x^{-1}$$
\Cref{phi-omega-algorithm} shows that the key forms of $\delta$ are $x$, $y$, $y - x^5$, $y - x^5 - 2x^4$,  $(y - x^5 - 2x^4)^2 - 9x^3$, and the key sequence is $\vec \omega := (2,10, 8,3,1)$. However, after the change of coordinate $(x,y) \mapsto (x,y- x^5-2x^4)$, the generic descending Puiseux series of $\delta$ becomes $3x^{3/2} + \xi x^{-1}$ with key forms $x,y, y^2 - 9x^3$ and key sequence $(2, 3, 1)$. 
\end{example}

\begin{example} \label{change-example-1}
Let $\delta$ be the semidegree on $\cc[x,y]$ defined by $\delta(f) := 5\deg_x(f(x,\tilde \phi))$, where $$\tilde \phi:= x^{3/5} +2x^{2/5} + 3x^{1/5} + \xi$$
\Cref{phi-omega-algorithm} shows that the key forms of $\delta$ are $x$, $y$, $y^5 - x^3$, $y^5- x^3 - 10xy^3$,  $y^5- x^3 - 10xy^3 + 5x^2y$, and the key sequence is $\vec \omega := (5,3,14,13,12)$. The table below shows the result of substitution $y = \tilde \phi(x)$ into the key forms; the term $\ldt$ denotes terms with lower degree in $x$. 
\begin{align*}
\begin{array}{|l|l|}
\hline
g(x,y) & g(x,y)|_ {y = \tilde \phi}\\
\hline 
y &  x^{3/5} +2x^{2/5} + 3x^{1/5} + \xi\\
y^5 - x^3 & 10x^{14/5} + 55x^{13/5} + 5\xi x^{12/5} + \ldt \\
y^5 - x^3 - 10xy^3 & -5x^{13/5} + 5\xi x^{12/5} + \ldt \\
y^5 - x^3 - 10xy^3 + 5x^2y &5\xi x^{12/5} + \ldt \\
\hline
\end{array}
\end{align*}
Now consider a change of coordinates of the form $(x,y) \mapsto (x-sy, y)$, $s \in \cc$. It is not hard to see that this converts the generic descending Puiseux series into 
\begin{align*}
\tilde \phi_s
	&= x^{3/5} +2x^{2/5} + (3 + 3s/5)x^{1/5} + \xi
\end{align*}
It is straightforward to see that the terms with highest $x$-degree in $y|_{y = \tilde \phi_s}$ and $(y^5 - x^3)|_{y = \tilde \phi_s}$ are the same as those in $y|_{y = \tilde \phi}$ and $(y^5 - x^3)|_{y= \tilde \phi}$ respectively, but the term with highest $x$-degree in $(y^5 - x^3 - 10xy^3)|_{y = \tilde \phi_s}$ is $(-5 + 3s)x^{13/5}$. \Cref{phi-omega-algorithm} then implies that the key forms and the key sequence of $\delta$ in the new coordinates behave differently depending on whether $-5 + 3s$ is zero or not. More precisely, 
\begin{itemize}
\item If $s \neq 5/3$, then the key forms of $\delta$ in the new coordinates are $x$, $y$, $y^5 - x^3$, $y^5- x^3 - 10xy^3$,  $y^5- x^3 - 10xy^3 -(-5 + 3s)x^2y$ and the key sequence is $(5,3,14,13,12)$. 
\item If $s = 5/3$, then the key forms of $\delta$ in the new coordinates are $x$, $y$, $y^5 - x^3$, $y^5- x^3 - 10xy^3$, and the key sequence is $(5,3,14,12)$.
\end{itemize}
\end{example}

Let $\delta$ be a divisorial semidegree on $\cc[x,y]$ with associated key sequence $(\omega_0, \ldots, \omega_{n+1})$. \Cref{exchange-example} shows that after a change of coordinate we may assume that
\begin{prooflist} 
\item \label{0>=1}  $\omega_0 \geq \omega_1$.
\end{prooflist}
Moreover \cref{change-example-0} suggests (and the discussion in \cref{normal-proof-1} shows) that 
\begin{prooflist} [resume]
\item \label{non-multiple}  if $n \geq 1$, then it is possible to ensure that neither of $\omega_0$ and $\omega_1$ is a (non-negative) integer multiple of the other.
\end{prooflist}
Finally, \cref{change-example-1} suggests that sometimes it is possible to kill off some interior terms of the key sequence without changing $\omega_0$ or $\omega_1$; we now precisely describe those terms which can be eliminated in this way.\\

Let $\vec \omega := (\omega_0, \ldots, \omega_{n+1})$ be a key sequence, and $\vec \omega_e := (\omega_{i_0}, \ldots, \omega_{i_{l+1}})$, where $0 = i_0 < i_1 < \cdots < i_{l+1} = n+1$, be the essential subsequence of $\vec \omega$. Let $\vec \theta \in \ntorus$ and $\deltaomegatheta$ be the semidegree associated to $(\vec \omega, \vec \theta)$. Identity \eqref{omega-and-chi-0} implies that the formal characteristic exponents of the generic descending Puiseux series $\tildephiomegatheta$ of $\deltaomegatheta$ are
\begin{align}
\chi_j := \frac{1}{\omega_0}(\omega_{i_j} - \sum_{k = 1}^{j-1}(\alpha_{i_k} - 1)\omega_{i_k}),
							\quad 1 \leq j \leq l+1. \label{beta-i-j}
\end{align}
where $\alpha_1, \ldots, \alpha_{n+1}$ are as in \cref{key-seqn}. Let 
\begin{align}
\scrE_{\vec \omega} 
	:=
	\begin{cases}
	 \{k\frac{\omega_1}{\omega_0}-1: k \in \zz,\  \max\{0, (\chi_{l+1}+1)\frac{\omega_0}{\omega_1}\} < k < \frac{\omega_0}{\omega_1} + 1\} \cup\{0\}
		 & \text{if}\ \omega_1 > 0,\\
	 \{k\frac{\omega_1}{\omega_0}-1: k \in \zz,\  0 < k < (\chi_{l+1}+1)\frac{\omega_0}{\omega_1}\}
	 		 & \text{if}\ \omega_1 < 0. 
	\end{cases} \label{exponents-omega}
\end{align}
We show in \cref{normal-proof-1} that 
\begin{prooflist}[resume]
\item Under conditions \ref{0>=1} and \ref{non-multiple}, $\scrE_{\vec \omega}$ consists of precisely those $\beta$ such that the coefficient $c_\beta$ of $x^\beta$ in $\tildephiomegatheta$ can be independently varied by changes of coordinates of $\cc[x,y]$ without changing $\omega_0$ or $\omega_1$.\\
\end{prooflist}


Let $\beta \in \qq$. We now describe the effect of changing $c_\beta$ on the key sequence and key forms. Let
\begin{align}
\hat k(\beta) 
	&:= 
	\begin{cases}
		0 &\text{if}\ \beta \geq \chi_1, \\
		\max\{k: 1 \leq k \leq l+1,\ \beta <  \chi_k\} & \text{otherwise}.
	\end{cases} 
	\label{hat-k-beta} \\
\hat \omega_\beta	
	&:= \omega_0 \beta + \sum_{j=1}^{\hat k(\beta)} (\alpha_{i_j}-1)\omega_{i_j} 
	\label{hat-omega-beta} \\
\hat I_\beta 
	&= 
	\begin{cases}
	\{i: i_{\hat k(\beta)} < i < i_{\hat k(\beta) + 1}\} 
		& \text{if}\ \hat k(\beta) \leq l\\
	\emptyset 
		&\text{if}\ \hat  k(\beta) = l+1.
	\end{cases}	
\end{align}
Note that $\hat \omega_{\chi_j} = \omega_{i_j}$, $1 \leq j \leq l+1$. \Cref{c-thm} gives the following interpretation for $\hat \omega_\beta$ and $\hat I_\beta$: pick $\beta$ distinct from each of the $\chi_j$'s. For each $c \in \cc$, let $\tilde \psi_c := \tildephiomegatheta + cx^\beta$ and $\delta_c$ be the corresponding semidegree. 
%
Pick the largest integer $\hat i_\beta$ such that the key forms $g_0, \ldots, g_{\hat i_\beta}$ remain unchanged for all values of $c$. Then $\hat i_\beta \in \hat I_\beta$ and $\hat \omega_\beta = \delta_c(g_{\hat i_\beta})$ for generic $c \in \cc$.

\begin{example}[\Cref{change-example-1} continued]
In the situation of \cref{change-example-1}, $\hat i_{1/5} = 3$, $\hat I_\beta = \{2,3\}$, $\hat \omega_{1/5} = 13$. 
\end{example} 
If $\hat \omega_\beta = \omega_{\hat i_\beta}$, then \cref{c-thm} implies that
\begin{prooflist}[resume]
\item \label{dropping-key} as in \cref{change-example-1}, it is possible to ensure that $\omega_{\hat i_\beta}$ is less than $\hat \omega_\beta$ by changing the value of $c_\beta$ while keeping fixed all $c_{\beta'}$ with $\beta' > \beta$. Moreover, it turns out that the property in the preceding sentence holds iff $\omega_i \neq \hat \omega_\beta$ for each $i \in \hat I_\beta$ (\cref{c-lemma}).\\
\end{prooflist}

Observations \ref{0>=1}--\ref{dropping-key} suggest the following definition of normal forms:
\begin{defn} \label{normal-key-defn}
We say that a key sequence $\vec \omega = (\omega_0, \ldots, \omega_{n+1})$ is in the {\em normal form} if it satisfies one of the following (mutually exclusive) conditions: 
\begin{enumerate}[label = (N\arabic{enumi}), ref=N\arabic{enumi}]
\setcounter{enumi}{-1}

\item \label{trivially-normal} 
\begin{enumerate} 
\item\label{trivially-zero-length} $n = 0$.
\item \label{trivially-omega} $\omega_0 \geq \omega_1$. 
\end{enumerate}
\item \label{generally-normal}
\begin{enumerate}
\item \label{generaly->=1} $n \geq 1$.
\item \label{generally-less} $\omega_0 > \omega_1$.
\item \label{generally-alpha} $\frac{\omega_1}{\omega_0} \not\in \{\frac{1}{k}: k \in \zz,\ k \geq 1\} \cup \{0\}$. 
\item \label{generally-omega} For each $\beta \in \scrE_{\vec \omega}$, there does not exist $i \in \hat I_\beta$ such that $\omega_i = \hat \omega_\beta$. 
\end{enumerate}
\end{enumerate}
\end{defn}

\subsection{Basic properties of normal forms}
The results in this section state the fundamental properties of normal forms. The proofs of \cref{normal-thm-0,normal-morphism} are in \cref{normal-proof-1} and the proof of \cref{preserving-thm} is in \cref{normal-proof-2}. 

\begin{thm} \label{normal-thm-0}
Let $X := \cc^2$ and $\delta$ be a divisorial semidegree on $\cc[X]$. Then 
\begin{enumerate}
\item \label{existence-assertion} There exist polynomial coordinates $(x,y)$ on $X$ such that $\delta(x) > 0$ and the key sequence $\vec \omega$  of $\delta$ with respect to $(x,y)$-coordinates is in the normal form. 
\item \label{unique-assertion} If $(x',y')$ is another system of coordinates on $\cc[X]$ such that $\delta(x') > 0$ and  the key sequence $\vec \omega'$  of $\delta$ with respect to $(x',y')$-coordinates is in the normal form, then $\vec \omega' = \vec \omega$. 
\end{enumerate}
\end{thm}


Let $X := \cc^2$ and $\delta$ be a divisorial semidegree on $\cc[X]$. Pick a system of coordinates $(x,y)$ on $X$ such that $\delta(x)  > 0$ and the corresponding  key sequence $\vec \omega := (\omega_0, \ldots, \omega_{n+1})$ of $\delta$ is in normal form. The following theorem describes the changes of coordinates on $X$ which preserves the key sequence of $\delta$. Let $F: = (F_1, F_2): X \to X$ be an isomorphism. Set $(x',y') := (F_1(x,y), F_2(x,y))$. Assume $\delta(x') > 0$. Let $\vec \omega'$ be the key sequence of $\delta$ with respect to $(x',y')$-coordinates.  We write $\tilde \phi_\delta(x,\xi)$ and $\tilde \psi_\delta(x',\xi)$ for the generic descending Puiseux series of $\delta$ respectively in $(x,y)$ and $(x',y')$ coordinates. 

\begin{thm} \label{normal-morphism}
Assume $\vec \omega' = \vec \omega$. Then
\begin{enumerate}
\item \label{trivial-F} If $\omega_0 = \omega_1$, then $F$ is an affine automorphism. 
\item \label{trivial-F-zero} If $\omega_1 = 0$, then $F$ is an automorphism of the form $(x,y) \mapsto (ax+f(y),by + c)$ for some $a,b \in \cc^*$, $c\in \cc$, and $f(y) \in \cc[y]$. 
\item \label{positive-F} If $\omega_0 > \omega_1 > 0$, then $F$ is of the form below:  
\begin{align*}
&F : (x,y) \mapsto (\bar a x + f(y), \bar b y + c), \ \text{where}\\
&\bar a, \bar b \in \cc^*, \ 
	c = \begin{cases}
			\text{an arbitrary element in \cc} & \text{if $\chi_{l+1} \geq 0$,} \\
			0 & \text{otherwise.}
		 \end{cases}\\
& f(y) \in \cc[y],\ 
\deg(f) \leq \frac{\omega_0}{\omega_1}(\chi_{l+1} +1)-1
\end{align*}
where $\chi_{l+1}$ is the last formal characteristic exponent of $\tilde \phi_\delta(x,\xi)$; see \eqref{beta-i-j}.
\item \label{negative-F} If $\omega_0 > 0 > \omega_1$, then $F$ is of the form below:  
\begin{align*}
&F : (x,y) \mapsto (\bar a x + f(y), \bar b y), \ \text{where}\\
&\bar a, \bar b \in \cc^*,\ f(y) \in \cc[y],\ 
\ord(f) \geq \frac{\omega_0}{\omega_1}(\chi_{l+1} +1)  - 1.
\end{align*}
\item \label{unique-puiseux} Let $\tilde \phi_\delta(x,\xi) = \phi(x) + \xi x^r = \sum_\beta a_\beta x^{\beta} + \xi x^r$. Then 
\begin{enumerate}
\item $ \tilde \psi_\delta(x',\xi) = b\sum_\beta a_\beta a^{-\bar \omega_0\beta} x'^{\beta} + \xi x'^{r}$ for some $a,b\in \cc^*$, where $\bar \omega_0$ is the polydromy order of $\phi(x)$.
\item \label{unique-puiseux-b} If $F$ is as in \eqref{positive-F} or \eqref{negative-F}, then $\bar a = a^{\bar \omega_0}$ and $\bar b = b$. 
\end{enumerate}
\end{enumerate}
\end{thm}

\begin{rem} 
Given a fixed divisorial semidegree $\delta$ on $\cc[x,y]$, it can be shown that among all key sequences $\vec \omega$ of $\delta$ with respect to different coordinate systems on $\cc[x,y]$, $\omega_0$ is the minimum when $\vec \omega$ is in the normal form. 
\end{rem}

The final result of this section describes the polynomial automorphisms which {\em preserves} a divisorial semidegree; given an automorphism $F$ of $X$, we say that {\em $F$ preserves $\delta$}, or that $F^*(\delta) = \delta$ iff $\delta(f) = \delta(f \circ F)$ for all $f \in \cc[X]$.

\begin{thm} \label{preserving-thm}
Let $X$, $\delta$, $\vec \omega$ be as in \cref{normal-morphism} (in particular, $\vec \omega$ is in the normal form) and let $F : X \to X$ be an automorphism. 
\begin{enumerate}
\item \label{n=0} If $n = 0$, then $F^*( \delta) = \delta$ iff $F$ is as in one of assertions \eqref{trivial-F}--\eqref{negative-F} of \cref{normal-morphism}. 
\item \label{n>=1} If $n \geq 1$, then $F^*( \delta) = \delta$ iff $F$ is as in assertions \eqref{positive-F} or \eqref{negative-F} of \cref{normal-morphism} subject to the following additional constraints on $\bar a$ and $\bar b$: let $\bar\omega_k := \omega_{k}/\alpha_{n+1}$, $0 \leq k \leq n$, and $\baromegastar_k := \alpha_1 \bar \omega_1 + \sum_{j=2}^{k-1}(\alpha_j-1)\bar \omega_j - \bar \omega_k$, $2 \leq k \leq n$, where $\alpha_1, \ldots, \alpha_{n+1}$ are as in Definition \ref{key-seqn}. Set $\baromegastar := \gcd(\baromegastar_2, \ldots, \baromegastar_n)$ (note that $\baromegastar$ is defined only if $n \geq 2$). Then $\bar a = a^{\bar \omega_0}$ and $\bar b = a^{\bar \omega_1}$, where
\begin{align*}
a &= \begin{cases}
		\text{an arbitrary element of $\cc^*$} 	& \text{if $n = 1$,}\\	
		\text{an $\baromegastar$-th root of	unity}	& \text{if $n \geq 2$.}
	 \end{cases} 
\end{align*}
\end{enumerate}
\end{thm}

\section{Automorphisms of primitive compactifications} \label{automorphic-section}

Our first application of normal forms is the `rigidity' of $\cc^2$ in a primitive compactification:

\begin{prop} \label{rigid-prop}
Let $\bar X$ be a primitive compactification of $X := \cc^2$. Let $U$ be an (open) subset of $\bar X$ isomorphic to $\cc^2$.
\begin{enumerate}
\item \label{X-bar-general} If $\bar X \not\cong \pp^2(1,1,q)$ for any integer $q$, then $U = X$, i.e.\ there exists only one open subset of $\bar X$ isomorphic to $\cc^2$.
\item \label{X-bar-proj-1-q} Assume $\bar X \cong \pp^2(1,1,q)$, $q \geq 1$, with weighted homogeneous coordinates $[z:y:x]$. 
\begin{enumerate}
\item \label{X-bar-proj-0} If $q = 1$, then $U = \bar X \setminus V( ax + by +cz)$ for some $(a,b,c) \in \cc^3\setminus\{(0,0,0)\}$. 
\item \label{X-bar-proj-1} If $q > 1$, then $U = \bar X \setminus V( by +cz)$ for some $(b,c) \in \cc^2\setminus\{(0,0)\}$. 
\end{enumerate} 
\end{enumerate}
\end{prop}

\begin{proof}
Let $X_0 := \bar X \setminus \{P_\infty\}$, where $P_\infty$ is as in \cref{general-structure-thm}. Then $X_0$ is a quasi-projective variety (see the discussion following \cref{general-structure-thm}). We start with an (obvious!) observation: 
\begin{align} \label{degree-group-generator}
\parbox{.66\textwidth}{every irreducible curve on $X_0$ is linearly equivalent (as a Weil divisor) to an integer multiple of $C_\infty \cap X_0$, where $C_\infty := \bar X \setminus X$.}
\end{align}
Let $\delta$ be the semidegree on $\cc[X]$ corresponding to $C_\infty := \bar X \setminus X$. Assume there exists an open subset $U \subseteq \bar X$ such that $U \cong X$, but $U \neq X$. We will show that assertion \eqref{X-bar-proj-1-q} of the proposition holds.\\

Indeed, under our assumption $C := \bar X \setminus U$ is the closure (in $\bar X$) of an irreducible curve on $X$ defined by some $h \in \cc[x,y]$. Since $C$ is the {\em curve at infinity} on $\bar X$ with respect to $U$, assertion \eqref{non-singinfinity} of \cref{general-structure-thm} implies that 
\begin{enumerate}[label=(\roman{enumi})]
\item \label{C-non-singular} $C \setminus \{P_\infty\}$ is non-singular; in particular, $C \cap X$ is a non-singular rational curve.
\end{enumerate} 
On the other hand, observation \eqref{degree-group-generator} applied to $U$ implies  that $C_\infty \cap X_0$ is linearly equivalent to an integer multiple of $C \cap X_0$. This implies that $\delta(h) = 1$ (since $\Div(h) = C - \delta(h) C_\infty$). \cref{delta-1-cor} implies that 
\begin{enumerate}[label=(\roman{enumi})]
\setcounter{enumi}{1}
\item \label{h-linear} $h$ is a linear combination of some key forms of $\delta$ and constants, and
\item \label{h-one-place} the curve $h=0$ has only one place at infinity.
\end{enumerate} 
Combining \ref{C-non-singular} and \ref{h-one-place} yields:
\begin{enumerate}[label=(\roman{enumi})]
\setcounter{enumi}{3}
\item \label{c-c} $C\cap X \cong \cc$.
\end{enumerate} 
Choose coordinates $(x,y)$ on $X$ such that the key sequence $\vec \omega = (\omega_0, \ldots, \omega_{n+1})$ of $\delta$ is in the normal form. Observation \ref{c-c} and the Abhyankar-Moh-Suzuki theorem \cite{abhya-moh-line} imply that
\begin{enumerate}[label=(\roman{enumi})]
\setcounter{enumi}{4}
\item \label{h-x-y} either $h$ is a polynomial of degree $1$, or
\item \label{division} one of the integers among $\{\deg_x(h), \deg_y(h)\}$ divides the other.
\end{enumerate} 

\begin{proclaim} \label{wt-proj-claim}
\eqref{trivially-normal} holds with $\omega_1 = 1$. In particular, $\bar X \cong \pp^2(1,1,\omega_0)$.
\end{proclaim}
\begin{proof}
If \ref{h-x-y} occurs, then since $\delta(h) = 1$, we have either $\omega_0 = \delta(x) = 1$ or $\omega_1 = \delta(y) = 1$. An inspection of normal forms shows that only possibility is \eqref{trivially-normal} with $\omega_1 = 1$. This implies that $\bar X \cong \pp^2(1,1,\omega_0)$. On the other hand, if \ref{h-x-y} does not hold, then \ref{h-linear} and defining properties of key forms imply that $\deg_x(h)/\deg_y(h) = \omega_1/\omega_0$. Assertion \ref{division} and the defining properties of normal forms then imply again that $\omega_1 = 1$ and $\bar X \cong \pp^2(1,1,\omega_0)$. 
\end{proof}
Assertion \ref{X-bar-proj-1-q} follows from combining \cref{wt-proj-claim} with the observation that $\delta(h) = 1$. 
\end{proof}

Combining \cref{preserving-thm} with \cref{rigid-prop} immediately yields a complete description of groups of automorphisms of primitive compactifications of $\cc^2$. 

\begin{thm} \label{aut-cor}
Fix a system of coordinates $(x,y)$ on $X := \cc^2$. Let $\vec \omega := (\omega_0, \ldots, \omega_{n+1})$ be a primitive key sequence in normal form, $\vec\theta := (\theta_1, \ldots,\theta_n) \in \ntorus$, and $\bar X := \xomegatheta$ be the corresponding primitive compactification of $X$. Let $\scrG$ be the group of automorphisms of $\bar X$. 
\begin{enumerate}	
\item \label{aut-wt} If  \eqref{trivially-normal} holds, then $\bar X \cong \pp^2(1,\omega_0,\omega_1)$. Fix (weighted) homogeneous coordinates $[z:x:y]$ on $\bar X$. 
\begin{enumerate}
\item \label{aut-wt-0} If $\omega_0 = \omega_1 = 1$, then $\bar X \cong \pp^2$ and $\scrG \cong PGL(3,\cc)$. 
\item \label{aut-wt-1} If $\omega_0 > \omega_1 = 1$, then $\scrG = \{[z:x:y] \mapsto [az+by:cx+f(y,z):dz + ey] : a,b,d,e\in \cc,\ ad -be \neq 0,\ c \in \cc^*,\ f$ is a homogeneous polynomial in $(y,z)$ of degree $\omega_0\}$.
\item \label{aut-wt-2} If $\omega_0 > \omega_1 > 1$, then $\scrG = \{[z:x:y] \mapsto [z:ax+ f(y,z): by + cz^{\omega_1}] : a,b\in \cc^*,\ c\in \cc,\ f$ is a weighted homogeneous polynomial in $(y,z)$ of weighted degree $\omega_0\}$.
\end{enumerate}
\item \label{aut-general} If \eqref{generally-normal} holds, define $\bar \omega_0, \ldots, \bar \omega_n, \baromegastar$ as in assertion \eqref{n>=1} of \cref{preserving-thm}, and set 
$$k_{\bar X} := -\left(\omega_0 + \omega_{n+1} + 1 - \sum_{k=1}^n (\alpha_k -1)\omega_k\right),$$
where $\alpha_1, \ldots, \alpha_{n+1}$ are as in Definition \ref{key-seqn}. Then $\scrG$ consists of all $F:\bar X \to \bar X$ such that $F|_X : (x,y) \mapsto (a^{\bar \omega_0} x + f(y), a^{\bar \omega_1} y + c)$, where
\begin{align*}
a &= \begin{cases}
		\text{an arbitrary element of $\cc^*$} 	& \text{if $n = 1$,}\\	
		\text{an $\baromegastar$-th root of	unity}	& \text{if $n \geq 2$.}
	 \end{cases} \\
c &= \begin{cases}
		0 & \text{if}\ \omega_0 + k_{\bar X} \geq 0, \\
		\text{an arbitrary element in \cc} & \text{otherwise.}
	 \end{cases}
\end{align*}
and $f(y) \in \cc[y]$ is a polynomial such that $\deg(f) \leq -(k_{\bar X} + \omega_1 + 1)/\omega_1$. \qed
\end{enumerate}
\end{thm}

\begin{proof}
We only need to check that
\begin{prooflist}
\item \label{autoN1:1} $\frac{\omega_0}{\omega_1}(\chi_{l+1}+1) - 1 = -(k_{\bar X} + \omega_1 + 1)/\omega_1$, and
\item \label{autoN1:2} $\chi_{l+1} \geq 0$ iff $\omega_0 + k_{\bar X} < 0$. 
\end{prooflist}
Identity \eqref{omega-and-chi-0} implies that 
\begin{align} 
\omega_{n+1} = \sum_{k=1}^n(\alpha_k - 1)\omega_k + \chi_{l+1}\omega_0 \label{omega-l+1}
\end{align}
It follows that 
\begin{align*}
(k_{\bar X} + \omega_1 + 1)/\omega_1 
	&= (-\omega_0+\omega_1 -  \chi_{l+1}\omega_0)/\omega_1 = 1 - \omega_0(\chi_{l+1}+1)/\omega_1
\end{align*}
which proves \ref{autoN1:1}. Now \eqref{omega-l+1} implies that 
\begin{align*}
\omega_0 + k_{\bar X} = -1 - \chi_{l+1}\omega_0
\end{align*}
Since $\omega_0 + k_{\bar X}$ is an integer, it follows that $\chi_{l+1} \geq 0$ 
iff $\omega_0 + k_{\bar X}  < 0$, as required to prove \ref{autoN1:2}. 
\end{proof}

\begin{cor} \label{aut-cor1}
Adopt the notations of \cref{aut-cor}. If $k_{\bar X} \geq -1$ and $n \geq 2$, then $\bar X$ admits only finitely many automorphisms. In particular, every non-algebraic primitive compactification of $\cc^2$ admits only finitely many automorphisms.
\end{cor}

\begin{proof}
The first statement follows from assertion \eqref{aut-general} of \cref{aut-cor}. For the last statement, we show that if $\bar X$ is non-algebraic, then $k_{\bar X} > -1$ and $n \geq 2$. Indeed, if $k_{\bar X} \leq -1$, then \cref{singularity-cor} implies that $\bar X$ has only rational singularities, so that Artin's contraction criterion \cite{artin-rational} implies that $\bar X$ is algebraic. On the other hand, if $n \leq 1$, then it is straightforward to see that $\vec \omega$ is an algebraic key sequence, so that $\bar X$ is algebraic (\cref{key-primitive-correspondence}).
\end{proof}

\section{Moduli spaces} \label{moduli-section}
In this section we compute moduli spaces of two kinds of objects:
\begin{itemize}
\item primitive normal compactifications of $\cc^2$ modulo isomorphisms (in the category of normal analytic surfaces), and
\item curves $C \subseteq \cc^2$ with one place at infinity modulo automorphisms of $\cc^2$. 
\end{itemize}

\begin{defn}
Let $\vec \omega$ be a primitive key sequence in normal form. We denote by $\yomega$ the moduli space of primitive normal compactifications of $\cc^2$ with key sequence $\vec \omega$. More precisely, $\yomega$ is the set of (isomorphism classes of) compact normal analytic surfaces $Y$ of Picard rank $1$ such that
\begin{enumerate}
\item $Y$ has a subset $X$ isomorphic to $\cc^2$, and
\item if $\delta$ is the semidegree on $\cc[X]$ corresponding to $C_\infty := Y \setminus X$, then there exists a system of coordinates $(x,y)$ on $X$ such that $\delta(x) > 0$ and $\vec \omega$ is the key sequence of $\delta$ in $(x,y)$ coordinates.
\end{enumerate}
In case that $\vec \omega$ is also an {\em essential} key sequence, we denote by $\yomegaess$ the union of all $\yomegaprime$ such that
\begin{enumerate}
\setcounter{enumi}{2}
\item $\vec \omega'$ is a primitive key sequence in normal form, and
\item the essential subsequence of $\vec \omega'$ is $\vec \omega$.
\end{enumerate} 
Finally $\yomegaessalg$ is the subset of $\yomegaess$ consisting of all $Y \in \yomegaess$ such that $Y$ is {\em algebraic}.
\end{defn}

\begin{rem}
The algebraicity of $\yomega$ depends only on $\vec \omega$ (\cref{key-primitive-correspondence}), i.e.\ if $\vec \omega$ is algebraic, then all elements of $\yomega$ are algebraic surfaces, and if $\vec \omega$ is not algebraic, then no element of $\yomega$ is algebraic. Similarly, when $\vec \omega$ is essential, the algebraicity of $\vec \omega$ determines if $\yomegaessalg$ is non-empty.
\end{rem}

Let $\vec \omega := (\omega_0, \ldots, \omega_{n+1})$ be a primitive key sequence in normal form. Let $X$ be a {\em fixed} copy of $\cc^2$ with fixed system of coordinates $(x,y)$. Then the map 
\begin{align}
 \ntorus \ni \vec \theta \mapsto \xomegatheta \in \yomega \label{moduli-map}
\end{align}
is surjective. Consequently, $\yomega$ is isomorphic to the quotient space of $\ntorus$ modulo the equivalence relation $\vec \theta \sim \vec \theta'$ iff $\xomegatheta \cong \xomegathetaprime$. \Cref{moduli-cor} below describes this equivalence relation; we now set up necessary notations. Define $\alpha_i$'s and $\beta_{i,j}$'s as in Remark \ref{unique-remark}. Moreover, set $\alpha_0 := 1$. Let $\vec \omega_e := (\omega_{i_0}, \ldots, \omega_{i_{l+1}})$ be the essential subsequence of $\vec \omega$. Recall that $i_0 = 0$ and $i_{l+1} = n+1$. The normality of $\vec \omega$ further implies that $i_1 = 1$. Define $\mu_1, \ldots, \mu_n \in \zz$ as follows: for each $i$, $1 \leq i \leq n$, pick the unique $k$ such that $i_k \leq i < i_{k+1}$, and set
\begin{align*}
\mu_i := \alpha_{i_0} \cdots \alpha_{i_k} - \sum_{j=1}^k \alpha_{i_0} \cdots \alpha_{i_{j-1}} \beta_{i,i_{j}}
\end{align*}
(we note that $\mu_i$'s are the same as in \cref{theta-thm}). 

\begin{thm} \label{moduli-cor}
\mbox{}
\begin{enumerate}
\item \label{yomega-singleton} If $n = 0$, then $\yomega$ is a point (corresponding to the weighted projective surface $\pp^2(1,\omega_0,\omega_1)$).
\item \label{yomega-general} If $n= 1$, then the map from \eqref{moduli-map} induces an isomorphism $\yomega \cong \ntorus/(\cc^*)^2$, where the action of $(\cc^*)^2$ is given by
\begin{align}
(\lambda_1, \lambda_2)\cdot (\theta_1, \ldots, \theta_n) := 
(\lambda_1^{-\beta_{1,0}}\lambda_2^{\mu_1}\theta_1, \ldots, 
\lambda_1^{-\beta_{n,0}}\lambda_2^{\mu_n}\theta_n). \label{yomega-general-action}
\end{align}

In particular, $\yomega \cong (\cc^*)^{\max\{n-2,0\}}$.
\end{enumerate} 
\end{thm}

\begin{proof}
Assertion \ref{yomega-singleton} follows from \cref{aut-cor}. We now prove assertion \eqref{yomega-general}. Pick $\vec\theta, \vec \theta' \in \ntorus$ such that there exists an isomorphism $F:\xomegatheta \cong \xomegathetaprime$. \cref{rigid-prop} implies that $F|_X$ is an automorphism of $X$. It follows that $\deltaomegathetaprime = F^*(\deltaomegatheta)$. Let $\tilde \phi_\delta(x,\xi) = \phi(x) + \xi x^r = \sum_{\beta \leq \beta_0} a_\beta x^{\beta} + \xi x^r$ be the generic descending Puiseux series of $\deltaomegatheta$ with respect to $(x,y)$-coordinates. Assertion \ref{unique-puiseux} of \cref{normal-morphism} then implies that the generic descending Puiseux series of $\deltaomegathetaprime$ in $(x,y)$-coordinates is $b\sum_{\beta \leq \beta_0} a_\beta a^{-\bar \omega_0\beta} x^{\beta} + \xi x^{r}$ for some $a, b \in \cc^*$, where $\bar \omega_0$ is the polydromy order of $\phi(x)$. \Cref{theta-thm} then implies that $(\theta'_1, \ldots, \theta'_n) = (b^{\mu_1}a^{-\omega_0\beta_{1,0}}\theta_1, \ldots, b^{\mu_n}a^{-\omega_0\beta_{n,0}}\theta_n)$. This proves assertion \eqref{yomega-general} and finishes the proof of the theorem.
\end{proof}

We continue to use the notations of \cref{moduli-cor}. Assume in addition that $\vec \omega$ is essential, i.e.\ $\vec \omega_e = \vec \omega$. We now describe $\yomegaess$ and $\yomegaessalg$. \\

For each $k$, $1 \leq k \leq n$, define 
\begin{align*}
\check \Omega_k 
	&:= \left(\zz\langle \omega_0, \ldots, \omega_k \rangle \cap (\omega_{k+1}, \alpha_k\omega_k)\right) \setminus \Omega^\times_k,\\
\check \Omega^{alg}_k 
	&:= \left(\zz_{\geq 0}\langle \omega_0, \ldots, \omega_k \rangle \cap (\omega_{k+1}, \alpha_k\omega_k)\right) \setminus \Omega^\times_k,	
\end{align*}
where $(\omega_{k+1}, \alpha_k\omega_k)$ is the open interval between $\omega_{k+1}$ and $\alpha_k\omega_k$; $\zz\langle \omega_0, \ldots, \omega_k \rangle$, $\zz_{\geq 0}\langle \omega_0, \ldots, \omega_k \rangle$ denote respectively the group and the semigroup generated by $\omega_0, \ldots, \omega_k$, and
\begin{align*}
\Omega^\times_k &:= \left\{ \sum_{j=1}^{k} (\alpha_j-1)\omega_j + m\omega_0: m \geq 0 \right\} \bigcup \left\{ \sum_{j=2}^{k} (\alpha_j-1)\omega_j + \alpha_1\omega_1 - \omega_0 \right\}.			
\end{align*}
Let $\check \Omega := \bigcup_{k=1}^n \check \Omega_k$, $\check \Omega^{alg} := \bigcup_{k=1}^n \check \Omega^{alg}_k$, and $m_k := |\bigcup_{j=1}^k \check \Omega_j|$, $1 \leq k \leq n$; note that $|\check \Omega| = m_n$. Set $m_0 := 1$. For each $k$, $1 \leq k \leq n$, denote the elements of $\check \Omega_k$ in decreasing order by $\check \omega_{m_{k-1} + 1} > \check \omega_{m_{k-1} + 2} > \cdots > \check \omega_{m_k}$. Then $\check \Omega = \{\check \omega_1, \ldots, \check \omega_{m_n}\}$. Let 
\begin{align}
\vec{\hat \omega} := (\omega_1, \check \omega_1, \ldots, \check \omega_{m_1}, \omega_2, \check \omega_{m_1+1}, \ldots, \check \omega_{m_2}, \ldots, \omega_n, \check \omega_{m_{n-1}+1}, \ldots, \check \omega_{m_n}) \label{hat-omega}
\end{align}
Given a subset $S$ of $\{1, \ldots, m_n\}$, let $\pi_S(\vec {\hat \omega})$ be the element formed from $\vec {\hat \omega}$ by omitting all $\check \omega_i$ such that $i \not\in S$. 
\begin{claim}
Let $\vec \Omega'$ be the set of all key sequences in normal form with essential subsequence $\vec \omega$. Then 
\begin{align*}
\vec \Omega' = \{\pi_S(\vec {\hat \omega}):  S \subseteq \{1, \ldots, m_n\}\}.
\end{align*}
\end{claim}

\begin{proof}
The claim follows in a straightforward manner from the definition of normal forms and essential subsequences, once we make the following observation: if $\vec \omega'$ is a primitive key sequence with essential subsequence $\vec \omega$, say $\omega_k = \omega'_{i'_k}$, $1 \leq  k \leq n+1$. Then $\vec \omega'$ satisfies condition \eqref{generally-omega} of normal forms iff for each $k = 1, \ldots, n$, $\omega'_i \not\in \Omega^\times_k$ for each $i = i'_k + 1, \ldots, i'_{k+1} - 1$.
\end{proof}

Let $\Theta: \ntorus \times \cc^{m_n} \to \cc^{m_n+n}$ be the map defined by
\begin{align}
((\theta_1, \ldots, \theta_n), (\check \theta_1, \ldots, \check \theta_{m_n}))
	\mapsto (\theta_1, \check \theta_1, \ldots, \check \theta_{m_1}, \theta_2, \check \theta_{m_1+1}, \ldots, \check \theta_{m_2}, \ldots, \theta_n, \check \theta_{m_{n-1}+1}, \ldots, \check \theta_{m_n})
\end{align}
Pick $(\vec \theta, \vec{\check \theta}) \in \ntorus \times \cc^{m_n}$. Let $S:= \{i:  \check \theta_i \neq 0\} \subseteq \{1, \ldots, m_n\}$, and $m := |S|$. Let $\vec \theta'_{(\vec \theta, \vec{\check \theta})} \in (\cc^*)^{m+n}$ be the element formed by dropping all the zero coordinates of $\Theta(\vec \theta, \vec{\check \theta})$, and let $\vec \omega'_{(\vec \theta, \vec{\check \theta})} := \pi_S(\vec{\hat \omega})$ be the corresponding key sequence. Then the map 
\begin{align}
\ntorus \times \cc^{m_n} \ni (\vec \theta, \vec{\check \theta}) \mapsto
	 \bar X_{\vec \omega'_{(\vec \theta, \vec{\check \theta})} , \vec \theta'_{(\vec \theta, \vec{\check \theta})}}
	 \in \yomegaess \label{moduli-map-alg}
\end{align}
is a surjection, and \cref{moduli-cor} combined with \cref{key-primitive-correspondence} immediately gives the following description of $\yomegaess$ and $\yomegaessalg$.

\begin{cor} \label{moduli-essential-cor}
 Fix $i$, $1 \leq i \leq m_n$. Pick (the unique) $k$, $1 \leq k \leq n$, such that $m_{k-1} < i \leq m_k$. Then there are unique integers $\check \beta_{i,0}, \ldots, \check \beta_{i,k}$ such that $0 \leq \check \beta_{i,j} < \alpha_j$ for $1 \leq j \leq k$ and $\check \omega_i = \sum_{j=0}^k \check \beta_{i,j}\omega_j$. Define $\check \mu_i := \alpha_{0} \cdots \alpha_{k} - \sum_{j=1}^k \alpha_{0} \cdots \alpha_{j-1}\check \beta_{i,j}$. 
\begin{enumerate}
\item \label{yomega-essentially-general} $\yomegaess \cong \left(\ntorus \times \cc^{m_n}\right) /(\cc^*)^2$, where the action of $(\cc^*)^2$ is given by
\begin{align}
(\lambda_1, \lambda_2) \cdot (\vec \theta, \vec{\check \theta})  
	:= 
(\lambda_1^{-\beta_{1,0}}\lambda_2^{\mu_1} \theta_1, \ldots, 
\lambda_1^{-\beta_{n,0}} \lambda_2^{\mu_n} \theta_n, 
\lambda_1^{-\check \beta_{1,0}} \lambda_2^{\check \mu_1} \check \theta_1, \ldots, 
\lambda_1^{-\check \beta_{m_n,0}} \lambda_2^{\check \mu_{m_n}} \check \theta_{m_n}), \label{yomega-essential-action}
\end{align}
where $(\vec \theta, \vec{\check \theta}) := (\theta_1, \ldots, \theta_n),(\check \theta_1, \ldots, \check \theta_{m_n}) \in \ntorus \times \cc^{m_n}$.
\item If $\vec \omega$ is not algebraic then $\yomegaessalg = \emptyset$.
\item \label{algebraic-moduli} If $\vec \omega$ is algebraic, then $\yomegaessalg \cong  Y^{e,alg}_{\vec \omega}/(\cc^*)^2$, where
$$Y^{e,alg}_{\vec \omega} := \{(\theta, \check \theta) \in \ntorus \times \cc^{m_n}: \check \theta_i = 0\ \text{for all $i$, $1 \leq i \leq m_n$, such that}\ \check \omega_i \not\in \check \Omega^{alg}\},$$
and the action of $(\cc^*)^2$ on $Y^{e,alg}_{\vec \omega}$ is induced from \eqref{yomega-essential-action}.
\end{enumerate}
\end{cor}

As an application of \cref{moduli-cor} we describe the moduli space of embedded isomorphism classes of planar curves with one place at infinity. \Cref{approximate-remark} describes a {\em coordinate-free} correspondence between plane curves $C$ with one place at infinity and divisorial semidegrees $\delta_C$ on $\cc[x,y]$. If the coordinates are chosen in a way that the key sequence of $\delta_C$ is in the normal form, identity \eqref{delta-from-key} implies that the $\delta$-sequence of $C$ in these coordinates is also in the normal form (when viewed as a key sequence).

\begin{defn} \label{curve-moduli}
Let $\vec d$ be a $\delta$-sequence  in normal form. We denote by $\cd$ the space of all pairs $(C,U)$ such that $U \cong \cc^2$ and $C$ is a curve on $U$ with one place at infinity such that the $\delta$-sequence of $C$ is $\vec d$ with respect to some systems of coordinates on $U$. For $(C,U), (C',U') \in \cd$, we write $(C,U) \sim (C',U')$ iff there is an isomorphism $F:U \to U'$ such that $C' = F(C)$. The `embedded isomorphism classes' of planar curves with one place at infinity is the quotient $\cdbar$ of $\cd$ by the equivalence relation $\sim$. 
\end{defn}

Let $\vec d :=  (d_0, \ldots, d_n)$ be a delta sequence in normal form. \Cref{approximate-remark} implies that $\vec \omega := (d_0, \ldots, d_n,0)$ is an essential algebraic key sequence. Construct $\check \Omega$, $\check \Omega^{alg}$, $\vec{\hat \omega}$ and define $m_n, \vec \omega'_{(\vec \theta, \vec{\check \theta})}$, $\vec \theta'_{(\vec \theta, \vec{\check \theta})}$ as in the paragraphs following  \cref{moduli-cor}. Let $\gthetatheta$ be the last key form of $\delta_{\vec \omega'_{(\vec \theta, \vec{\check \theta})} , \vec \theta'_{(\vec \theta, \vec{\check \theta})}}$.

\begin{thm} \label{one-place-cor}
Adopt the notations of \cref{moduli-essential-cor}. Recall that $X$ is a fixed copy of $\cc^2$ with fixed system of coordinates $(x,y)$. Consider the map 
\begin{align*}
\Psi: Y^{e,alg}_{\vec \omega} \times \cc \ni (\vec \theta, \vec{\check \theta}, c) \mapsto (\cthetathetac, X) \in \cdbar
\end{align*}
where $Y^{e,alg}_{\vec \omega}$ is as in assertion \eqref{algebraic-moduli} of \cref{moduli-essential-cor}, and $\cthetathetac := \{\gthetatheta - c = 0\} \subseteq X$. Then $\Psi$ induces an isomorphism $\cdbar \cong (Y^{e,alg}_{\vec \omega}  \times \cc)/(\cc^*)^2$, where the action of $(\cc^*)^2$ on $Y^{e,alg}_{\vec \omega}  \times \cc$ is given by 
\begin{align}
(\lambda_1, \lambda_2)\cdot (\vec \theta, \vec{\check \theta}, c) 
	&= ( (\lambda_1, \lambda_2)\cdot (\vec \theta, \vec{\check \theta}), \lambda_2^{d_0}c) \label{moduli-curve-action}
\end{align}
where $(\lambda_1, \lambda_2) \cdot (\vec \theta, \vec{\check \theta})$ is as in \eqref{yomega-essential-action}.
\end{thm}

\begin{proof}
The discussion in Remark \ref{approximate-remark} implies that $\Psi$ is surjective, so that we only have to determine when two points correspond to the same embedded isomorphism class. At first consider the case $n = 0$, i.e.\ $\vec d = (1)$. Then $\vec \omega = (1,0)$, $m_n = 0$, $Y^{e,alg}_{\vec \omega}$ is a singleton, and \eqref{moduli-curve-action} shows that $(Y^{e,alg}_{\vec \omega}  \times \cc)/(\cc^*)^2$ is also a singleton. On the other hand, $\cthetathetac$'s are simply the curves $\{y - c = 0\}$, which are all isomorphic. It follows that $\cdbar$ is also a singleton, and the theorem holds. \\

So assume $n \geq 1$, and pick $(\vec \theta, \vec{\check \theta}, c) , (\vec \theta', \vec{\check {\theta'}}, c')$ such that there is an isomorphism $\phi: X \to X$ such that $F(\cthetathetac) = \cthetathetacpprime$. It follows that $\delta_{\cthetathetacpprime} = F^*(\delta_{\cthetathetac})$, where $\delta_{\cthetathetac}$ and $\delta_{\cthetathetacpprime} $ are defined as in Remark \ref{approximate-remark}. Since $\vec \omega$ is the essential subsequence of both $\delta_{\cthetathetac} $ and $\delta_{\cthetathetacpprime}$, and since $n \geq 1$ and $\omega_{n+1} = 0$, assertion \eqref{positive-F} of \cref{normal-morphism} and observation \ref{autoN1:2} from the proof of \cref{aut-cor} imply that $F : (x,y) \mapsto (\bar a x,  \bar b y)$ for some $\bar a, \bar b \in \cc^*$. The arguments from the proof of \cref{moduli-cor} and assertion \eqref{unique-puiseux-b} of \cref{normal-morphism} imply that $ (\vec \theta', \vec{\check {\theta'}}) = (\lambda_1, \lambda_2) \cdot (\vec \theta, \vec{\check \theta})$, with $\bar a = \lambda_1^{d_0}$ and $\bar b = \lambda_2$. It follows that 
\begin{align*}
V(\gthetathetapprime - c') 
	= F(V(\gthetatheta - c)) 
	= V(g(\lambda_1^{-d_0}x, \lambda_2^{-1}y) - c)
\end{align*}
Since $\gthetatheta$ and $\gthetathetapprime$ are monic of degree $d_0$ in $y$ (assertion \eqref{deg-y-g-n+1} of \cref{last-curve-prop}), it follows that $\gthetathetapprime = \lambda_2^{d_0}g(\lambda_1^{-\bar \omega_0}x, \lambda_2^{-1}y)$ and $c' = \lambda_2^{d_0}c$, as required to complete the proof. 
\end{proof}

\section{Canonical divisor} \label{canonical-section}
In this section we compute the canonical divisor of a primitive compactifications of $\cc^2$ in terms of the associated key sequence (\cref{canonical-thm}), and give some of its applications. In \cref{canonical-0-section} we state \cref{canonical-thm} and use it to characterize $\pp^2(1,1,q)$ in terms of log discrepancy and skewness of the curve at infinity. In \cref{singular-section} we characterize when a primitive compactification has simple types of singularities or when it is Gorenstein. Finally, in \cref{canonical-subsection} we give the proof of \cref{canonical-thm}.

\subsection{The formula for canonical divisor and a characterization of $\pp^2(1,1,q)$}
\label{canonical-0-section}
Let $X := \cc^2$. Throughout \cref{canonical-0-section} $\delta$ is a divisorial semidegree on $\cc[X]$. 

\begin{defn}[{\cite[Section 9.3.3]{jonsson-dykovich}}] \label{log-discrepancy}
Let $\bar X$ be a normal analytic compactification of $X$ such that $\delta$ is centered at a curve $C$ at infinity on $\bar X$. Let $K_{\bar X}$ be the (unique) Weil divisor representing the canonical divisor of $\bar X$ such that $\supp(K_{\bar X}) \subseteq \bar X \setminus X$. The {\em log discrepancy} $A_\delta$ of $\delta$ is one plus the coefficient of $[C]$ in $K_{\bar X}$ (where $[C]$ is the Weil divisor corresponding to $C$). 
\end{defn}

\begin{thm} \label{canonical-thm}
Let $\vec \omega := (\omega_0, \ldots, \omega_{n+1})$ be the key sequence of $\delta$ in some system of coordinates on $X$. Then 
\begin{align}
	A_\delta &= -\omega_0 - \omega_{n+1} + \sum_{k=1}^n (\alpha_k -1)\omega_k \label{log-discrepancy-formula}
\end{align}
where $\alpha_1, \ldots, \alpha_{n+1}$ are as in Definition \ref{key-seqn}.
In particular, if $\delta$ is primitive algebraic and $\bar X$ is the corresponding primitive algebraic compactification of $X$, then the canonical divisor of $\bar X$ is
\begin{align}
	K_{\bar X}	&= -\left(\omega_0 + \omega_{n+1} + 1 - \sum_{k=1}^n (\alpha_k -1)\omega_k \right)[C_\infty], \label{canonical-formula}
\end{align}
where $[C_\infty]$ is the Weil divisor corresponding to $C_\infty$. 
\end{thm}

%
%

\begin{reminition}[{\cite[Section 9.3.3]{jonsson-dykovich}}] \label{index}
Let $\bar X$ be a normal analytic compactification of $X$ such that $\delta$ is centered at a curve $C$ at infinity on $\bar X$. Let $\check C$ be the unique curve supported at $\bar X \setminus X$ such that $(\check C, C) = 1$ and $(\check C, D) = 0$ for all irreducible curve $D \neq C$ at infinity on $\bar X$ (here we consider the intersection product on normal surfaces defined by Mumford \cite{mumford-normal}). The {\em \skewness} of $\delta$ is $\alpha_\delta := (\check C, \check C)$. It is straightforward to see that $\alpha_\delta$ is independent of the choice of $\bar X$. Moreover,
\begin{itemize}
\item \cite[Theorem 1.5 and Remark 1.6]{sub2-1} imply that if $(\omega_0, \ldots, \omega_{n+1})$ is the key sequence of $\delta$ in a system of coordinates on $X$, then
\begin{align}
\alpha_\delta = \alpha_{n+1}\omega_{n+1} \label{alpha_delta}
\end{align}
where $\alpha_{n+1} := \gcd(\omega_0, \ldots, \omega_n)$. 
\item Identity \eqref{alpha_delta} implies in particular that $\delta$ is primitive iff $\alpha_\delta > 0$. 
\item \Cref{index-prop} and identity \eqref{alpha_delta} imply that if $\bar X$ is primitive algebraic, then $\alpha_\delta$ is precisely the index of the Weil divisor at infinity (this is the motivation for our terminology `index' for $\alpha_\delta$). 
\end{itemize}

\end{reminition}

\begin{cor} \label{11q-thm}
The following are equivalent:
\begin{enumerate}
\item \label{A-alpha-condition} $\alpha_\delta \geq 0$ and $A_\delta \leq -\alpha_\delta$.
\item \label{1,1,q} there is a system of coordinates $(u,v)$ on $X$ and a non-negative integer $q$ such that $\delta$ is the weighted degree corresponding to weight $1$ for $u$ and $q$ for $v$. 
\end{enumerate}
If either of these conditions holds, then $q = \alpha_\delta$ and $A_\delta = -(\alpha_\delta+1)$. 
\end{cor}

\begin{proof}
If condition \eqref{A-alpha-condition} holds, then the key sequence of $\delta$ in $(u,v)$ coordinates is $\vec \omega = (1,q)$. Condition \eqref{1,1,q} then follows immediately from identities \eqref{log-discrepancy-formula} and \eqref{alpha_delta}. Now we verify the implication \eqref{A-alpha-condition} $\im$ \eqref{1,1,q}. Choose a system of coordinates $(x,y)$ on $X$ such that the corresponding key sequence $\vec \omega := (\omega_0, \ldots, \omega_{n+1})$ of $\delta$ is in the normal form. Identities \eqref{log-discrepancy-formula} and \eqref{alpha_delta} imply that 
\begin{align*}
A(\delta) + \alpha(\delta) 
	&= -\omega_0 + \sum_{k=1}^{n+1} (\alpha_k -1)\omega_k 
	= -\omega_1 +  (\alpha'_0 -1)\omega_0 + \sum_{k=2}^{n+1} (\alpha_k -1)\omega_k
\end{align*}
where $\alpha'_0 := \omega_1/\gcd(\omega_0, \omega_1)$. Note that $\alpha_k \geq 1$ for $k = 2, \ldots, n+1$. Moreover, the normality of $\vec \omega$ implies that if $\alpha'_0 > 1$, then $\omega_0 > \omega_1$. Since $A(\delta) + \alpha(\delta) \leq 0 $, it follows that $\alpha'_0 = 1$. It follows then from the properties of normal form that $n = 0$ and $\omega_1 = 1$. Consequently $\alpha_{n+1} = \omega_0$, so that identity \eqref{alpha_delta} implies that $\omega_0 = \alpha_\delta$. Therefore $\delta$ is the weighted degree in $(x,y)$ coordinates corresponding to weight $\alpha_\delta$ for $x$ and $1$ for $y$, as required. 
\end{proof}

\begin{cor}[{\cite[Theorem 1.2]{borisov}}] \label{borisov}
The following are equivalent:
\begin{enumerate}
\item $\alpha_\delta = 1$ and $A_\delta = -2$.
\item $\delta$ is the degree of polynomials in some system of coordinates on $X$. \qed
\end{enumerate}
\end{cor}

\subsection{Primitive compactifications with simple singularities} \label{singular-section}
Throughout this section $\bar X$ is a primitive normal compactification of $X := \cc^2$ and $\vec \omega := (\omega_0, \ldots, \omega_{n+1})$ is the key sequence corresponding to the semidegree $\delta$ on $\cc[X]$ with respect to a system of coordinates $(x,y)$ on $X$ such that $\delta(x) > 0$. In this section we characterize in terms of $\vec \omega$ when $\bar X$ has simple types of singularities. \\

Recall that the {\em geometric genus} of an isolated singular point $P$ on a complex surface $Y$ is $p_g(P) := \dim_\cc(R^1 \pi_*\sheaf_{\tilde Y})_{P}$, where $\pi:\tilde Y \to Y$ is a resolution of singularities. The singularity of $P$ is called {\em rational} (resp.\ {\em elliptic}) if $p_g(P) = 0$ (resp.\ $p_g(P) = 1$). 

\begin{lemma}[{\cite[Lemma 2.2]{furushima}}] \label{geometric-sum}
Assume $\bar X$ is algebraic. Then the sum of the geometric genera of singular points of $\bar X$ is equal to $\dim_\cc(H^0(\bar X, \sheaf_{\bar X}(K_{\bar X}))$.
\end{lemma}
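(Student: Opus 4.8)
The plan is to compare the cohomology of the structure sheaf of $\bar X$ with that of a resolution of singularities via the Leray spectral sequence, and then to convert the resulting $H^2$ on $\bar X$ into an $H^0$ of the canonical sheaf by Serre duality. The hypothesis that $\bar X$ is algebraic enters precisely here: by Theorem \ref{main-thm} it is the closure of $X$ in a weighted projective space, hence projective, which is what makes both the spectral-sequence bookkeeping and Serre duality available.

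First I would fix a resolution of singularities $\pi : \tilde X \to \bar X$. By Proposition \ref{general-structure-thm} the surface $\bar X$ is smooth away from the two points $P_0, P_\infty$, so $\pi$ is an isomorphism over $\bar X \setminus \{P_0, P_\infty\}$ and $R^1\pi_*\sheaf_{\tilde X}$ is a skyscraper sheaf supported on $\{P_0, P_\infty\}$ whose stalk at each singular point $P$ is, by definition, the space computing $p_g(P)$. Hence $\dim_\cc H^0(\bar X, R^1\pi_*\sheaf_{\tilde X}) = \sum_P p_g(P)$, which is the left-hand side of the lemma. Normality of $\bar X$ gives $\pi_*\sheaf_{\tilde X} = \sheaf_{\bar X}$, and since the fibres of $\pi$ are at most one-dimensional we have $R^q\pi_*\sheaf_{\tilde X} = 0$ for $q \geq 2$.

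Next I would extract the five-term exact sequence of low-degree terms from the Leray spectral sequence $E_2^{p,q} = H^p(\bar X, R^q\pi_*\sheaf_{\tilde X}) \Rightarrow H^{p+q}(\tilde X, \sheaf_{\tilde X})$. Because $R^q\pi_*\sheaf_{\tilde X} = 0$ for $q \geq 2$, and because $R^1\pi_*\sheaf_{\tilde X}$ is a skyscraper so that $H^p(\bar X, R^1\pi_*\sheaf_{\tilde X}) = 0$ for $p \geq 1$, this sequence is exact with a surjection at the right end:
$$ 0 \to H^1(\bar X, \sheaf_{\bar X}) \to H^1(\tilde X, \sheaf_{\tilde X}) \to H^0(\bar X, R^1\pi_*\sheaf_{\tilde X}) \to H^2(\bar X, \sheaf_{\bar X}) \to H^2(\tilde X, \sheaf_{\tilde X}) \to 0. $$
The crucial input is that $\tilde X$ is a smooth projective rational surface: $\bar X$ is a compactification of $\cc^2$, hence birational to $\pp^2$, so $\tilde X$ is rational and the birational invariants $h^1(\tilde X, \sheaf_{\tilde X})$ and $h^2(\tilde X, \sheaf_{\tilde X}) = p_g(\tilde X)$ both vanish. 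Feeding these vanishings into the sequence collapses it to an isomorphism $H^0(\bar X, R^1\pi_*\sheaf_{\tilde X}) \cong H^2(\bar X, \sheaf_{\bar X})$, so that $\sum_P p_g(P) = h^2(\bar X, \sheaf_{\bar X})$.

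Finally I would apply Serre duality on $\bar X$. A normal surface is automatically Cohen--Macaulay, and $\bar X$ is projective, so Serre duality yields $H^2(\bar X, \sheaf_{\bar X}) \cong H^0(\bar X, \omega_{\bar X})^*$ with $\omega_{\bar X}$ the dualizing sheaf; since $\omega_{\bar X}$ is reflexive and coincides on the smooth locus with the sheaf of the canonical Weil divisor, the two reflexive sheaves agree outside a codimension-two set and hence $\omega_{\bar X} \cong \sheaf_{\bar X}(K_{\bar X})$. Combining everything gives $\sum_P p_g(P) = h^2(\bar X, \sheaf_{\bar X}) = h^0(\bar X, \sheaf_{\bar X}(K_{\bar X}))$, as claimed. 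The step requiring the most care is the last one: one must invoke that a normal surface is Cohen--Macaulay in order to legitimately use Serre duality and to identify the dualizing sheaf with $\sheaf_{\bar X}(K_{\bar X})$. Everything else is formal once the rationality of $\tilde X$ (and the consequent vanishing of $h^1$ and $h^2$ of its structure sheaf) is recorded.
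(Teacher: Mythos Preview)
The paper does not supply its own proof of this lemma: it is quoted verbatim as \cite[Lemma 2.2]{furushima} and used as a black box. Your argument is correct and is precisely the standard proof one finds in the literature (and presumably in Furushima's paper): Leray for a resolution $\pi:\tilde X\to\bar X$, rationality of $\tilde X$ to kill $h^1(\sheaf_{\tilde X})$ and $h^2(\sheaf_{\tilde X})$, and Serre duality on the normal (hence Cohen--Macaulay) projective surface $\bar X$ to pass from $h^2(\sheaf_{\bar X})$ to $h^0(\sheaf_{\bar X}(K_{\bar X}))$.
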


Recall from \cref{general-structure-thm} that $\bar X$ has at most two singular points, and $P_\infty$ (defined in \cref{main-curve-prop}) is the only point on $\bar X$ which may have a non-rational singularity (since all quotient singularities are rational). The following result characterizes when the singularity at $P_\infty$ is rational or elliptic. It follows immediately via combining \cref{canonical-thm,geometric-sum,basis-lemma}. 

\begin{cor} \label{singularity-cor}
Set $k_{\bar X} := -\left(\omega_0 + \omega_{n+1} + 1 - \sum_{k=1}^n (\alpha_k -1)\omega_k\right)$. 
\begin{enumerate}
\item The singularity at $P_\infty$ is rational iff $k_{\bar X} < 0$.
\item \label{elliptic} Assume $\bar X$ is algebraic. Then the singularity at $P_\infty$ is elliptic iff $0 \leq k_{\bar X} < \omega_{\min}$, where $\omega_{\min} := \min\{\omega_0, \ldots, \omega_{n+1}\}$.
\item Assume $\bar X$ is algebraic. Then $p_g(P_\infty) = |\Sigma|$, where $\Sigma$ is the collection of all $(\beta_0,\cdots, \beta_{n+1}) \in \zz_{\geq 0}^{n+2}$ such that $\beta_j < \alpha_j$, $1 \leq j \leq n$, and $\sum_{j=0}^{n+1} \omega_j\beta_j \leq k_{\bar X}$. \qed
\end{enumerate}
\end{cor}

Recall that a normal surface is {\em Goerenstein} iff the canonical divisor is Cartier. Combining \cref{singularity-cor} with \cref{index-prop} immediately gives the following characterization of Goerenstein primitive algebraic compactifications. 

\begin{cor} \label{gorenstein-cor}
Assume $\bar X$ is algebraic. Then the {\em index} of $\bar X$ (i.e.\ the smallest positive integer $m$ such that $mK_{\bar X}$ is Cartier) is 
\begin{align}
\ind(\bar X) 
	&= \frac{\alpha_{n+1}\omega_{n+1}}{\gcd(\alpha_{n+1}\omega_{n+1}, k_{\bar X})}
	= \frac{\alpha_{n+1}}{\gcd(\alpha_{n+1}, \omega_{n+1}+1)}
	\cdot \frac{\omega_{n+1}}{\gcd(\omega_{n+1}, k_{\bar X})}
\end{align}
where $\alpha_{n+1} := \gcd(\omega_0, \ldots, \omega_n)$ and $k_{\bar X} := -\left(\omega_0 + \omega_{n+1} + 1 - \sum_{k=1}^n (\alpha_k -1)\omega_k\right)$. 
In particular, $\bar X$ is Gorenstein iff $k_{\bar X}$ is divisible by $\alpha_{n+1}\omega_{n+1}$. \qed
\end{cor}

\begin{cor} [cf.\ {\cite[Theorem 6]{brenton-graph-1}}] \label{rational-cor}
Let $\bar X$ be a Gorenstein primitive compactification of $X$ with rational singularities. Then one of the following is true: 
\begin{enumerate}
\item $\bar X \cong \pp^2$,
\item \label{case-wt-proj-1} $\bar X \cong \pp^2(1,1,2)$,
\item \label{case-wt-proj-2} $\bar X \cong \pp^2(1,2,3)$,
\item \label{case-non-wt-proj} $\bar X$ is the hypersurface in $\pp^2(1,2,3,r)$ (with weighted homogeneous coordinates $[w:x:y:z]$) for $5 \geq r \geq 1$ defined by the weighted homogeneous polynomial $F_r$ given by
\begin{align*}
F_r &:= \begin{cases}
		wz - (y^3 + x^2) &\text{if}\ r = 5,\\
		w^2z - (y^3 + x^2 + awxy) &\text{if}\ r = 4,\\
		w^3z - (y^3 + x^2 + awxy + bw^2y^2) &\text{if}\ r = 3,\\
		w^4z - (y^3 + x^2 + awxy + bw^2y^2 + cw^3x) &\text{if}\ r = 2,\\
		w^5z - (y^3 + x^2 + awxy + bw^2y^2 + cw^3x + dw^4y) &\text{if}\ r = 1,\\
		\end{cases}
\end{align*}
where $a,b,c,d \in \cc$.
\end{enumerate}
\end{cor}

\begin{proof}
W.l.o.g.\ assume $\bar X \not\cong \pp^2$. Choose coordinates $(x,y)$ on $X$ such that the corresponding key sequence $\vec \omega := (\omega_0, \ldots, \omega_{n+1})$ of the semidegree on $\kk[x,y]$ associated to $C_\infty$ is in the normal form. In particular,
\begin{enumerate}[label= (\alph{enumi})]
\item $\omega_0 > \omega_1$, and
\item either $n = 0$ or $\alpha_1 > 1$.
\end{enumerate}
 Let $k_{\bar X} := -\left(\omega_0 + \omega_{n+1} + 1 - \sum_{k=1}^n (\alpha_k -1)\omega_k\right)$. \cref{singularity-cor} implies that $|k_{\bar X}| = \omega_0 + \omega_{n+1} + 1 - \sum_{k=1}^n (\alpha_k -1)\omega_k \geq 1$. At first consider the case that $n = 0$. Then $|k_{\bar X}| = \omega_0 + \omega_1 + 1$, so that $\omega_0 < |k_{\bar X}| \leq 2\omega_0$. Consequently, $\alpha_{n+1} = \omega_0$ divides $k_{\bar X}$ iff $\omega_0 = \omega_1 + 1$, i.e.\ $|k_{\bar X}| = 2\omega_1+ 2$. But then $\omega_{n+1} = \omega_1$ divides $k_{\bar X}$ iff $\omega_1 = 2$ or $\omega_1 = 1$. Consequently we have two possibilities: $\omega_0 = 2$, $\omega_1 = 1$, which corresponds to case \eqref{case-wt-proj-1}, or $\omega_0 = 3$, $\omega_1 = 2$, which corresponds to case \eqref{case-wt-proj-2} of the corollary. Now assume $n \geq 1$. Then 
\begin{align*}
|k_{\bar X}| 
	&= \omega_{n+1} + \omega_0 + 1 - \sum_{k=1}^n (\alpha_k -1)\omega_k \\
	&\leq \omega_{n+1} + \omega_0 + 1 - (\alpha_1 -1)\omega_1 \\
	&= \omega_{n+1} + 1 + \omega_1 - \ (\alpha'_0 -1)\omega_0 \quad
	\text{(where $\alpha'_0 := \omega_1/\gcd(\omega_0, \omega_1)$)} 
\end{align*}
Property \eqref{generally-alpha} of normal forms ensures that $\alpha'_0 > 1$, so that $|k_{\bar X}| \leq \omega_{n+1}$. It follows that $\omega_{n+1}$ divides $k_{\bar X}$ iff $\omega_{n+1} = |k_{\bar X}|$ iff
\begin{enumerate}[label=(\roman{enumi})]
\item \label{alpha-k-1} $\alpha_k = 1$ for all $k \geq 2$, 
\item \label{alpha-1-2} $\alpha'_0 = 2$, and
\item \label{omega-1-0-1} $\omega_0 = \omega_1+ 1$.
\end{enumerate}
But then, properties \ref{alpha-k-1} and \ref{alpha-1-2} imply $\omega_1 = \alpha'_0 = 2$, so that $\omega_0 = 3$ due to \ref{omega-1-0-1}. Case \ref{case-non-wt-proj} of the corollary now follows from a straightforward examination of possibilities for equations \eqref{projective-definition}.
\end{proof}

Following the characterization of Gorenstein primitive compactifications with rational singularities,  we now characterize those with the next simplest type of singularities: we say that a primitive compactification $\bar X$ has a point with {\em minimally elliptic} singularity (in the sense of \cite{laufer-elliptic}) if it is Gorenstein and has a point with elliptic singularity. 


\begin{cor} \label{elliptic-cor}
\mbox{}
\begin{enumerate}
\item \label{elliptic-0} Let $\bar X$ be a primitive algebraic compactification of $\cc^2$. Then $\bar X$ has a point with minimally elliptic singularity iff $k_{\bar X} = 0$.
\item \label{ellipticc} Let $m \geq 0$, $\vec \omega := (\omega_0, \ldots, \omega_{m+1})$ be a primitive algebraic key sequence in the normal form and $d$ be a positive integer such that
\begin{enumerate}
\item \label{elliptic-1} $\alpha_{m+1}\omega_{m+1} \in \zz_{\geq 0}\langle \omega_0, \ldots, \omega_m \rangle$, where $\alpha_{m+1} := \gcd( \omega_0, \ldots, \omega_n)$.
\item $\vec \omega \neq (1,1)$.
\item \label{elliptic-3} Either $\vec \omega \neq (3,2)$ or $d \geq 2$.
\end{enumerate}
Define $\vec \omega'_{\vec \omega, d} := (d\omega_0, \ldots, d\omega_{m+1},  \sum_{k=1}^{m+1} (\alpha_k -1)d\omega_k -d\omega_0 - 1)$, where $\alpha_k$'s are as in Definition \ref{key-seqn}. Then 
\begin{enumerate}[label=(\roman{enumii})]
\item \label{min-elliptic-1} For every $\vec \theta := (\theta_1, \ldots, \theta_{m+1}) \in (\cc^*)^{m+1}$, $\xomegaprimeomegadtheta$ is a primitive algebraic compactification of $X$ having a point with minimally elliptic singularity.
\item Conversely, every primitive algebraic compactification of $X$ having a point with minimally elliptic singularity is isomorphic to $\xomegaprimeomegadtheta$ for some $\vec \theta \in (\cc^*)^{m+1}$, $d > 0$, and $\vec \omega$ satisfying properties \eqref{elliptic-1}--\eqref{elliptic-3}. 
\end{enumerate}
\end{enumerate}
\end{cor}

\begin{proof}
Assertion \eqref{elliptic-0} immediately follows from combining \cref{gorenstein-cor} with assertion \eqref{elliptic} of \cref{singularity-cor}. For assertion \eqref{ellipticc}, pick a primitive algebraic key sequence $\vec \omega' = (\omega'_0, \ldots, \omega'_{n+1})$ in the normal form and $\vec \theta \in \ntorus$ such that $\xomegaprimetheta$ has a point with minimally elliptic singularity. Assertion \ref{elliptic-0} implies that  
\begin{align*}
 \omega'_{n+1} =  \sum_{k=1}^n (\alpha_k -1)\omega'_k -\omega'_0 - 1
\end{align*}
In particular, this implies (due to primitiveness of $\vec \omega'$) that $n \geq 1$. Note that $\omega'_0, \ldots, \omega'_n$ completely determines $\omega'_{n+1}$; one has to only ensure that 
$$\alpha_{n+1}(\sum_{k=1}^n (\alpha_k -1) \omega_k - \omega_0) - 1  
= \sum_{k=1}^n (\alpha_k -1)\omega'_k -\omega'_0 - 1 > 0$$
where $\alpha_{n+1} = \gcd(\omega'_0, \ldots, \omega'_n)$ and $\omega_k := \omega'_k/\alpha_{n+1}$, $k = 0, \ldots, n$. Since $n \geq 1$, it follows that $\vec \omega := ( \omega_0, \ldots,  \omega_n)$ is a primitive algebraic key sequence which additionally satisfies $\alpha_n \omega_n \in \zz_{\geq 0} \langle \omega_0, \ldots, \omega_{n-1} \rangle$. Assertion \eqref{ellipticc} now follows in a straightforward manner with $m := n-1$ and $d := \alpha_{n+1}$.
\end{proof}

%

\subsection{Proof of \cref{canonical-thm}} \label{canonical-subsection}
Consider the notations of \cref{projective-embedding}. Let $W := \cc^{n+2}$ with coordinates $( w,  y_1, \ldots, y_{n+1})$. Then $\WP\setminus V(y_0)$ is the quotient of $ W$ by the action of the cyclic group of $\omega_0$ elements given by
\begin{align}
\zeta \cdot ( w,  y_1, \ldots, y_{n+1}) := (\zeta  w,\zeta^{\omega_1} y_1, \ldots,\zeta^{\omega_{n +1}} y_{n+1}) \label{zeta-w-action}
\end{align}
where $\zeta$ is a primitive $\omega_0$-th root of unity. Let $ \pi:  W \to \WP\setminus V(y_0)$ be the quotient map 
$$ ( w,  y_1, \ldots, y_{n +1}) \mapsto [ w:1:  y_1: \cdots : y_{n+1}]$$
Let $ \tilde Y :=  \pi^{-1}(Y)$ and $\tilde C := \pi^{-1}(C)$, where $Y, C$ are as in \cref{projective-embedding}. Then $\tilde Y$ is defined in $W$ by $G_1, \ldots, G_k$ from \eqref{projective-definition}. A computation shows that the matrix of partial derivatives $\partial  G_i/\partial  y_j$, $1 \leq i, j \leq n$ has non-zero determinant on $\tilde C =  \tilde Y \cap \{ w = 0\}$ and therefore $\tilde Y$ is non-singular at every point on $\tilde C$. Moreover, the only ramification points of $ \pi|_{ \tilde Y}$ on $\tilde C$ are the points with zero $ y_{n+1}$-coordinate and $ \pi$ maps every such point to the same point on $C$ (namely the point $P_0$ from \cref{projllary}). \\

It follows from the preceding paragraph that for every $P \in C \setminus \{P_0\}$ and every $Q \in  \pi^{-1}(P)$, $ \pi|_Y$ restricts to an isomorphism near $Q$. It follows that $( w,  y_{n+1})$ defines a system of coordinates near every $P \in C \setminus \{P_0\}$. Now note that $x = y_0/w^{\omega_0} = 1/ w^{\omega_0}$ and the last key form for $\delta$ is $g_{n+1}(x,y) =  y_{n+1}/ w^{\omega_{n+1}}$. It follows that
\begin{align}
d w &= -( w^{\omega_0+1}/\omega_0)dx \notag\\
d y_{n+1} &=  w^{\omega_{n+1}}dg_{n+1} + g_{n+1}d\left( w^{\omega_{n+1}}\right),\quad \text{and} \notag\\
d w d y_{n+1} &= -\frac{ w^{\omega_0+\omega_{n+1}+1}}{\omega_0}\frac{\partial g_{n+1}}{\partial y} dxdy,\quad \text{so that} \notag \\
A_\delta &= \pole_C(w^{\omega_0+\omega_{n+1}+1}\frac{\partial g_{n+1}}{\partial y}) + 1 
= -\omega_0-\omega_{n+1} + \delta\left(\partial g_{n+1}/\partial y  \right) \label{can-1}
\end{align}
where we wrote $\delta$ for $\deltaomegatheta$. We now compute $\delta\left(\partial g_{n+1}/\partial y \right)$ using a result of \cite{kuo-parusinski-pol-arc}. 

\begin{defn} \label{modinition}
Let $q \in \qq$, $f \in \cc[u,v]$, and $\phi(u)$ be a Puiseux series in $u$. We say that 
\begin{enumerate}
\item $\phi(u)$ is a {\em root mod $q$} of $f=0$ iff there exists a Puiseux series $\bar \phi(u)$ such that $f(u,\bar \phi(u)) \equiv 0$ and $\phi(u) - \bar \phi(u) = cu^q + \hot$ for some $c \in \cc$ (where $\hot$ stands for terms with higher order in;
\item $\phi(u)$ is a {\em root exactly mod $q$} of $f=0$ iff there exists a Puiseux series $\bar \phi(u)$ such that $f(u,\bar \phi(u)) \equiv 0$ and $\phi(u) - \bar \phi(u) = cu^q + \hot$ for some $c \in \cc$, $c \neq 0$.
\end{enumerate}
A mod $q$ root $\phi(u)$ of $f=0$ has {\em multiplicity $m$} iff there are exactly $m$ distinct Puiseux series $\bar \phi_1(u), \ldots, \bar \phi_m(u)$ such that $f(u,\bar \phi_k(u)) \equiv 0$ and $\phi(u) - \bar \phi_k(u) = c_ku^q + \hot$ for some $c_k \in \cc$, $1 \leq k \leq m$. Similarly, $\phi(u)$ is an {\em exactly mod $q$ root of multiplicity $m$} of $f=0$ iff there are exactly $m$ distinct Puiseux series $\bar \phi_1(u), \ldots, \bar \phi_m(u)$ which satisfy the conditions of the preceding sentence with $c_k \neq 0$, $1 \leq k \leq m$.
\end{defn}

\begin{rem}
Note that the notion of mod $q$ roots and exactly mod $q$ roots have (obvious) analogues in the case of descending Puiseux series: namely in Definition \ref{modinition} replace every occurrence of `Puiseux series' with `descending Puiseux series', and `$\hot$' with `$\ldt$', where as usual, $\ldt$ stands for terms with lower degree (in $u$). 
\end{rem}

\begin{thm}[{\cite[Theorem 1.1]{kuo-parusinski-pol-arc}}] \label{mod-thm}
Let $q \in \qq_{>0}$ and $\phi(u)$ be a Puiseux series which is a mod $q$ root of $f = 0$ of multiplicity $m \geq 1$. Then $\phi(u)$ is a mod $q$ root of $\partial f/\partial v = 0$ of multiplicity $m-1$.
\end{thm}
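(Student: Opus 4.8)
The plan is to reduce Theorem~\ref{mod-thm} to a non-archimedean Rolle's theorem for the order valuation $\ord_u$ on the field of Puiseux series, and then to read off the required count from a Newton polygon. Throughout I count Puiseux roots with multiplicity (this agrees with the ``distinct'' count in the application, where the relevant polynomial is reduced). Let $K := \bigcup_{p\geq 1}\cc((u^{1/p}))$ be the algebraically closed field of Puiseux series, equipped with the $\qq$-valued valuation $\ord_u$, and factor $f = c(u)\prod_{i=1}^{d}(v-\bar\phi_i)$ over $K$, where $d = \deg_v f$ and $\bar\phi_i\in K$. Since $\Char \cc = 0$, the derivative $\partial f/\partial v$ has degree exactly $d-1$ in $v$ and hence splits into $d-1$ Puiseux roots $\psi_1,\ldots,\psi_{d-1}\in K$.

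First I would normalise by translating and rescaling. Putting $w := v-\phi(u)$, and noting that $\partial/\partial v = \partial/\partial w$, the roots of $f$ become $\eta_i := \bar\phi_i-\phi$ and those of $\partial f/\partial v$ become $\zeta_j := \psi_j-\phi$. By definition, $\phi$ being a mod $q$ root of $f$ of multiplicity $m$ says exactly that $m$ of the $\eta_i$ satisfy $\ord_u(\eta_i)\geq q$, and the assertion to prove is that exactly $m-1$ of the $\zeta_j$ satisfy $\ord_u(\zeta_j)\geq q$. Substituting $w = u^q\tilde w$ and dividing by $c(u)u^{qd}$ produces the monic polynomial $F(\tilde w) := \prod_{i=1}^{d}(\tilde w-\tilde\eta_i)$ with $\tilde\eta_i := u^{-q}\eta_i$; since $\ord_u(\eta_i)\geq q \iff \ord_u(\tilde\eta_i)\geq 0$, the polynomial $F$ has exactly $m$ roots in the valuation ring $\{\ord_u\geq 0\}$, and the roots of $F'$ are the $u^{-q}\zeta_j$. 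The goal thus becomes: $F'$ has exactly $m-1$ roots of non-negative order.

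The heart of the argument is a Newton polygon count. Writing $F = \sum_{k=0}^{d} c_k\tilde w^k$ with $c_d = 1$, the Newton polygon theorem identifies the orders of the roots of $F$ with the negatives of the slopes of the lower convex hull of the points $(k,\ord_u(c_k))$; hence the number of roots with $\ord_u\geq 0$ equals the largest abscissa at which the hull attains its minimum height, that is $m = \max\{k : \ord_u(c_k)=\mu\}$ where $\mu := \min_k\ord_u(c_k)$. Now $F'(\tilde w) = \sum_{k=1}^{d} k c_k\tilde w^{k-1}$, and since each integer $k$ is a unit in $\Char 0$ we have $\ord_u(kc_k)=\ord_u(c_k)$; so the Newton polygon of $F'$ is that of $F$ with the point over $k=0$ deleted and the rest shifted one step left. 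The hypothesis $m\geq 1$ is used precisely here: it guarantees the global minimum $\mu$ is attained at some index $k\geq 1$, so deleting the $k=0$ point does not raise the minimum. Therefore the minimum coefficient order of $F'$ is again $\mu$, attained rightmost at $k-1 = m-1$, and the same count yields exactly $m-1$ roots of $F'$ of non-negative order, as required.

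The step I expect to be most delicate is precisely this control of the derivative under the valuation. Differentiation can create spurious high-order roots through cancellation (for instance $\partial_v(v^2-a)=2v$ has a root of infinite order for every $a$), which is exactly why the naive count fails when $m=0$; the Newton polygon bookkeeping isolates the role of $m\geq 1$ cleanly, turning the potential obstruction into the observation that the constant term of $F$ never sits strictly below the rest of the polygon. A secondary point to state carefully is the multiplicity convention together with the reduction to $F$ monic (handled by absorbing the leading coefficient $c(u)$, which is a unit of $K$ up to a power of $u$); finally I would remark that the entire argument transposes verbatim to degree-wise Puiseux series, replacing $\ord_u$-at-$0$ by the degree valuation and $\hot$ by $\lot$.
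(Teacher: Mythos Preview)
The paper does not give its own proof of this statement: it is quoted verbatim as \cite[Theorem 1.1]{kuo-parusinski-pol-arc} and used as a black box. So there is nothing to compare your argument against in the paper itself; your task was really to supply an independent proof, and you have.

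Your Newton polygon argument is correct. After the translation $w=v-\phi$ and the rescaling $w=u^q\tilde w$, the key identity you use is that for a monic $F=\sum_{k}c_k\tilde w^k$ over a valued field, the number of roots with non-negative valuation equals $\max\{k:\ord_u(c_k)=\mu\}$ where $\mu=\min_k\ord_u(c_k)$; this is a clean reformulation of the Newton polygon theorem. Passing to $F'$ replaces $c_k$ by $kc_k$ (same valuation in characteristic zero), drops the $k=0$ term, and shifts indices down by one. The hypothesis $m\ge 1$ guarantees the minimum $\mu$ is already achieved at some $k\ge 1$, so the rightmost minimum for $F'$ sits at $m-1$. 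This is exactly the count you need. The approach is more elementary than the arc-analytic machinery in \cite{kuo-parusinski-pol-arc} and has the virtue of isolating precisely where $m\ge 1$ enters.

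Your caveat about counting with multiplicity rather than counting distinct roots is well taken and worth keeping explicit. Definition~\ref{modinition} literally counts \emph{distinct} Puiseux roots, whereas your argument counts roots of $F$ and $F'$ with multiplicity. These can disagree for $\partial f/\partial v$ even when $f$ itself is squarefree (e.g.\ $f=v^3-u$ gives $\partial f/\partial v=3v^2$). However, the only uses of Theorem~\ref{mod-thm} in the paper are in Corollaries~\ref{mod-dcor}--\ref{partially-local}, where what is ultimately computed is $\deg_x\bigl((\partial f/\partial y)|_{y=\phi}\bigr)$, a sum over roots \emph{with multiplicity}; so your version is in fact the one the paper needs.
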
 

\begin{rem}
It was assumed throughout \cite{kuo-parusinski-pol-arc} that $f$ is {\em mini-regular} in $v$, i.e.\ if $d := \ord(f)$ then there is a monomial term in $f$ of the form $cv^d$ with $c \in \cc^*$. However, the proof of Theorem 1.1 of \cite{kuo-parusinski-pol-arc} does not use this assumption.
\end{rem}

\begin{cor} \label{mod-dcor}
Let $g\in \cc[x, x^{-1},y]$ and $\phi(x)$ be a descending Puiseux series in $x$ which is a mod $q$ root of $g = 0$ of multiplicity $m \geq 1$. Then $\phi(x)$ is a mod $q$ root of $\partial g/\partial y = 0$ of multiplicity $m-1$.
\end{cor}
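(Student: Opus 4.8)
The plan is to transport the statement to the setting of ordinary Puiseux series via the substitution $x = u^{-1}$ and then invoke Theorem \ref{mod-thm}. This substitution identifies the field $\dpsxc$ of degree-wise Puiseux series in $x$ with the usual field of (Laurent-)Puiseux series in $u$, sending $\sum_{j \le k} a_j x^{j/p}$ to $\sum_{j \le k} a_j u^{-j/p}$. It is order-reversing: it carries the top-degree term of a degree-wise series to the lowest-order term of an ordinary one, so that $\ord_x(\cdot) = -\ord_u(\cdot)$ and ``$\lot$'' in $x$ becomes ``$\hot$'' in $u$. First I would clear denominators so as to land inside a genuine polynomial ring: choosing $N \gg 0$, set $\tilde g(u,y) := u^N g(u^{-1}, y) \in \cc[u,y]$. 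Multiplying by the unit $u^N$ does not affect the roots in $y$, so by the factorization of Theorem \ref{dpuiseux-factorization} the ordinary Puiseux roots of $\tilde g$ in $y$ are exactly the images $\tilde\psi(u) := \psi(u^{-1})$ of the degree-wise Puiseux roots $\psi(x)$ of $g$, and distinct roots correspond to distinct roots. Since the substitution involves only the first variable, $\partial/\partial y$ is unaffected and $\partial \tilde g/\partial y = u^N (\partial g/\partial y)(u^{-1},y)$; hence the same dictionary relates the roots of $\partial g/\partial y = 0$ to those of $\partial \tilde g/\partial y = 0$.

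Next I would translate the notion of a mod $q$ root. Writing $\tilde \phi(u) := \phi(u^{-1})$, the defining relation $\phi - \bar\phi = c x^q + \lot$ becomes $\tilde\phi - \tilde{\bar\phi} = c u^{-q} + \hot$; hence $\phi$ is a (degree-wise) mod $q$ root of $g = 0$ of multiplicity $m$ precisely when $\tilde\phi$ is an (ordinary) mod $(-q)$ root of $\tilde g = 0$ of multiplicity $m$, the count of qualifying roots being preserved by the bijection above. Applying Theorem \ref{mod-thm} to $\tilde g$ then yields that $\tilde\phi$ is a mod $(-q)$ root of $\partial \tilde g/\partial y = 0$ of multiplicity $m-1$; transporting back along $x = u^{-1}$ gives that $\phi$ is a mod $q$ root of $\partial g/\partial y = 0$ of multiplicity $m-1$, as claimed.

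The one point that genuinely needs care---and which I expect to be the main obstacle---is the sign of the exponent. The substitution forces the degree-wise level $q$ to become the ordinary level $-q$ (there is no order-preserving identification, since degree-wise series are bounded above in degree and ordinary ones below), whereas Theorem \ref{mod-thm} is quoted only for positive levels. For $q < 0$ the argument is Theorem \ref{mod-thm} verbatim. For the remaining $q$ one observes, exactly in the spirit of the preceding remark on mini-regularity, that the proof in \cite{kuo-parusinski-pol-arc} makes no use of the sign of the level: it only compares, order by order, the roots of $f$ clustering around $\phi$ with those of $\partial f/\partial v$, and this comparison is insensitive to whether the common level is positive or negative. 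Thus Theorem \ref{mod-thm} applies equally at level $-q$, and together with the (routine) verification that multiplicities are matched exactly under the correspondence this completes the reduction.
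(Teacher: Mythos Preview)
Your approach---substituting $x = u^{-1}$ to convert degree-wise Puiseux series to ordinary ones and then invoking Theorem \ref{mod-thm}---is exactly the idea the paper uses. The difference is that the paper also rescales the second variable: it sets $(u,v) = (1/x,\, y/x^d)$ for $d \gg 0$ and works with $\tilde g := u^{d\deg_y(g)} g(1/u, v/u^d)$. Under this change, a degree-wise mod $q$ root $y = \phi(x)$ of $g$ becomes an ordinary mod $(d-q)$ root $v = u^d\phi(1/u)$ of $\tilde g$; since $d$ can be taken as large as we like, $d - q > 0$ and Theorem \ref{mod-thm} applies verbatim. The relation $\partial \tilde g/\partial v = u^{d(\deg_y(g)-1)}\,\partial g/\partial y$ then carries the conclusion back.

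Your version keeps $y$ fixed, so you land at level $-q$, which is non-positive exactly in the range $q \geq 0$ that matters here. Your resolution---asserting that the argument of \cite{kuo-parusinski-pol-arc} ``makes no use of the sign of the level''---may well be correct, but it is an appeal to an external proof that the reader cannot verify from what you have written, and it is not obviously safe: Kuo--Parusi\'nski work in the local setting where all series have positive order, and one should check that nothing breaks when pole terms are allowed. The paper's extra substitution $y \mapsto y/x^d$ costs one line and removes this issue entirely; I would adopt it rather than leave the gap.
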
 

\begin{proof}
Consider the (birational) change of coordinates $(u,v) = (1/x, y/x^d)$, where $d \gg 1$. Let $\tilde g := u^{d \deg_y(g)} g(1/u, v/u^d) \in \cc[u,v]$. Now note that $y = \phi(x)$ is a mod $q$ root of $g =0$ iff $v = u^d\phi(1/u)$ is a mod $d-q$ root of $\tilde g$. \cref{mod-thm} implies that $v = u^d\phi(1/u)$ is a mod $d-q$ root of $\partial \tilde g/\partial v$ of multiplicity $m-1$. Since $\partial \tilde g/\partial v = u^{d (\deg_y(g) - 1)} \partial g/\partial y$, it follows that $y = \phi(x)$ is a mod $q$ root of $\partial g/\partial y =0$ of multiplicity $m-1$, as required.
\end{proof}

\begin{cor} \label{mod-cor}
Let $f \in \cc[x,x^{-1},y]$ and $\phi(x)$ be a descending Puiseux series in $x$. Let the multiplicity of $\phi(x)$ as a mod $q$ and exactly mod $q$ root of $f = 0$ be respectively $m'$ and $m$ (so that $m' \geq m$). Assume $m' > m \geq 1$. Then the multiplicity of $\phi(x)$ as an {\em exactly mod $q$} root of $\partial f/\partial y = 0$ is also $m$.
\end{cor}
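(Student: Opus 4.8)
The plan is to recast both multiplicities in terms of the \emph{contact orders} of $\phi$ with the degree-wise Puiseux roots of the relevant polynomials, and then to apply Corollary \ref{mod-dcor} twice — once at level $q$ and once at a carefully chosen level $q''$ just below $q$. For a degree-wise Puiseux series $\phi$, a rational $r$, and $h \in \cc[x,x^{-1},y]$, write $\mu_{\leq r}(h)$ and $\mu_{=r}(h)$ for the multiplicities of $\phi$ as a mod $r$ root and as an exactly mod $r$ root of $h=0$. Inspecting Definition \ref{modinition}, a root $\bar\phi$ contributing to $\mu_{\leq r}(h)$ satisfies $\phi - \bar\phi = cx^r + \lot$; it contributes to $\mu_{=r}(h)$ precisely when $c \neq 0$ (equivalently $\ord_x(\phi - \bar\phi) = r$), and otherwise $\ord_x(\phi - \bar\phi) < r$. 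Hence, setting $\mu_{<r}(h) := \mu_{\leq r}(h) - \mu_{=r}(h)$ (the multiplicity coming from roots with $\ord_x(\phi-\bar\phi) < r$), we have $\mu_{\leq r}(h) = \mu_{=r}(h) + \mu_{<r}(h)$. In this language the hypotheses read $\mu_{\leq q}(f) = m'$ and $\mu_{=q}(f) = m$, so $\mu_{<q}(f) = m' - m \geq 1$, and the goal is to show $\mu_{=q}(g) = m$, where $g := \partial f/\partial y$.

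First I would apply Corollary \ref{mod-dcor} at level $q$: since $\mu_{\leq q}(f) = m' \geq 1$, it gives $\mu_{\leq q}(g) = m' - 1$. Because $\mu_{=q}(g) = \mu_{\leq q}(g) - \mu_{<q}(g)$, it therefore suffices to prove that $\mu_{<q}(g) = m' - m - 1$.

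The key step is to evaluate $\mu_{<q}(g)$ by a second application of Corollary \ref{mod-dcor} at an intermediate level. Since $f$ and $g$ are polynomials they have finitely many degree-wise Puiseux roots, so the set of contact orders $\ord_x(\phi - \bar\phi)$ (over roots $\bar\phi$ of $f$ and of $g$) is finite; let $r_0$ be the largest such order that is strictly less than $q$, which exists because $\mu_{<q}(f) = m' - m \geq 1$ already produces roots of $f$ with contact order $< q$. Choose any $q''$ with $r_0 < q'' < q$. Then $q''$ avoids all contact orders of $f$ and of $g$ lying below $q$, and every root of $f$ (resp.\ of $g$) with contact order $< q$ in fact has contact order $\leq r_0 < q''$. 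Consequently $\mu_{\leq q''}(f) = \mu_{<q}(f) = m' - m$ and $\mu_{\leq q''}(g) = \mu_{<q}(g)$. Applying Corollary \ref{mod-dcor} at level $q''$ (legitimate since $\mu_{\leq q''}(f) = m'-m \geq 1$) yields $\mu_{\leq q''}(g) = m' - m - 1$, whence $\mu_{<q}(g) = m' - m - 1$. Combining with the previous paragraph gives $\mu_{=q}(g) = (m'-1) - (m'-m-1) = m$, as desired.

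I expect the main obstacle to be the bookkeeping in this key step: one must ensure that passing from $f$ to $g = \partial f/\partial y$ creates no root whose contact order with $\phi$ lies strictly between $q''$ and $q$, so that $\mu_{\leq q''}(g)$ genuinely computes $\mu_{<q}(g)$. This is exactly why $q''$ is chosen above \emph{all} contact orders (of both $f$ and $g$) that are below $q$; finiteness of the root sets makes such a choice possible. The two hypotheses needed to invoke Corollary \ref{mod-dcor} — positivity of the mod-level multiplicity of $\phi$ for $f$ at each of the levels $q$ and $q''$ — both hold because $m' \geq 1$ and $m' - m \geq 1$.
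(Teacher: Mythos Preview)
Your proof is correct and follows essentially the same strategy as the paper's: apply Corollary~\ref{mod-dcor} once at level $q$ and once at a level just below $q$, then subtract. The only difference is cosmetic---the paper phrases the intermediate level as ``$q-\epsilon$ for all sufficiently small $\epsilon>0$'' rather than explicitly choosing $q''$ above all sub-$q$ contact orders of both $f$ and $g$, but your explicit choice makes the same point and is arguably cleaner.
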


\begin{proof}
Let $n := m' - m \geq 1$. Then for all sufficiently small $\epsilon > 0$, $\phi(x)$ is a mod $(q-\epsilon)$ root of $f=0$ of multiplicity $n$, so that \cref{mod-dcor} implies that it is a mod $(q-\epsilon)$ root of $\partial f/\partial y =0$ of multiplicity $n-1$. On the other hand \cref{mod-dcor} also implies that $\phi(x)$ is a mod $q$ root of $\partial f/\partial y =0$ of multiplicity $m'-1$. It follows that $\phi(x)$ is an exactly mod $q$ root of $\partial f/\partial y =0$ of multiplicity $m'-1 - (n-1) = m$.
\end{proof}

\begin{cor} \label{partially-global}
Let $f \in \cc[x,x^{-1}, y]$ be monic in $y$ and have an analytically irreducible branch at infinity for which $|x| \to \infty$; in other words, assume that 
\begin{align*}
f = \prod_{\parbox{1.5cm}{\scriptsize{$\phi_i$ is a con\-ju\-ga\-te of $\phi$}}}\mkern-27mu \left(y - \phi_{i}(x)\right),
\end{align*}
where $\phi(x)$ is a descending Puiseux series in $x$. Let the Puiseux pairs of $\phi$ be $(\tilde q_1, \tilde p_1), \cdots, (\tilde q_k,\tilde p_k)$, $k \geq 1$. Set $\tilde p_{k+1} := 1$. Then 
\begin{gather}
\min\{\deg_x(\phi(x) - \psi(x)): \psi(x)\ \text{is a descending Puiseux root of}\ \partial f/\partial y = 0\} 
	= \frac{\tilde q_k}{\tilde p_1 \cdots \tilde p_k} \label{partially-global-1} \\
\deg_x\left((\partial f/\partial y)|_{y=\phi(x)}\right) 
	= \sum_{j=1}^k  (\tilde p_j-1)\tilde p_{j+1} \cdots \tilde p_{k+1} \frac{\tilde q_j}{\tilde p_1 \cdots \tilde p_j}. \label{partially-global-2}
\end{gather}	
\end{cor}

\begin{proof}
Let $\tilde p := \tilde p_1 \cdots \tilde p_k$. Then $f = 0$ has precisely $\tilde p$ descending Puiseux roots in $x$, and for each $j$, $1 \leq j \leq k$, the multiplicity of $\phi(x)$ as a mod $\tilde q_j/(\tilde p_1 \cdots \tilde p_j)$ root (resp.\ as an {\em exactly} mod $\tilde q_j/(\tilde p_1 \cdots \tilde p_j)$ root) of $f = 0$ is $\tilde p_j\tilde p_{j+1} \cdots \tilde p_{k+1}$ (resp.\ $(\tilde p_j - 1)\tilde p_{j+1} \cdots \tilde p_{k+1}$). \cref{mod-cor} implies that for each $j$, $1 \leq j \leq k$, the multiplicity of $\phi(x)$ as an exactly mod $\tilde q_j/(\tilde p_1 \cdots \tilde p_j)$ root of $\partial f/\partial y = 0$ is $(\tilde p_j-1)\tilde p_{j+1} \cdots \tilde p_{k+1}$. The corollary follows since $\sum_{j=1}^k  (\tilde p_j-1)\tilde p_{j+1} \cdots \tilde p_{k+1} = \tilde p - 1$, and since $\partial f/\partial y$ has precisely $\tilde p-1$ descending Puiseux roots in $x$.
\end{proof}

Now we go back to the proof of \cref{canonical-thm}. Let $\tilde \phi_\delta(x,\xi):= \phi_\delta(x) + \xi x^{r_\delta}$ be the generic descending Puiseux series of $\delta$ and $(q_1, p_1), \ldots, (q_{l+1}, p_{l+1})$ be the formal Puiseux pairs of $\tilde \phi_\delta$. At first consider the case that $l=0$, i.e.\ $\phi_\delta \in \cc((x))$. Then $g_{n+1} = y - \phi_\delta(x)$, so that $\delta\left(\partial g_{n+1}/\partial y \right) = 0$. On the other hand, $\alpha_k = 1$ for all $k \geq 1$, so that \eqref{log-discrepancy-formula} follows from \eqref{can-1}. Now assume $l \geq 1$. Recall from \eqref{phi-delta-defn} that
\begin{align*}
\delta\left(\partial g_{n+1}/\partial y \right) = p\deg_x\left(\left(\partial g_{n+1}/\partial y \right)|_{y = \tilde \phi_\delta(x,\xi)} \right),
\end{align*}
where $p := p_1 \cdots p_{l+1} = \delta(x)$. Let $\phi(x)$ be the descending Puiseux root of $g_{n+1}$ from assertion \eqref{last-factorization} of \cref{last-curve-prop}. \cref{last-curve-prop} implies that $g_{n+1}$ satisfies the assumption of \cref{partially-global}. Since $\phi_\delta$ is a mod $r_\delta$ root of $g_{n+1}$ and since $r_\delta < q_l/(p_1 \cdots p_l)$, identity \eqref{partially-global-1} implies that 
\begin{align*}
\deg_x\left(\left(\partial g_{n+1}/\partial y \right)|_{y = \tilde \phi_\delta(x,\xi)} \right)
	&= \deg_x\left(\left(\partial g_{n+1}/\partial y \right)|_{y =  \phi(x)} \right).
\end{align*}
It then follows from \eqref{partially-global-2} that
\begin{align*}
\delta\left(\partial g_{n+1}/\partial y \right)
	&= p_1 \cdots p_l \sum_{k=1}^l (p_k-1)p_{k+1} \cdots p_{l+1} \frac{q_k}{p_1 \cdots p_k}
	= \omega_{n+1} - q_{l+1} = \sum_{k=1}^n (\alpha_k-1)\omega_k,
\end{align*}
where the last two equalities follow from \eqref{omega-and-chi-0} and \eqref{omega-and-chi-1}. The theorem then follows from identity \eqref{can-1}. \qed

\appendix

\section{Some properties of key forms} \label{key-appendix}
Let $\delta$ be a divisorial semidegree such that $\delta(x) > 0$, $\tilde \phi_\delta(x,\xi):= \phi_\delta(x) + \xi x^{r_\delta}$ be the associated generic descending Puiseux series, $g_0 = x, g_1 = y, \ldots, g_{n+1}$ be the corresponding sequence of key forms, and $\vec \omega := (\omega_0, \ldots, \omega_{n+1})$ be the key sequence. In this section we collect most of the properties of key forms used in this article. The assertions from \cref{last-curve-prop,basis-lemma} are used multiple times throughout the article; \cref{delta-1-cor} is used only in the proof of \cref{rigid-prop}. \\

The following notations are used in this section: we denote the formal Puiseux pairs (resp.\ characteristic exponents) of $\tilde \phi_\delta$ by $(q_1, p_1), \ldots, (q_{l+1},p_{l+1})$ (resp.\ $\chi_1, \ldots, \chi_{l+1}$); recall that $\chi_j  = q_j/(p_1 \cdots p_{j})$, $1 \leq j \leq l+1$. Also define $\alpha_1, \ldots, \alpha_{n+1}$ as in \cref{key-seqn}.



\begin{prop} \label{last-curve-prop}
\mbox{}
\begin{enumerate}
\item $\omega_j = \delta(g_j)$, $0 \leq j \leq n+1$.
\item the essential subsequence of $\vec \omega$ consists of $l+2$ elements, i.e.\ it is of the form $\vec \omega_e := (\omega_{i_0}, \ldots, \omega_{i_{l+1}})$. Moreover,
\begin{enumerate}
\item \label{alpha-p} $\alpha_{i_k} = p_k$, $1 \leq k \leq l+1$.
\item for each $k$, $1 \leq k \leq l+1$,
\begin{align}
\omega_{i_{k+1}}
		&= \sum_{j=1}^k (p_j -1)\omega_{i_j} + \omega_0 \chi_{k+1} \label{omega-and-chi-0} \\
		&=p_1 \cdots p_{l+1} \left(
		 \sum_{j=1}^k (p_j-1)p_{j+1} \cdots p_k\frac{q_j}{p_1 \cdots p_j} + \frac{q_{k+1}}{p_1 \cdots p_{k+1}}
		 \right)  \label{omega-and-chi-1}
\end{align}		
\end{enumerate}
\item \label{deg-y-g-n+1} For each $j$, $0 \leq j \leq n$, $g_{j+1}$ is monic in $y$ and $\deg_y(g_{j+1}) = \alpha_1 \cdots \alpha_j$; in particular, $\deg_y(g_{n+1}) = p_1 \cdots p_l$, which is also the polydromy order of $\phi_\delta$.	
\item \label{last-factorization} $g_{n+1}$ has a descending Puiseux factorization of the form  
\begin{align}
\begin{aligned}
&g_{n+1} = x^{m_{n+1}}\prod_{\parbox{1.5cm}{\scriptsize{$\phi_i$ is a con\-ju\-ga\-te of $\phi$}}}\mkern-27mu \left(y - \phi_{i}(x)\right),\quad \text{where $\phi$ satisfies} \\
&\phi(x) = \phi_\delta(x) + \text{terms of degree less than or equal to}\ r_\delta. 
\end{aligned} \label{minimal-expansion}
\end{align}
\end{enumerate} 
\end{prop}

\begin{proof}
All the assertions follow from \cite[Propositions 3.21 and 5.3]{algebraicity}. 
\end{proof}

\begin{lemma}[cf.\ {\cite[Fundamental Theorem, Section 8.5]{abhyankar-expansion}}] \label{basis-lemma}
Let $R := \cc[x,x^{-1},y]$. With the above notations, define
\begin{align*}
\scrI &:= \{(\beta_0, \ldots, \beta_{n+1}) \in \zz^{n+2}: 0 \leq \beta_j < \alpha_j,\ 1 \leq j \leq n,\ \text{and}\ \beta_{n+1} \geq 0\},\ \text{and}\\
\scrI' &:= \{(\beta_0, \ldots, \beta_{n+1})  \in \scrI: \beta_0 \geq 0\}. 
\end{align*}
For $\beta := (\beta_0, \ldots, \beta_{n+1}) \in \scrI$, we write $g_\beta$ for the corresponding `monomial' $g_0^{\beta_0} g_1^{\beta_1} \cdots g_{n+1}^{\beta_{n+1}}$ in $g_0, \ldots, g_{n+1}$. Define
\begin{align*}
\scrB &:= \{g_\beta: \beta \in \scrI\},\ \text{and} \\
\scrB' &:= \{g_\beta: \beta \in \scrI' \}.
\end{align*}
Then 
\begin{enumerate}
\item \label{basis} $\scrB$ is a $\cc$-vector space basis of $R$.
\item \label{delta-basis} Let $g \in R$. Write $g$ as $g = \sum_{\beta \in \scrI} a_\beta g_\beta$, where each $a_\beta \in \cc$. Then 
\begin{align}
\delta(g) = \max\{\delta(g_\beta): a_\beta \neq 0\}.
\end{align}
\item \label{basis-plus} Let $R'$ be the $\cc$-vector space spanned by $\scrB'$. Then $R' \supseteq \cc[x,y]$. In particular, for every $f \in \cc[x,y]\setminus \{0\}$, $\delta(f)$ is in the semigroup generated by $\delta(g_0), \ldots, \delta(g_{n+1})$.
\end{enumerate}
\end{lemma}

\begin{proof}
We claim that $\scrB$ spans $R$ as a vector space over $\cc$. Indeed, it suffices to show that for each monomial of the form $x^dy^e$ where $e \geq 0$, there is $g \in \scrB$ such that $\deg_y(x^dy^e-g) < e$. It follows from the definition of $\alpha_j$'s that $e$ can be expressed as $\sum_{j=1}^{n+1} \beta_j\alpha_1 \cdots \alpha_{j-1}$ with $0 \leq \beta_j \leq \alpha_j$ for $j = 1, \ldots, n$. The claim is proved by taking $\beta := (d, \beta_1, \ldots, \beta_{n+1})$ and $g := g_\beta$. \\

Now pick $d \in \zz$ and pairwise distinct elements $\beta^1, \ldots, \beta^m \in \scrI$ such that $\delta(g_{\beta^k}) = d$ for each $k$. Let $(a_1, \ldots, a_m) \in \cc^m\setminus \{0\}$ and $g := \sum_k a_k g_{\beta^k}$.

\begin{proclaim} \label{delta-g-claim}
$\delta(g) = d$; in particular, $g \neq 0$.
\end{proclaim}

\begin{proof}
\cref{phi-omega-algorithm} implies that
\begin{align}
g_j|_{y = \tilde \phi_\delta(x,\xi)} 
	= \begin{cases}
		c_j x^{\omega_j/\omega_0} + \ldt				& \text{for}\ 0 \leq j \leq n,\\
		h(\xi)x^{\omega_{n+1}/\omega_0} + \ldt	& \text{for}\ j = n + 1,
	  \end{cases} \label{g-j-substitution}
\end{align}
for some $c_0, \ldots, c_{n+1} \in \cc^*$ and a non-constant polynomial $h \in \cc[\xi]$, where $\ldt$ denotes terms with lower degree in $x$. Consequently, 
\begin{align*}
& g|_{y = \tilde \phi_\delta(x,\xi)} =  c(\xi) x^{d/\omega_0} + \lot,\ \text{where}\\
& c(\xi) := \sum_k a_k (h(\xi))^{\beta^k_{n+1}} \prod_{j=0}^n c_j^{\beta^k_j}
\end{align*}
If $c(\xi) \not\equiv 0$, then $\delta(g) = d$ and we are done. So assume $c(\xi) \equiv 0$. Then there must exist $k \neq k'$ such that $\beta^k_{n+1} = \beta^{k'}_{n+1}$ and $\sum_{j=0}^n \beta^k_j\omega_j = \sum_{j=0}^n \beta^k_j\omega_j = d - \beta^k_{n+1}\omega_{n+1}$. Let $j'$ be the maximal integer $\leq n$ such that $\beta^k_{j'} \neq \beta'_{j'}$. Then it follows that $(\beta^k_{j'} -\beta^{k'}_{j'})\omega_{j'}$ is in the group generated by $\omega_0, \ldots, \omega_{j'-1}$. Since $|\beta^k_{j'} -\beta^{k'}_{j'}| < \alpha_{j'}$, this is impossible by definition of $\alpha_{j'}$. Consequently $c(\xi) \not\equiv 0$, which proves the claim. 
\end{proof}

Since $\scrB$ spans $R$, it is straightforward to see that assertions \ref{basis} and \ref{delta-basis} follow from \cref{delta-g-claim}. Assertion \ref{basis-plus} follows from combining assertion \eqref{deg-y-g-n+1} of \cref{last-curve-prop} and \cite[Theorem 2.13]{abhyankar-expansion}.
\end{proof}

\begin{cor} \label{delta-1-cor}
Assume that $\delta(g_j) > 0$ for all $j$, $0 \leq j \leq n+1$, and that there exists $f \in \cc[x,y]$ such that $\delta(f) = 1$. Then  
\begin{enumerate}
\item \label{delta-j} there exists $j_* \leq n + 1$ such that $\delta(g_{j_*}) = 1$,
\item \label{j-form}  
\begin{enumerate}
\item either $j_* = n + 1$, or 
\item \label{j* < n+1} $\alpha_{j_*} > 1$ and for all $k$, $j_* < k \leq n+1$, $g_k$ is of the form 
\begin{align*}
g_k = g_{j_*+1} - h_k(g_{j_*}),
\end{align*} 
where $h_k$ is a polynomial in one variable with $\deg(h_k) < \alpha_{j_*}$. 
\end{enumerate}

Moreover, $\delta(g_i) > 1$ for all $i < j_*$.
\item \label{f-form}
\begin{enumerate}
\item \label{f-form-1} either $f = ag_{j_*} + b$, for some $a \in \cc^*$, $b \in \cc$, in which case $g_{j_*}$ is a polynomial, or 
\item \label{f-form-2} $j_* < n+ 1$, $\delta(g_{n+1}) = 1$, and $f = ag_{j_*} + bg_{n+1} + c$ for some $b \in \cc^*$, $a,c \in \cc$. In this case both $g_{n+1}$ and $g_{j_*}$ are polynomials.
\end{enumerate}

In every case the curve $f = 0$ has only one place at infinity. 
\end{enumerate} 
\end{cor}

\begin{proof}
Assertion \ref{delta-j} follows from assertion \eqref{basis-plus} of \cref{basis-lemma}. But then an inspection of \cref{phi-omega-algorithm} immediately yields assertion \eqref{j-form}. We now prove assertion \eqref{f-form}. Assume that \eqref{f-form-1} does not hold. Then assertions \eqref{delta-basis} and \eqref{basis-plus} of \cref{basis-lemma} and the assertion \eqref{j-form} imply that $j_* < n+1$ and
\begin{align}
f = ag_{j_*} + bg_{n+1} + c \label{f-form-2-again}
\end{align} 
for some $b \in \cc^*$, $a,c \in \cc$. Then \eqref{g-j-substitution} implies that
\begin{align}
f|_{y = \tilde \phi_\delta(x,\xi)} =  (ac_{j_*} + bh(\xi)) x^{1/\omega_0} + \ldt, \label{f-sub}
\end{align}
for some $c_{j_*}\in \cc^*$. Since the coefficient of $x^{1/\omega_0}$ in the right hand side of \eqref{f-sub} is a non-constant polynomial in $\xi$, it follows that there is a descending Puiseux root $\phi$ of $f$ such that $\deg_x(\phi - \phi_\delta) \leq r_\delta$. It follows that 
\begin{align*}
\deg_y(f) &\geq \text{polydromy order of $\phi$} 
	 \geq \text{polydromy order of $\phi_\delta$} 
	 = \deg_y(g_{n+1})
\end{align*}
(the last identity uses assertion \eqref{deg-y-g-n+1} of \cref{last-curve-prop}). On the other hand, assertion \eqref{j* < n+1}, identity  \eqref{f-form-2-again}, and assertion \eqref{deg-y-g-n+1} of \cref{last-curve-prop} together imply that $\deg_y(f) = \deg_y(g_{n+1})$. It follows that 
\begin{align}
\deg_y(f) = \deg_y(g_{n+1}) > \deg(g_{j_*}) \label{degree}
\end{align}
 and $\phi$ is in fact the only descending Puiseux root of $f$, i.e.\ the descending Puiseux expansion of $f$ is of the form 
\begin{align*}
f = dx^m  \prod_{\parbox{1.5cm}{\scriptsize{$\phi_{i}$ is a con\-ju\-ga\-te of $\phi$}}}\mkern-27mu \left(y - \phi_{i}(x)\right) 
\end{align*}
for some $d \in \cc^*$. Let $p:= \deg_y(f) = \deg_y(g_{n+1})$. Then the coefficient of $y^p$ in $f$ is $cx^m$. Since $g_{n+1}$ is monic in $y$ (assertion \eqref{deg-y-g-n+1} of \cref{last-curve-prop}), identities \eqref{f-form-2-again} and \eqref{degree} then imply that $m = 0$. This implies that $f = 0$ has only one place at infinity. It then follows from \cite[Theorem 4.3]{algebraicity} that $g_j$ is a polynomial for each $j$, $0 \leq j \leq n+1$. This proves that assertion \eqref{f-form-2} holds. It remains to show that $f = 0$ has only one place at infinity in the case of assertion \eqref{f-form-1}. But since in that case $g_{j_*}$ is a polynomial, this again follows from \cite[Theorem 4.3]{algebraicity}.
\end{proof}

\section{Dependence of $\vec\omega, \vec \theta$ on $\phiomegatheta$} \label{theta-appendix}
Let $\tilde \phi(x,\xi) := \sum_\beta a_\beta x^{\beta} + \xi x^{r}$ be a generic descending Puiseux series. Let $\vec \omega = (\omega_0, \ldots, \omega_{n+1})$ and $\vec \theta = (\theta_1, \ldots, \theta_n) \in \ntorus$ be the output when \cref{phi-omega-algorithm} is run with input $\tilde \phi$. In this section we establish some relations between $\tilde \phi$ and $\vec\omega, \vec \theta$ that we use later to prove the basic properties of normal forms of key sequences. At first we set up some notations to be used throughout this section:

\begin{notation} \label{puiseux-notation}
\mbox{}

\begin{itemize}
\item $(q_1, p_1), \ldots, (q_{l+1},p_{l+1})$, $l \geq 0$, are the formal Puiseux pairs and $\chi_k =  q_k/(p_1 \cdots p_k)$, $k = 1, \ldots, l+1$, are the formal characteristic exponents of $\tilde \phi$ (\cref{generic-descending-defn}). 
\item $p := p_1 \cdots p_{l+1} = \omega_0$. 
\item $\Etildephi := \{\beta: a_\beta \neq 0\}$. 
\item  $\vec \omega_e := (\omega_{i_0}, \ldots, \omega_{i_{l+1}})$ is the essential subsequence of $\vec \omega$.
\item $\alpha_1, \ldots,  \alpha_{n+1}$ are as in Definition \ref{key-seqn}. Recall that $\alpha_{i_k} = p_k$, $1 \leq i \leq l+1$ (assertion \eqref{alpha-p} of \cref{last-curve-prop}).
\end{itemize} 
\end{notation}

\subsection{Results to be used in the proof of properties of normal forms of key sequences}

\begin{thm} \label{theta-thm}
Fix $k,i$ such that $0 \leq k \leq l$ and $\max\{1,i_k\} \leq i < i_{k+1}$. Let $\beta_{i,0}, \ldots, \beta_{i,i-1}$ be as in Remark \ref{unique-remark}. Set $\beta_{i,i} := 0$.
\begin{enumerate}
\item $\theta_i \in \cc[a_\beta: \beta \in \Etildephi][a_{\chi_1}^{-1}, \ldots, a_{\chi_l}^{-1}]$.
\item \label{theta-k2} $\theta_i$ is homogeneous in $a_\beta$'s of degree $\mu_i := p_1 \cdots p_k - \sum_{j=1}^k p_1 \cdots p_{j-1} \beta_{i,i_{j}}$.
\item \label{theta-k3} Let $\eta$ be the weighted degree on $\cc(a_\beta: \beta \in \Etildephi)$ so that the weight of each $a_\beta$ is $\beta$. Then $\theta_i$ is weighted homogeneous with respect to $\eta$ with $\eta(\theta_i) = \beta_{i,0}$.
\end{enumerate}
\end{thm}

\begin{defn} \label{compatible}
We say that a rational number $\beta$ is {\em \reachable{$\tilde \phi$}} if 
\begin{defnlist}
\item \label{trivially-reachable}  either $\beta \leq r = \chi_{l+1}$, or
\item \label{non-trivially-reachable} $\beta > r$ and $\beta\in \zz + \zz\langle \beta' \in \Etildephi: \beta' > \beta \rangle = \zz + \zz\langle \chi_k: 1 \leq k \leq \hat k (\beta) \rangle$, where $\hat k(\beta)$ is defined in \eqref{hat-k-beta}. 
\end{defnlist}
In the case that \ref{non-trivially-reachable} holds, we say that $\beta$ is {\em non-trivially}  \reachable{$\tilde \phi$}. 
\end{defn}

\begin{rem}[Motivation for the term ``key compatible'']
If $\beta'$ is \reachable{$\tilde \phi$}, then for all $c \in \cc$, the formal characteristic exponents of $\tilde \phi + c x^{\beta'}$ and $\tilde \phi$ are the same, so that the essential key sequences of the corresponding semidegrees are identical. If condition \ref{trivially-reachable} of key compatibility holds, then changing the coefficient of $x^\beta$ in $\tilde \phi$ does not have any effect on the semidegree; hence this is the `trivial' case.
\end{rem}

Let $\beta'_1 > \cdots > \beta'_s$ be rational numbers which are \reachable{$\tilde \phi$}. For $c_1, \ldots, c_s \in \cc$, consider the generic descending Puiseux series
\begin{align*}
\tilde \tau_{(c_1, \ldots, c_s)} :=\tilde \phi + \sum_{j=1}^s (c_j - a_{\beta'_j}) x^{\beta'_j} 
\end{align*}
Let $\sigma$ be the semidegree and $h_0, h_1, \ldots$ be the sequence of key forms corresponding to $\tilde \tau_{(c_1, \ldots, c_s)}$. Pick the largest integer $\hat i$ such that $h_i = g_i$ for all $i = 0,1, \ldots, \hat i$ for all choices of $c_1, \ldots, c_s$.

\begin{thm} \label{c-thm}
\mbox{}
\begin{enumerate}
\item \label{c-1} Assume $\beta'_1$ is non-trivially \reachable{$\tilde \phi$}. Then there is a unique element $c_{\tilde \phi}(\beta'_1) \in \cc$ depending only on $\beta'_1$ and $(a_\beta: \beta \in \Etildephi,\ \beta > \beta'_1)$ such that 
\begin{enumerate}
\item if $c_1 \neq c_{\tilde \phi}(\beta'_1) $, then $\sigma(h_{\hat i}) = \hat \omega_{\beta'_1}$.
\item if $c_1= c_{\tilde \phi}(\beta'_1)$, then $\sigma(h_{\hat i}) < \hat \omega_{\beta'_1}$.
\end{enumerate} 

\item \label{c-2} Let $\scrE' \subseteq \Etildephi$, $\tilde \psi(x,\xi) := \sum_\beta a'_\beta x^\beta + \xi x^{r'}$ be a generic descending Puiseux series, and $\beta' \in \qq$ be non-trivially \reachable{$\tilde \phi$}. Assume
\begin{enumerate}
\item there are $a,b \in \cc^*$ and $m$, $1 \leq m \leq N$, such that $a'_\beta = ba^{-\omega_0\beta}$ for all $\beta \in \scrE'$.
\item $\{\beta \in \Etildephi: \beta > \beta'\} \subseteq \scrE'$. 
 \end{enumerate}
 Then $c_{\tilde \psi}(\beta') = ba^{-\omega_0\beta'}c_{\tilde \phi}(\beta')$
 \end{enumerate}
\end{thm}	

\begin{thm}\label{gcd-difference}
For each $i,k$ such that $1 \leq k \leq l$ and $i_k < i \leq i_{k+1}$, define
\begin{align*}
\omega^*_i := \omega_i - \sum_{j=2}^{k}(\alpha_{i_j}-1)\omega_{i_j} - \alpha_{i_1}\omega_{i_1}
\end{align*}
Then $\gcd\{\omega^*_i: i_1 < i \leq n\} = \gcd\{\omega_0\beta - \omega_{i_1}: \beta \in \Etildephi,\ \beta < \chi_1\} $. 
\end{thm}

\subsection{Proof of \cref{theta-thm,c-thm,gcd-difference}}
Let $g_0, \ldots, g_{n+1}$ be the key forms associated to $\tilde \phi$ and $\tilde \omega_i := \omega_i/\omega_0$, $0 \leq i \leq n+1$. It follows from \cref{phi-omega-algorithm} that for each $i$, $1 \leq i \leq n$, 
\begin{align}
g_i|_{y = \tilde \phi(x,\xi)} =  \sum_{r_i < \epsilon \leq \tilde \omega_i} a_{i,\epsilon}x^\epsilon + \rho_i(\xi) x^{r_i} + \ldt
\end{align}
where 
\begin{prooflist}
\item $a_{i,\tilde \omega_i}$ is a non-zero element of $\cc(a_\beta: \beta \in \Etildephi)$ (note that $a_{i,\tilde \omega_i} = \tilde a_i$ in the notation of \cref{phi-omega-algorithm});
\item $r_i < \tilde \omega_i$;
\item $\rho_i$ is a non-constant polynomial in $\xi$ with coefficients in $\cc(a_\beta: \beta \in \Etildephi)$. 
\end{prooflist}
\Cref{a-lemma} extracts some information about $a_{i,\epsilon}$'s and $\rho_i$'s. To state it we need the following definitions: for each $\epsilon \in \qq$ and $k = 0, \ldots, l$, 
\begin{align}
\tilde \omega_{k,\epsilon} &:= \sum_{j=1}^k (p_j-1)\tilde \omega_{i_j} + \epsilon,
\label{tilde-omega-k-beta} \\
\tilde \beta_{k,\epsilon} &:= \epsilon - \sum_{j=1}^k (p_j-1)\tilde \omega_{i_j}.
\label{tilde-beta-k-epsilon} 
\end{align}
Note that
\begin{prooflist}[resume]
\item \label{inverse} $\tilde \omega_{k,\cdot}$ and $\tilde \beta_{k,\cdot} $ are inverse operations. 
\item $\tilde \omega_{\hat k(\beta),\beta} = \hat \omega_\beta/\omega_0$ in the notation of \eqref{hat-k-beta} and \eqref{hat-omega-beta}. In particular, 
\begin{align}
\tilde \omega_{k, \chi_{k+1}} = \omega_{i_{k+1}}/\omega_0  = \tilde \omega_{i_{k+1}}. \label{tilde-omega-chi}
\end{align}
\end{prooflist}
We list some inequalities satisfied by $\tilde \omega_{k,\cdot}$ in \cref{inequality-lemma} below; these are used in the proof of \cref{a-lemma}.

\begin{lemma} \label{inequality-lemma}
\mbox{}
\begin{enumerate}
\item \label{assertion-1} Fix $k$, $1 \leq k \leq l$. Pick $0 = j_0 <  j_1 < \cdots < j_m \leq l$ and $\beta_{j_0}, \ldots, \beta_{j_m} \in \qq$ such that $\sum_{s=0}^m \beta_{j_s} \omega_{i_{j_s}} \leq p_k\omega_{i_k}$. Assume
\begin{defnlist}
\item $m \geq 1$
\item $\sum_{s=1}^m(\chi_{j_s} - \epsilon) 	> \chi_k - \epsilon$. 
\end{defnlist}
Then $\tilde \omega_{k,\epsilon} > \beta_{j_0} + \sum_{s=1}^m( (\beta_{j_s}-1)\tilde \omega_{i_{j_s}} + \tilde \omega_{j_s-1, \epsilon})$. 
\item \label{assertion-2} Let $\beta \in \qq$ be such that $\tilde \omega_{k,\beta} \leq \tilde \omega_{i_{k+1}}$. Then $\tilde \omega_{j,\beta} < \tilde \omega_{i_{j+1}}$ for all $j = 0, \ldots, k-1$. 
\end{enumerate}
\end{lemma}

\begin{proof}
For the first assertion note that 
\begin{align*}
 \tilde \omega_{k,\epsilon} - \beta_{j_0} &- \sum_{s=1}^m (\beta_{j_s}-1)\tilde \omega_{i_{j_s}} - \sum_{s=1}^m \tilde \omega_{j_s-1, \epsilon} \\
	&= \sum_{j=1}^k (p_j-1)\tilde \omega_{i_j} + \epsilon - \sum_{s=0}^m \beta_{j_s} \omega_{i_{j_s}}  + \sum_{s=1}^m(\tilde \omega_{i_{j_s}} - \tilde \omega_{j_s-1,\epsilon}) \\
	&= (p_k\tilde \omega_{i_k} - \sum_{s=0}^m \beta_{j_s} \omega_{i_{j_s}} ) - (\tilde \omega_{i_k} - \tilde \omega_{k-1,\epsilon}) + \sum_{s=1}^m(\tilde \omega_{i_{j_s}} - \tilde \omega_{j_s-1,\epsilon}) \\
	&= (p_k\tilde \omega_{i_k} - \sum_{s=0}^m \beta_{j_s} \omega_{i_{j_s}} ) - (\chi_k - \epsilon) + \sum_{s=1}^m(\chi_{j_s} - \epsilon) 	\\
	& > 0.
\end{align*}
For the second assertion, note that 
\begin{align*}
\tilde \omega_{k-1,\beta} 
	&= \tilde \omega_{k,\beta} - (p_k -1)\tilde \omega_{i_k} 
	\leq \tilde \omega_{i_k} - (p_k\tilde \omega_{i_k} - \tilde \omega_{i_{k+1}})
	< \tilde \omega_{i_k} 
\end{align*}
since $p_k\tilde \omega_{i_k}> \tilde \omega_{i_{k+1}}$. Now continue in the same way with $j = k-2$ and so on.
\end{proof}

\begin{lemma} \label{a-lemma}
Let $\eta$ be the weighted degree on $\cc(a_\beta: \beta \in \Etildephi)$ which assigns weight $\beta$ to $a_\beta$ for all $\beta \in \Etildephi$. Fix $k,i,\epsilon$ such that $0 \leq k \leq l$, $i_k < i \leq i_{k+1}$ and $r_i < \epsilon \leq\tilde \omega_i$. Then
\begin{enumerate}
\item \label{max-assertion} Let $\beta \in \Etildephi$ such that $\tilde \omega_{k,\beta} \leq \tilde \omega_i$ (in particular, $\beta > \chi_k$). Then 
\begin{align}
\tilde \omega_{k,\beta} = \max\{\epsilon': a_{i,\epsilon'}\ \text{depends non-trivially on}\ a_\beta\} 
\label{tilde-omega-k-beta-max}
\end{align}
\item \label{a-i-epsilon} If $k = 0$, then $a_{i,\epsilon} = a_\epsilon$. For $k \geq 1$, $a_{i,\epsilon} \in \cc[a_\beta: \beta \in \Etildephi, \chi_1 \geq \beta \geq \tilde \beta_{k,\epsilon}][a_{\chi_1}^{-1}, \ldots, a_{\chi_k}^{-1}]$.
\item \label{tilde-beta-k-epsilon-assertion} Assume $\epsilon = \tilde \omega_{k,\beta}$ for some $\beta \in \Etildephi$ (in other words, $\beta := \tilde \beta_{k,\epsilon}$ is in $\Etildephi$). Then $a_{i,\epsilon} = e_ka_\beta + a'_{i,\epsilon}$, where
\begin{align}
e_k := \left(\prod_{s=1}^k p_s^{p_{s+1}\cdots p_k}\right) 
\left(\prod_{s=1}^k a_{\chi_s}^{(p_s-1)p_{s+1}\cdots p_k}\right) 
 \label{e-k}
\end{align}
and $a'_{i,\epsilon}$ does not depend on $a_\beta$. 
\item \label{etageneous}  $a_{i,\epsilon}$ is weighted homogeneous with respect to $\eta$ with weighted degree $\epsilon$.
\item  \label{homogeneous} $a_{i,\epsilon}$ is homogeneous in $a_\beta$'s of degree $p_1\cdots p_k$.
\item \label{a-i-chi}  Fix $j$, $k + 1 \leq j \leq l$. Then  
\begin{align*}
\tilde \omega_{k, \chi_j} 
	&= \max\{\epsilon' \in \qq: a_{i, \epsilon'} \neq 0,\ \epsilon' \not\in \frac{1}{p_1 \cdots p_{j-1}}\zz\} \\ 
a_{i, \tilde \omega_{k,\chi_j}} 
	&=  a_{i_{k+1}, \tilde \omega_{k,\chi_j}}  = e_k a_{\chi_j} 
\end{align*}
where $e_k$ is from \eqref{e-k}. 
\item \label{r_i} $r_i = \tilde \omega_{k,r}$.  
\item \label{L_inear}  $\rho_i = e_k\xi+ \rho'_i$, where $e_k$ is from \eqref{e-k} and $\rho'_i \in \cc(a_\beta: \beta \in \Etildephi)$.
\end{enumerate}
\end{lemma}

\begin{proof}
We prove the lemma by induction on $k$. It follows from \cref{phi-omega-algorithm} and the definition of essential subsequence of key forms that there are precisely $i_1 - 1$ elements in $\Etildephi$ which are greater than $\chi_1$, and if we denote them as $\beta_1 > \cdots > \beta_{i_1-1}$, then each $\beta_i$ is an integer and $g_i = y -  \sum_{j=1}^{i-1} a_j x^{\beta_j}$ for each $i = 1, \ldots, i_1$. This implies that
\begin{align*}
g_i|_{y = \tilde \phi(x,\xi)} = \sum_{\beta \geq \beta_i} a_\beta x^{\beta} + \xi x^r,
\quad i = 1, \ldots, i_1.
\end{align*}
It follows that for each $i$, $1 \leq i \leq i_1$, $\rho_i = \xi$, $r_i = r$ and $a_{i,\epsilon} = a_\epsilon$ for each $\epsilon$ such that $r < \epsilon \leq \tilde \omega_i = \beta_i$. In particular \cref{a-lemma} holds for $k = 0$. \\

Now assume it holds for $k$, $0 \leq k < l$. Pick $i$, $i_{k+1} \leq i < i_{k+2}$. We prove by induction on $i$ that it holds for $i+1$. Note that 
\begin{align*}
g_{i}^{\alpha_i} |_{y = \tilde \phi(x,\xi)} 
	=
	a_{i, \tilde \omega_{i}}^{\alpha_i}x^{\alpha_i\tilde \omega_{i}} + \ldt
\end{align*}
Consider $\beta_{i, j}$'s from \cref{unique-remark}. Then $\alpha_i\tilde \omega_{i} = \beta_{i,0} + \beta_{i,i_1}\tilde \omega_{i_1} + \cdots + \beta_{i,i_{k+1}}\tilde \omega_{i_{k+1}}$ (with $\beta_{i,i_{k+1}} = 0$ for $i = i_{k+1}$). Since $g_{i_j}|_{y = \tilde \phi(x,\xi)}  =  a_{i_j, \tilde \omega_{i_j}}x^{\tilde \omega_{i_j}} + \ldt$ for each $j$, $1 \leq j \leq k+1$, \cref{phi-omega-algorithm} implies that  
\begin{align}
\theta_{i} 
	&=  \frac{a_{i, \tilde \omega_{i}}^{\alpha_i}} 
						{a_{i_1,\tilde \omega_{i_1}}^{\beta_{i,i_1}} \cdots a_{i_{k+1},\tilde \omega_{i_{k+1}}}^{\beta_{i,i_{k+1}}}},\ \label{theta-i} \text{and} \\
g_{i+1}|_{y=\tilde \phi_\delta(x,\xi)} 
	&= (g_{i}^{\alpha_i} - \theta_{i} x^{\beta_{i,0}}g_{i_1}^{\beta_{i,i_1}} \cdots g_{i_{k+1}}^{\beta_{i,i_{k+1}}})|_{y=\tilde \phi_\delta(x,\xi)} \label{g-i+1}
\end{align}
Let $\beta \in \Etildephi$. For each $j$, $1 \leq j \leq i$, denote by $\epsilon_{j,\beta}$ the right hand side of \eqref{tilde-omega-k-beta-max}, i.e.\ $\epsilon_{j,\beta}$ is the largest rational number such that $a_{j, \epsilon_{j,\beta}}$ depends non-trivially on $a_\beta$. Assume 
\begin{align}
\tilde \omega_{k+1,\beta} \leq \tilde \omega_{i+1} \label{assumption-0}
\end{align}
Since $\tilde \omega_{i+1} < p_{k+1}\tilde \omega_{i_{k+1}}$, it follows that
\begin{align*}
\tilde \omega_{k,\beta}  
	= \tilde \omega_{k+1,\beta} - (p_{k+1} - 1) \tilde \omega_{i_{k+1}} 
	< \tilde \omega_{i_{k+1}}  
\end{align*}
Assertion \eqref{assertion-2} of \cref{inequality-lemma} then implies that 
\begin{align}
\tilde \omega_{j,\beta} < \tilde \omega_{i_{j+1}},\quad j = 0, \ldots, k.\label{tilde-w-k-beta-ineq}
\end{align}
Therefore assertion \eqref{max-assertion} of \cref{a-lemma} implies by induction that
\begin{align}
\epsilon_{i_{j+1},\beta} &= \tilde \omega_{j,\beta},\ j=0, \ldots, k. \label{tilde-w-j-beta-eq}
\end{align}
From identity \eqref{g-i+1} we see that
\begin{align}
g_{i+1}|_{y=\tilde \phi_\delta(x,\xi)} 	
	&=  g^1_{i+1}(\xi, x) -  g^2_{i+1}(\xi,x),\ \text{where}\notag \\
g^1_{i+1}
	&:= 
		\left( a_{i,\tilde \omega_i}x^{\tilde \omega_i} + \cdots + a_{i,\epsilon_{i,\beta}} x^{\epsilon_{i,\beta}} + \ldt\right)^{\alpha_i}, \label{g-i+1-1}  \\
g^2_{i+1}
	&:=
		 \theta_i
		   x^{\beta_{i,0}} \prod_{j=1}^{k+1}\left(a_{i_j,\tilde \omega_{i_j}}x^{\tilde \omega_{i_j}} + \cdots +
		   a_{i_j,\tilde \omega_{j-1,\beta}} x^{\tilde \omega_{j-1,\beta}} +  \ldt\right)^{\beta_{i,i_j}} \label{g-i+1-2} 
\end{align}
For each $j =1, 2$ and each $\epsilon'$, denote $g^j_{i+1,\epsilon'}$ the coefficient of $x^{\epsilon'}$ in the expansion of $g_{i+1,j}$. Note that the terms in $g^j_{i+1}$ with degree $\alpha_i \tilde \omega_i$ in $x$ cancel each other out. Denote by $\epsilon^j_{i+1,\beta}$ the largest $\epsilon'$ such that $\epsilon' < \alpha_i \tilde \omega_i$ and $g^j_{i+1,\epsilon'}$ depends non-trivially on $a_\beta$. If $i = i_{k+1}$, then $\alpha_i = p_{k+1}$ (assertion \eqref{alpha-p} of \cref{last-curve-prop}). Morever, \eqref{tilde-w-k-beta-ineq} and \eqref{tilde-w-j-beta-eq} imply that $\epsilon_{i_{k+1},\beta} < \tilde \omega_{i_{k+1}}$, so that \eqref{g-i+1-1} implies that 
\begin{align*}
g^1_{i+1,\epsilon^1_{i+1,\beta}}x^{\epsilon^1_{i+1,\beta}}
	&= p_{k+1}a_{i_{k+1},\tilde \omega_{i_{k+1}}}^{p_{k+1}-1 } a_{i_{k+1},\epsilon_{i_{k+1},\beta}}
		x^{(p_{k+1}-1)\tilde \omega_{i_{k+1}} + \epsilon_{i_{k+1},\beta}} 
\end{align*}
In particular, 
\begin{align*}
\epsilon^1_{i+1,\beta}
	&= (p_{k+1} -1)\tilde \omega_{i_{k+1}} + \epsilon_{i_{k+1},\beta} \\
	&= (p_{k+1} -1)\tilde \omega_{i_{k+1}}  + \tilde \omega_{k,\beta}\ (\text{due to \eqref{tilde-w-j-beta-eq}})\\
	&= \tilde \omega_{k+1,\beta}
\end{align*}
On the other hand, if $i_{k+1} < i < i_{k+2}$, then assertion \eqref{max-assertion} of \cref{a-lemma} implies by induction that $\epsilon_{i,\beta} = \tilde \omega_{k+1,\beta}$. Assumption \eqref{assumption-0} implies then that $\epsilon_{i,\beta} < \tilde \omega_i$. Since $\alpha_i = 1$, it follows that
\begin{align*}
g^1_{i+1,\epsilon^1_{i+1,\beta}}x^{\epsilon^1_{i+1,\beta}}
	&= a_{i,\epsilon_{i,\beta}} x^{\epsilon_{i,\beta}}
	=  a_{i, \tilde \omega_{k+1,\beta}} x^{ \tilde \omega_{k+1,\beta}}
\end{align*}
In particular, $\epsilon^1_{i+1,\beta}= \tilde \omega_{k+1,\beta}$ in this case as well. \\

Now we compute $\epsilon^2_{i+1,\beta}$. Let $0 = j_0 < \cdots < j_m \leq k+1$ be the unique sequence of integers such that for all $j = 1, \ldots, k+1$, $\beta_{i,i_j} > 0$ iff $j \in \{j_1, \ldots, j_m\}$. Identity \eqref{tilde-w-k-beta-ineq} and the definition of $g^2_{i+1}$ from \eqref{g-i+1-2} imply that 
\begin{align*}
g^2_{i+1,\epsilon^2_{i+1,\beta}}x^{\epsilon^2_{i+1,\beta}}
	&= 
		\begin{cases}
		0 &\text{if}\ m = 0, \\
		\theta_{i}x^{\beta_{i,0}} \prod_{s=1}^m  
		\left( \beta_{i,i_{j_s}}(a_{i_{j_s},\tilde \omega_{i_{j_s}}} 
		x^{\tilde \omega_{i_{j_s}}})^{\beta_{i,i_{j_s}}-1} 
		a_{i_{j_s},\tilde \omega_{j_s-1,\beta}} x^{\tilde \omega_{j_s-1,\beta}} \right)
			&\text{otherwise.}
		\end{cases}
\end{align*}
Therefore, 
\begin{align*}
\epsilon^2_{i+1,\beta}
	&= \beta_{i,0} + \sum_{s=1}^m ((\beta_{i,i_{j_s}}-1)\tilde \omega_{i_{j_s}} + \tilde \omega_{j_s-1,\beta})
\end{align*}
Now \eqref{tilde-w-k-beta-ineq} implies that 
 \begin{align*}
\beta = \tilde \beta_{k,\tilde \omega_{k,\beta}} < \tilde \beta_{k,\tilde \omega_{i_{k+1}}} = \chi_{k+1}
\end{align*}
Consequently, assertion \eqref{assertion-1} of \cref{inequality-lemma} implies that 
\begin{align*}
\epsilon^2_{i+1,\beta} < \tilde \omega_{k+1,\beta} = \epsilon^1_{i+1,\beta} 
\end{align*}
It follows that 
\begin{align}
\epsilon_{i+1,\beta} 
	&= \epsilon^1_{i+1,\beta} = \tilde \omega_{k+1,\beta} \label{epsilon-1}\\
a_{i+1, \tilde \omega_{k+1,\beta}}
	&= g^1_{i+1,\epsilon^1_{i+1,\beta}}
	=	p_{k+1}a_{i_{k+1},\tilde \omega_{i_{k+1}}}^{p_{k+1}-1 } a_{i_{k+1},\tilde \omega_{k,\beta}} \label{a-i+1+epsilon}
\end{align}
Identity \eqref{epsilon-1} proves assertion \eqref{max-assertion} of \cref{a-lemma}. Applying the inductive hypothesis to assertion \eqref{a-i-chi} shows that the denominator of $\theta_i$ in \eqref{theta-i} is a monomial in $a_{\chi_1}, \ldots, a_{\chi_{k+1}}$. Assertion \eqref{max-assertion} coupled with this observation proves assertion \eqref{a-i-epsilon}. Identity \eqref{a-i+1+epsilon} and the inductive hypothesis implies that
\begin{align*}
a_{i+1, \tilde \omega_{k+1,\beta}}
	&= 	p_{k+1}(e_ka_{\chi_{k+1}})^{p_{k+1}-1 } (e_ka_\beta + a'_{i_{k+1}, \tilde \omega_{k,\beta}}) 
	= e_{k+1}a_\beta + a'_{i+1, \tilde \omega_{k+1,\beta}},\ \text{where}\\
a'_{i+1, \tilde \omega_{k+1,\beta}}
	&:= p_{k+1}(e_ka_{\chi_{k+1}})^{p_{k+1}-1} a'_{i_{k+1}, \tilde \omega_{k,\beta}},
\end{align*}
which proves assertion \eqref{tilde-beta-k-epsilon-assertion}. The inductive hypothesis applied to \eqref{theta-i} also gives 
\begin{align}
\eta(\theta_i) 
	&= \alpha_i \eta(a_{i, \tilde \omega_{i}}) - \sum_{j=1}^{k+1} \beta_{i,i_j} \eta(a_{i_j,\tilde \omega_{i_j}})
	= \alpha_i \tilde \omega_{i} - \sum_{j=1}^{k+1} \beta_{i,i_j}\tilde \omega_{i_j}
	= \beta_{i,0}, \label{eta-theta-i} \\
\deg(\theta_{i}) 
	&= p_{k+1} \deg(a_{i, \tilde \omega_{i}}) - \sum_{j=1}^{k+1} \beta_{i,i_j} \deg(a_{i_j,\tilde \omega_{i_j}})
	= p_1\cdots p_{k+1} - \sum_{j=1}^{k+1} \beta_{i,i_j}p_1 \cdots p_{j-1} \label{omega-theta-i}
\end{align}
Identities \eqref{g-i+1} and \eqref{eta-theta-i} immediately imply assertion \eqref{etageneous}. Note from \eqref{g-i+1-2} that the coefficient of each term in the expansion of $g^2_{i+1}$ is a sum of terms of the form 
$\theta_i\prod_{j=1}^{k+1}\prod_{s=1}^{\beta_{i,i_j}} a_{i_j,\epsilon'_{j,s}}$ with degree 
\begin{align*}
\deg(\theta_i) + \sum_{j=1}^{k+1} \sum_{s=1}^{\beta_{i,i_j}} \deg(a_{i_j,\epsilon'_{j,s}}) 
	= p_1\cdots p_{k+1} - \sum_{j=1}^{k+1} \beta_{i,i_{j}} p_1 \cdots p_{j-1} 
	+ \sum_{j=1}^{k+1}\beta_{i,i_j}p_1 \cdots p_{j-1} 
	= p_1 \cdots p_{k+1}
\end{align*}
which proves assertion \eqref{homogeneous}. Assertion \eqref{a-i-chi} follows from assertions \eqref{max-assertion} and \eqref{tilde-beta-k-epsilon-assertion} by setting $\beta := \chi_j$, $k+2 \leq j \leq l$. Assertions \eqref{r_i} and \eqref{L_inear} also follow from assertions \eqref{max-assertion} and \eqref{tilde-beta-k-epsilon-assertion} by setting $\beta := r$. This completes the proof of \cref{a-lemma}.
\end{proof}

\begin{proof}[Proof of \cref{theta-thm,c-thm}]
\Cref{theta-thm} immediately follows from \cref{a-lemma} and identities \eqref{theta-i}, \eqref{eta-theta-i}, \eqref{omega-theta-i}. Assertion \eqref{tilde-beta-k-epsilon-assertion} of \cref{a-lemma} implies that assertion \eqref{c-1} of \cref{c-thm} holds, and if $\beta' \in \qq$ is non-trivially \reachable{$\tilde \phi$}, then 
\begin{align}
c_{\tilde \phi}(\beta') = - a'_{\hat i, \tilde \omega_{\hat k(\beta'), \beta'}}/e_{\hat k(\beta')} \label{c-tilde-phi}
\end{align}
For assertion \eqref{c-2} of \cref{c-thm} apply assertions \eqref{etageneous} and \eqref{homogeneous} of \cref{a-lemma} to get that
\begin{align*}
a'_{\hat i, \tilde \omega_{\hat k(\beta'), \beta'}} (\tilde \psi) 
	&= b^{p_1 \ldots p_{\hat k(\beta')}} a^{-\omega_0\tilde \omega_{\hat k(\beta'), \beta'}} \\
e_{\tilde k(\beta')}(\tilde \psi)
	&= b^{p_1 \ldots p_{\hat k(\beta')}-1}a^{-\omega_0(\tilde \omega_{\hat k(\beta'), \beta'}-\beta')} 
\end{align*}
Assertion \eqref{c-2} of \cref{c-thm} now follows from \eqref{c-tilde-phi}. 
\end{proof}

\begin{proof}[Proof of \cref{gcd-difference}]
For each $i$, $0 < i \leq n$, define $\hat k_i := \max\{k: 0 \leq k \leq n,\ i_k < i\}$. Then it is straightforward to see that 
\begin{align*}
\omega^*_i = \omega_0(\tilde \beta_{\hat k_i, \tilde \omega_i} - \chi_1), \ i_1 < i \leq n.
\end{align*}
with $\tilde \beta_{\cdot,\cdot}$ defined as in \eqref{tilde-beta-k-epsilon}. Therefore to prove \cref{gcd-difference} it suffices to show that
\begin{align}
\zz \langle \tilde \beta_{\hat k_i, \tilde \omega_i} - \chi_1: i_1 < i \leq n \rangle 
	= \zz \langle \beta - \chi_1: \beta \in \Etildephi,\ \beta <\chi_1 \rangle 
\end{align}

Recall the definition of $\hat k(\beta)$ from \eqref{hat-k-beta}. The following claim is immediate from observation \ref{inverse} following identity \eqref{tilde-beta-k-epsilon}.


\begin{claim} \label{i-beta-claim}
The following are equivalent:
\begin{enumerate}
\item \label{i-to-beta} there are $\beta \in \Etildephi$ and $i \in \{1, \ldots, n\}$ such that $\beta = \tilde \beta_{\hat k_i, \tilde \omega_i}$. 
\item  there are $\beta \in \Etildephi$ and $i \in \{1, \ldots, n\}$  such that $\tilde \omega_i = \tilde \omega_{\hat k(\beta), \beta}$.
\end{enumerate}
If any of these conditions holds, then $\beta > \chi_1$ iff $i > i_1$. \qed
\end{claim}

Fix $i$, $i_1 < i \leq n$. We now study what happens when condition \eqref{i-to-beta} from \cref{i-beta-claim} does not hold. For $i = i_k$, $2 \leq k \leq l$, identity \eqref{omega-and-chi-0} implies that 
\begin{align}
\tilde \beta_{\hat k_i, \tilde \omega_i} = \chi_k \in \Etildephi \label{tilde-beta-chi}
\end{align}
So assume $i_k < i < i_{k+1}$ for some $k$, $1 \leq k \leq l$, and that $ \tilde \beta_{k, \tilde \omega_i} \not\in \Etildephi$. Pick a monomial $a_{\beta_1}^{\gamma_1} \cdots a_{\beta_s}^{\gamma_s}$ that appears in $a_{i,\tilde \omega_i}$. Assertions \eqref{a-i-epsilon}, \eqref{etageneous} and \eqref{homogeneous} of \cref{a-lemma} imply that
\begin{enumerate}[label= (\alph{enumi})]
\item \label{g-1-} $\beta'_j \leq \chi_1$, $1 \leq j \leq s$.
\item \label{g-2} $\sum_{j=1}^s \gamma_{j} = p_1 \cdots p_k$ ,
\item \label{g-3} $\sum_{j=1}^{s}\gamma_j \beta_j= \tilde \omega_i$.
\end{enumerate}
It follows then from definition of $\tilde \beta_{\cdot,\cdot}$ in \eqref{tilde-beta-k-epsilon} that
\begin{align*}
\tilde \beta_{k, \tilde \omega_i}  - \chi_1
	&= \tilde \omega_i - \sum_{j=1}^k(p_j - 1)\tilde \omega_{i_j} - \chi_1\\
	&= \tilde \omega_i - (\tilde \omega_{i_{k+1}} - \chi_{k+1})- \chi_1\ (\text{due to \eqref{tilde-omega-chi}}) \\
	&= \tilde \omega_i - \sum_{j=1}^k (p_j-1)p_{j+1} \cdots p_k\chi_j- \chi_1\ (\text{due to \eqref{omega-and-chi-1}}) \\
	&= \sum_{j=1}^{s}\gamma_j (\beta_j - \chi_1) - \sum_{j=1}^k (p_j-1)p_{j+1} \cdots p_k(\chi_j - \chi_1) \ (\text{due to observations \ref{g-2} and \ref{g-3}}) 
\end{align*}
This implies the following claim:

\begin{claim} \label{i-beta-no-claim}
Fix $i$, $i_1 < i \leq n$. If $\tilde \beta_{\hat k_i, \tilde \omega_i} \not\in \Etildephi$, then $\tilde \beta_{\hat k_i, \tilde \omega_i} - \chi_1 \in \zz \langle \beta - \chi_1: \beta \in \Etildephi,\ \chi_1 > \beta > \tilde \beta_{\hat k_i, \tilde \omega_i} \rangle$. \qed
\end{claim}

\Cref{gcd-difference} follows from combining \cref{i-beta-claim,i-beta-no-claim} and identity \eqref{tilde-beta-chi}. 
\end{proof}

\section{Effect of changes of coordinates on the generic descending Puiseux series of a semidegree} \label{change-appendix}
\begin{thm}[{\cite{jung}}] \label{jung}
Every polynomial automorphism $F$ of $\cc[x,y]$ has a factorization of the form 
\begin{align}
F = F_k \circ \cdots \circ F_1 \label{jung-factorization}
\end{align}
where each $F_j$ is either an {\em affine} map of the form 
\begin{align} \label{affine}
(u,v) \mapsto (au + bv + c, a'u + b'v + c'), \quad a,b,c,a',b',c' \in \cc \tag{Type I}
\end{align}
or a map of the form 
\begin{align} \label{triangular}
(u,v) \mapsto (a u + f(v), b v + c), \quad a, b, c \in \cc,\ f(v) \in \cc[v] \tag{Type II} 
\end{align} 
\end{thm}

Let $\delta$ be a semidegree on $\cc[x,y]$ such that $\delta(x) > 0$. Let $\tilde \phi_\delta(x,\xi)$ be the generic descending Puiseux series of $\delta$ in $(x,y)$ coordinates. For an automorphism $F$ of $\cc[x,y]$, we write $F_*(\tilde \phi_\delta)$ for the generic descending Puiseux series of $\delta$ after change of coordinates by $F$; the precise definition is as follows. 

\begin{defn}
Let $F:\cc[x,y] \to \cc[x,y]$ is an automorphism. Set $(u,v) := (F(x),F(y))$. If $\delta(u) > 0$, then construct the generic descending Puiseux series $\psi(u,\xi)$ of $\delta$ in $(u,v)$ coordinates. The {\em push forward} of $\tilde \phi_\delta$ by $F$ is $F_*(\tilde \phi_\delta) := \psi(u,\xi)|_{u = x}$. 
\end{defn}

\begin{rem} \label{F_*-remark}
 $F_*(\tilde \phi_\delta)$ is defined only if $\delta(F(x)) > 0$. 
Moreover, computing $F_*(\tilde \phi_\delta)$ is tantamount to the following procedure:
\begin{enumerate}[label=(Step \arabic{enumi})]
\item \label{step-1} convert the relation ``$y = \tilde \phi_\delta(x,\xi)|_{\xi = \lambda}$'', $\lambda \in \cc$, to a relation of the form ``$F(y) = \tilde \psi_\lambda(F(x))$'', where $\tilde \psi_\lambda = \sum_\beta a'_\beta x^\beta $ is a descending Puiseux series in $x$.
\item \label{step-2} Take the highest exponent $r'$ of $x$ such that $a'_{r'}$ is a non-constant function of $\lambda$. Then $F_*(\tilde \phi_\delta) = \sum_{\beta > r'} a'_\beta x^\beta + \xi x^{r'}$. 
\end{enumerate}
\end{rem}

%
%

\begin{lemma} \label{change-of-coordinates-0}
\mbox{}
\begin{enumerate}
\item  Assume $\delta(y) > 0$ and $F$ is the \ref{affine} automorphism $(x,y) \mapsto (y,x)$. Then
\begin{enumerate}
\item \label{typeIchange1}
$\deg_x(F_*(\tilde \phi_\delta)) = \omega_0/\omega_1$. 
\item \label{typeIchange2}  
If $\omega_0/\omega_1$ is a positive integer $\geq 2$, then the polydromy order of $F_*(\tilde \phi_\delta)$ is a proper divisor of $\omega_0$. 
\end{enumerate}
\item \label{typeIIIchange} Assume $F$ is an automorphism of the form $(x,y) \mapsto (x,by-f(x))$ for some $b \in \cc^*$ and $f \in \cc[x]$. Then $F_*(\tilde \phi_\delta) =  b \tilde \phi_\delta(x,\xi) - f(x)$. 
\item \label{typeIIchange0}  Assume $F$ is \ref{triangular} with $\deg(f)\delta(y) > \delta(x) > 0$. Then $\deg_x(F_*(\tilde \phi_\delta)) = 1/deg(f)$. 
\end{enumerate}
\end{lemma}

\begin{proof}
If $\delta(y) > 0$ and $F$ is the automorphism $(x,y) \mapsto (y,x)$, then $\tilde \psi_\lambda$ from \ref{step-1} of the construction of  $F_*(\tilde \phi_\delta)$ is the unique descending Puiseux series in $x$ such that  $\tilde \psi_\lambda(\tilde \phi_\delta(x, \xi)|_{\xi=\lambda}) = x$. This immediately implies that $\deg_x(F_*(\tilde \phi_\delta)) = 1/\deg_x(\tilde \phi_\delta)$, which proves assertion \eqref{typeIchange1}. Now note that for each $\lambda \in \cc$, the relation ``$y = \tilde \phi_\delta(x,\xi)|_{\xi = \lambda}$'' is equivalent to the relation ``$(x,y)  = (t^{\omega_0},\tilde \phi_\delta(t^{\omega_0},\lambda))$''. In the scenario of assertion \eqref{typeIchange2}, there are positive integers $m, p$ with $m \geq 2$ such that $\omega_0 = md$ and $\phi_\delta(t^{\omega_0},\lambda)$ is an element in $\cc[t,t^{-1}]$ with degree $d$. Consequently, $\tilde \phi_\delta(t^{\omega_0},\lambda) = at^d \tau(t)$, where $\tau(t)$ is of the form $1 + \sum_{i \geq 1} a_it^{-i}$. It follows that the $d$-th root $\tilde \tau(t)$ of $\tau(t)$ is a power series in $t^{-1}$. Therefore setting $s := t\tilde \tau(t)$ yields that the relation ``$(x,y)  = (t^{\omega_0},\tilde \phi_\delta(t^{\omega_0},\lambda))$'' is equivalent to ``$(x,y)  = (\tilde \psi(s), s^d)$'' for some Laurent series $\tilde \psi(s)$ in $s$. It follows that the polydromy order of $F_*(\tilde \phi_\delta)$ is a divisor of $d$, which proves assertion \eqref{typeIchange2}. \\

Assertions \eqref{typeIIIchange} and \eqref{typeIIchange0} follow in a straightforward way from \ref{step-1} of the construction of  $F_*(\tilde \phi_\delta)$. 
\end{proof}

For the next result we consider a change of coordinate of the form $F:(x,y) \mapsto (\bar ax + f(y), by + c)$ where $\bar a, b \in \cc^*$, $c \in \cc$ and $f = \sum_k c_ky^k\in \cc[y]$ under the assumptions that
\begin{prooflist}
\item \label{assumption-1} $\delta(x) > \delta(y) \neq 0$,
\item \label{assumption-2} $\delta(x) > \deg(f)\delta(y)$. 
\end{prooflist}
Write $\tilde \phi_\delta = \sum_\beta a_\beta x^\beta + \xi x^r$ and $F_*(\tilde \phi_\delta) = \sum_\beta a'_\beta x^\beta + \xi x^{r'}$. Let $\bar \omega_0$ be the polydromy order of $\sum_\beta a_\beta x^\beta$. Fix an $\bar \omega_0$-th root $a$ of $\bar a$. 

\begin{thm} \label{change-of-coordinates-1}
\mbox{}
\begin{enumerate}
\item \label{beta'-a'-beta'-1} Let $\beta_1 := \deg_x(\tilde \phi_\delta)$ and $\beta'_1 := \deg_x(F_*(\tilde \phi_\delta))$. Then
\begin{align}
\beta'_1 
	&=	
		\begin{cases}
			0	& \text{if}\ \delta(y) < 0\ \text{and}\  c \neq 0, \\
			\beta_1 & \text{otherwise}.
		\end{cases} \label{beta'-1} \\
a'_{\beta'_1} 
	&=	
		\begin{cases}
			c	& \text{if}\ \delta(y) < 0\ \text{and}\  c \neq 0, \\
			ba^{-\bar \omega_0\beta_1}a_{\beta_1} & \text{otherwise}.
		\end{cases} \label{a'-beta'-1}
\end{align}
\item \label{same-charac} $F_*(\tilde \phi_\delta)$ has the same formal characteristic exponents and Puiseux pairs as those of $\tilde \phi_\delta$. 
\item \label{c-dependency} For each $\beta > 0$, $a'_\beta$ does not depend on $c$. 
\item \label{f-dependency}  Let $\beta^*_1 := (d+1)\beta_1 - 1$, where
\begin{align*}
d &:= 
		\begin{cases}
			\deg(f) & \text{if}\ \delta(y)  > 0, \\
			\ord(f) & \text{if}\ \delta(y)  < 0.
		\end{cases}
\end{align*}
For each $\beta > \beta^*_1$, $a'_\beta$ does not depend on any of the coefficients of $f$. 
\item \label{>beta'} Let
\begin{align*}
\beta^*  &:= 
		\begin{cases}
			\beta^*_1 & \text{if}\ c = 0, \\
			\max\{\beta^*_1, 0\} & \text{if}\ c \neq 0.
		\end{cases}
\end{align*}
Then for each $\beta > \beta^*$, $a'_\beta = ba^{-\bar \omega_0\beta}a_\beta$.  
\item \label{beta'_1} If $\beta^*_1 > r$, then 
\begin{align}
a'_{\beta^*_1} &=
	\begin{cases}
	b a^{-\bar \omega_0\beta^*_1}a_{\beta^*_1} - \beta_1ba^{-\bar \omega_0\beta_1(d+1)}a_{\beta_1}^{d+1}c_d
				& \text{if}\ \beta^*_1 \neq 0, \\
	c + ba_{\beta^*_1} - \beta_1ba^{-\bar \omega_0}a_{\beta_1}^{d+1}c_d
				& \text{if}\ \beta^*_1 =  0. 
	\end{cases}
\end{align} 
\item \label{0}  If $\max\{r, \beta^*_1\} < 0$ then $a'_0 =	c + ba_0$.  
\end{enumerate} 
\end{thm}

\begin{proof}
Assumptions \ref{assumption-1} and \ref{assumption-2} imply that 
\begin{align*}
\delta(\bar a x + f(y)) &= \delta(x) \\
\delta(by + c) 
	&=	
		\begin{cases}
			0	& \text{if}\ \delta(y) < 0\ \text{and}\  c \neq 0, \\
			\delta(y) & \text{otherwise}.
		\end{cases} 
\end{align*}
which implies \eqref{beta'-1}. Let $\lambda \in \cc$. \Cref{F_*-remark} implies that there is an identity of the form  
\begin{align}
\sum_{\beta'_1 \geq \beta > r'} a'_\beta x^\beta + \sum_{\beta \leq r'} a''_\beta(\lambda) x^{\beta}  
	&= c + b\sum_{\beta_1 \geq \beta > r} a_\beta h_\beta(x) + b\lambda h_r(x) \label{expansion-0}
\end{align}
where $a''_{r'}$ is a non-constant function of $\lambda$, and for each $\beta$, 
\begin{align*}
h_\beta(x)
	&:= 
	\left(\frac{x - f\left(\left(\sum_{\beta'_1 \geq \alpha > r'} a'_\alpha x^\beta + \sum_{\alpha \leq r'} a''_\alpha(\lambda) x^{\alpha}  -c\right)/b\right)}{a^{\bar \omega_0}}\right)^\beta  
	= a^{-\bar \omega_0\beta} x^\beta
		 \left( 1 -  \tilde f(x, \lambda) \right)^\beta,\ \text{where}\\
\tilde f(x, \lambda) 	
	&:= x^{-1}  \left(\sum_k c_kb^{-k}\left(\sum_{\beta'_1 \geq \alpha > r'} a'_\alpha x^\beta + \sum_{\alpha \leq r'} a''_\alpha(\lambda) x^{\alpha}  -c\right)^k \right) %
\end{align*}
Identity \eqref{beta'-1} and assumption \ref{assumption-2} imply that
\begin{align}
\deg_x(\tilde f(x,\lambda)) = -1 + d\beta'_1  < 0.
\end{align}
Therefore we can expand $(1 - \tilde f(x,\lambda))^\beta$ in a descending Puiseux series to get 
\begin{align}
h_\beta(x)
	&= a^{-\bar \omega_0\beta} x^\beta
		\left(1 - 
				\beta x^{-1} \tilde f(x,\lambda) + \beta(\beta-1)x^{-2}(\tilde f(x,\lambda))^2/2 + \cdots 
		\right) \notag \\
	&=  a^{-\bar \omega_0\beta} x^\beta
	\left(1 - 
			\beta x^{-1}  \left(\sum_k c_kb^{-k}\left(\sum_{\beta'_1 \geq \alpha > r'} a'_\alpha x^\beta + \sum_{\alpha \leq r'} a''_\alpha(\lambda) x^{\alpha}  -c\right)^k \right) - \cdots 
	\right) \label{h-beta}
\end{align}
Combining \eqref{h-beta} and \eqref{expansion-0} gives 
\begin{align}
\begin{split}
\sum_{\beta'_1 \geq \beta > r'} &a'_\beta x^\beta + \sum_{\beta \leq r'} a''_\beta(\lambda) x^{\beta}  - c \\
	&= b\sum_{\beta_1 \geq \beta > r}a^{-\bar \omega_0\beta} a_\beta x^\beta
		\left(1 - 
				\beta x^{-1}  \left(\sum_k c_kb^{-k}\left(\sum_{\beta'_1 \geq \alpha > r'} a'_\alpha x^\beta + \sum_{\alpha \leq r'} a''_\alpha(\lambda) x^{\alpha}  -c\right)^k \right) - \cdots 
		\right)  \\
	&\qquad + b\lambda h_r (x)
\end{split}\label{expansion-1} 
\end{align}
The term with highest degree in $x$ in the expansion of the right hand side of \eqref{expansion-1} is 
\begin{itemize}
\item $c$ if $\beta_1 < 0$ and $c \neq 0$,
\item $ba^{-\bar \omega_0\beta_1} x^{\beta_1}$ otherwise.
\end{itemize}
This implies \eqref{a'-beta'-1} and completes the proof of assertion \eqref{beta'-a'-beta'-1}. 
Now pick $\beta \leq \beta'_1$. Let $T_\beta$ be the term in the expansion of the right hand side of \eqref{expansion-1} such that 
\begin{defnlist}
\item \label{a'-beta-appears} $a'_\beta$ appears in $T_\beta$,
\item $\deg_x(T_\beta)$ is the highest among all terms satisfying \ref{a'-beta-appears}. 
\end{defnlist}
Note that 
\begin{defnlist}[resume]
\item if $f$ is a constant polynomial, then $T_\beta  = 0$.
\end{defnlist}
On the other hand, if $f$ is a non-constant polynomial, then a straightforward computation yields the following observations:
\begin{defnlist}[resume]
\item \label{T-beta-1} If either $\beta_1 > 0$, or if $\beta_1 < 0$, $c = 0$ and $d > 0$, then 
\begin{align*}
T_\beta 
	&= ba^{-\bar \omega_0\beta_1} a_{\beta_1} x^{\beta_1} \times (-\beta_1 x^{-1}c_db^{-d}d_\beta(a'_{\beta'_1}x^{\beta'_1})^{d-1}a'_\beta x^{\beta} ) \\
	&= - \beta_1 a^{-\bar \omega_0\beta_1} b^{1-d} c_d d_\beta a_{\beta_1}(a'_{\beta'_1})^{d-1} a'_\beta x^{\beta + \beta_1 + (d-1)\beta'_1 -1},\ \text{where} \\
d_\beta 
	&=
	\begin{cases}
	1 &\text{if}\ \beta = \beta'_1,\\
	d & \text{if}\ \beta < \beta'_1. 
	\end{cases}
\end{align*}
\item If $\beta_1 < 0$, $c = 0$ and $d = 0$, then 
\begin{align*}
T_\beta 
	&=
	 - \beta_1 a^{-\bar \omega_0\beta_1} b^{1-d'} c_{d'} d_\beta a_{\beta_1} (a'_{\beta'_1})^{d'-1}a'_\beta x^{\beta + \beta_1 + (d'-1)\beta'_1 -1}
\end{align*}
where $d' := \ord(f - c_0)$ and $d_\beta$ is as in observation \ref{T-beta-1}. 
\item \label{T-beta-1-3} If $\beta_1 < 0$ and $c \neq 0$, then 
\begin{align*}
T_\beta 
	&=
	\begin{cases}
	0 
	&\text{if}\ \beta = \beta'_1 = 0, \\
	- \beta_1 a^{-\bar \omega_0\beta_1} b^{1-d'} c_{d'} d'_\beta a_{\beta_1} (a'_{\beta'_2})^{d'-1}a'_\beta x^{\beta + \beta_1 + (d'-1)\beta'_2 -1} 
	& \text{if} \ \beta < 0. 
	\end{cases}
\end{align*}
where $d' := \ord(f - c_0)$, $\beta'_2 := \deg_x(\sum_{\beta < 0}a'_{\beta'} + \xi x^{r'})$ is the second largest exponent appearing in $F_*(\tilde \phi_\delta)$, and 
\begin{align*}
d'_\beta 
	&=
	\begin{cases}
	1 &\text{if}\ \beta = \beta'_2,\\
	d' & \text{if}\ \beta < \beta'_2. 
	\end{cases}
\end{align*}
\end{defnlist}  
Assumption \ref{assumption-2} and identity \eqref{beta'-1} imply that $\deg_x(T_\beta) < \beta$ in every case. Equating coefficients of both sides of \eqref{expansion-1} then implies that for each $\beta $ there is an identity of the form
\begin{align}
a'_\beta =
	 \begin{cases}
	 ba^{-\bar \omega_0\beta_1} a_\beta + S_\beta
		 &\text{if}\ \beta \neq 0,\\
	ba^{-\bar \omega_0\beta_1} a_\beta + S_\beta + c
		&\text{if}\ \beta = 0.
	 \end{cases} \label{expansion-2}
\end{align}
where $S_\beta$ is a sum of monomials in $a_{\alpha}$ and $a'_{\alpha'}$'s with $\alpha, \alpha' > 0$. Define
\begin{align*}
\scrE  &:= \{\beta: a_\beta \neq 0\} \\
\scrE'  &:= \{\beta: a'_\beta \neq 0\} 
\end{align*}
Identity \eqref{expansion-2} implies that for each $\beta \in \qq$,
\begin{defnlist}[resume]
\item \label{characteristic-1} if $\beta \in \scrE$ and $\beta \not\in \zz\langle \beta' \in \scrE: \beta' > \beta \rangle$, then $\beta \in \scrE'$.
\item \label{characteristic-2} if $\beta \in \scrE'$, then either $\beta \in \scrE$, or $\beta \in \zz\langle \beta' \in \scrE \cup \scrE': \beta' > \beta \rangle$
\end{defnlist}
Observations \ref{characteristic-1} and \ref{characteristic-2} immediately imply assertion \eqref{same-charac}. If $\delta(y) < 0$ then identity \eqref{beta'-1} shows that assertion \eqref{c-dependency} is vacuously true. On the other hand, if $\delta(y) > 0$, then an application of observation \ref{T-beta-1} with $\beta = 0$ implies that the degree in $x$ of each term on the right hand side of \eqref{expansion-1} in which $c$ appears is negative. This implies assertion \eqref{c-dependency}. For assertion \eqref{f-dependency}, let $T_f$ be the term on the right hand side of the expansion of \eqref{expansion-1} which has the highest degree in $x$ among all terms which depend non-trivially on coefficients of $f$. It is straightforward to see that
\begin{align}
T_f &=
			\begin{cases}
			T_{\beta'_1} 
				&\text{if either $\beta_1 > 0$, or if $\beta_1 < 0$, $d > 0$ and $c = 0$,} \\
			T_{\beta'_2} 
				&\text{if $\beta_1 < 0$, $d > 0$ and $c \neq 0$,} \\
			- \beta_1 a^{-\bar \omega_0\beta_1} b c_0a_{\beta_1}  x^{\beta_1 -1} 
				&\text{if $\beta_1 < 0$ and $d = 0$,}
			\end{cases} \label{T-f}
\end{align}
where $\beta'_2$ is as in observation \ref{T-beta-1-3}. If either $\beta_1 > 0$, or $d > 0$ and $c = 0$, then observation \ref{T-beta-1} implies that $\deg_x(T_f) =(d+1)\beta_1 -1  =: \beta^*_1$. If $\beta_1 < 0$, $d > 0$ and $c \neq 0$, then since $a'_{\beta'_1} = c$, identity \eqref{expansion-1} implies that $\beta'_2 = \beta_1$, so that observation \ref{T-beta-1-3} implies that $\deg_x(T_f) =\beta^*_1$. It follows that $\deg_x(T_f) = \beta^*_1$ in every case, which implies assertion \eqref{f-dependency}. Identity \eqref{expansion-1} and a combination of assertions \eqref{c-dependency} and \eqref{f-dependency} yields assertion \eqref{>beta'}. Moreover, identities \eqref{expansion-2} and \eqref{T-f} imply that 
\begin{align*}
S_{\beta^*_1} = T_f
\end{align*}
which implies assertion \eqref{beta'_1}. Assertion \eqref{0} follows from identity \eqref{expansion-1} and assertion \eqref{f-dependency}. 
\end{proof}

\section{Existence and uniqueness of normal forms} \label{normal-proof-1}
In this section we prove \cref{normal-thm-0,normal-morphism}. 

\subsection{Proof of assertion \eqref{existence-assertion} of \cref{normal-thm-0} (existence of normal forms)}
Let $\delta$ be a semidegree on $\cc[x,y]$ with key sequence $\vec \omega := (\omega_0, \ldots, \omega_{n+1})$ in $(x,y)$-coordinates. It is straightforward to see from assertions \eqref{typeIchange1} and \eqref{typeIchange2} of \cref{change-of-coordinates-0} that after a sequence of changes of coordinates of the form $(x,y) \mapsto (y,x)$ and $(x,y) \mapsto (x,y-f(x))$, we can ensure that
\begin{itemize}
\item either $n = 0$ and $\omega_0 \geq \omega_1$,
\item or $n \geq 1$, $\omega_0 > \omega_1$, and $\frac{\omega_1}{\omega_0} \not\in \{\frac{1}{k}: k \in \zz,\ k \geq 1\} \cup \{0\}$. 
\end{itemize}
Therefore, w.l.o.g.\ we may assume that $\delta$ satisfies properties \eqref{generaly->=1}--\eqref{generally-alpha}. We now find a change of coordinate which ensures that $\delta$ satisfies \eqref{generally-omega} as well (and continues to satisfy the other properties). It is straightforward to see that the set $\scrE_{\vec \omega} $ from \eqref{exponents-omega} can be expressed as follows: 
\begin{align}
\scrE_{\vec \omega} 
	&= 
	\begin{cases}
	\{\beta: \beta = (d+1)\frac{\omega_1}{\omega_0}-1,\ d \in \zz,\ 0 \leq d < \frac{\omega_0}{\omega_1},\ \chi_{l+1} < \beta < \frac{\omega_1}{\omega_0}\} \cup \{0\}
		& \text{if}\ \omega_1 > 0 \\
	\{\beta: \beta = (d+1)\frac{\omega_1}{\omega_0}-1,\ d \in \zz,\ 0 \leq d, \ \chi_{l+1} < \beta <  \frac{\omega_1}{\omega_0}\}
		& \text{if}\ \omega_1< 0
	\end{cases} \label{e-omega}
\end{align}
Let $\beta'_1 > \cdots > \beta'_M$ be the elements of $\scrE_{\vec \omega} \setminus \{0\}$. Let $d_m := (\beta_m+1)\omega_0/\omega_1 -1$, $1 \leq m \leq M$. %
%
%
Let $(a_1, \ldots, a_M)$ be an arbitrary element in $\cc^M$. \Cref{change-of-coordinates-1} implies that there is $(c_1, \ldots, c_M ) \in \cc^M$ such that the following holds: if $F_m$ is the change of coordinates $(x,y) \mapsto (x +c_my^{d_m}, y)$ and $\tilde \psi_m := (F_m \circ F_{m-1} \circ \cdots \circ F_1)_*(\tilde \phi_\delta)$, then for each $m$,
\begin{itemize}
\item the formal characteristic exponents of $\tilde \psi_m$ are the same as those of $\tilde \phi_\delta$;
\item for all $\beta > \beta'_m$, the coefficients of $x^\beta$ in $\tilde \psi_m$ and $\tilde \psi_{m-1}$ are equal (here $\tilde \psi_0$ is defined to be $\tilde \phi_\delta$);
\item the coefficient of $x^{\beta'_m}$ in $\tilde \psi_m$ is $c_m$. 
\end{itemize}
\Cref{c-thm} then implies that there is a unique $(c_1, \ldots, c_M)$ such that after the change of coordinates by $F_m \circ \cdots \circ F_1$, property \eqref{generally-omega} is satisfied with $\scrE_{\vec \omega}$ replaced by $\scrE_{\vec \omega}\setminus \{0\}$. In the case that $\omega_1 > 0$, a subsequent change of coordinates of the form $(x,y) \mapsto (x, y + b)$ then ensures that property \eqref{generally-omega} is completely satisfied. Since none of these changes of coordinates affect $\omega_0$ or $\omega_1$ (so that properties \eqref{generaly->=1}--\eqref{generally-alpha} continue to hold), this completes the proof.  \qed \\

In \cref{unique-section} we use the following lemma, which follows immediately from the preceding discussion.

\begin{lemma} \label{c-lemma}
Let $\delta$ be a semidegree on $\cc[x,y]$ with generic descending Puiseux series $\tilde \phi_\delta(x,\xi) = \sum_\beta a_\beta x^\beta + \xi x^r$ and key sequence $\vec \omega := (\omega_0, \ldots, \omega_{n+1})$ in $(x,y)$ coordinates. Assume $\omega_1 \neq 0$. Then the following are equivalent: 
\begin{enumerate}
\item $\vec \omega$ satisfies \eqref{generally-omega}.
\item $a_\beta = c_{\tilde \phi_\delta}(\beta) $ for all $\beta \in \scrE_{\vec \omega}$ (where $c_{\tilde \phi_\delta}(\beta) $ is defined as in \cref{c-thm}). \qed
\end{enumerate}
\end{lemma}  

\subsection{Proof of \cref{normal-morphism} and assertion \eqref{unique-assertion} of \cref{normal-thm-0} (uniqueness of normal forms and automorphisms that preserve the normal form)} \label{unique-section}
In this section we prove \cref{normal-morphism} and assertion \eqref{unique-assertion} of \cref{normal-thm-0} simultaneously. More precisely, we assume the following:
\begin{itemize}
\item $\delta$ is a divisorial semidegree on $\cc[x,y]$ such that $\delta(x)  > 0$ and the  key sequence $\vec \omega := (\omega_0, \ldots, \omega_{n+1})$ of $\delta$ in $(x,y)$ coordinates is in the normal form. T
\item $F: \cc[x,y] \to \cc[x,y]$ is an automorphism such that the key sequence $\vec \omega'$ of $\delta$ with espect to $(x',y') := (F(x), F(y))$ coordinates is also in the normal form.
\end{itemize}
Under these assumptions we show that each of the assertions \eqref{trivial-F}--\eqref{unique-puiseux} of \cref{normal-morphism} holds, and that in each case $\vec \omega' = \vec \omega$. \\

Consider a factorization
\begin{align*}
F = F_k \circ \cdots \circ F_1 
\end{align*}
as in \eqref{jung-factorization} such that the length $k$ of the factorization is the {\em minimum}. It then follows that
\begin{enumerate}[label=(min-\arabic{enumi})]
\item \ref{affine} and \ref{triangular} maps alternate, i.e.\ $F_j$ is \ref{affine} iff $F_{j+1}$ is \ref{triangular} for all $j$.
\item \label{tr>1} For each $F_j$ of \ref{triangular}, $F_j(u,v) =  (au + f(v), bv + c) $ for some $f$ with $\deg(f) > 1$, and 
\item \label{non-trivially-affine} if there exists $j$ such that $F_j$ is \ref{affine} of the form $(u,v) \mapsto (au + bv + c, a'u + b'v + c')$ with $a' = 0$, then in fact $j=k=1$ and $F = F_1$.
\end{enumerate}
Now we show that $k = 1$. For all $j$, $1 \leq j \leq k$, let $(x_j,y_j) := (F_j \circ \cdots \circ F_1)(x,y)$. Let $\vec \omega^j := (\omega^j_0, \ldots, \omega^j_{n_j+1})$ be the key sequence of $\delta$ and $\tilde \phi_j(x_j, \xi) = \phi_j(x_j) + \xi x_j^{r_j}$ be the generic descending Puiseux series of $\delta$ in $(x_j,y_j)$ coordinates. 

\begin{claim} \label{initially-bad}
Assume $k > 1$. Then there exists $j$, $1 \leq j \leq k$, such that
\begin{enumerate}
\item $F_j$ is \ref{triangular},
\item Let $s_j := 1/\deg_{x_j}(\tilde \phi_j) = \omega^j_0/\omega^j_1$ is an integer $\geq 2$.
\item $\phi_j \neq 0$, or equivalently, $n_j \geq 1$.
\end{enumerate}
\end{claim}

\begin{proof}
If $F_1$ is \ref{affine}, then \ref{non-trivially-affine} implies that $a' \neq 0$, so that $\delta(y_1) = \delta(x) \geq \delta(x_1)$. Then \ref{tr>1} and assertion \eqref{typeIIchange0} of \cref{change-of-coordinates-0} imply that $j=2$ satisfies the first two assertions of \cref{initially-bad}. On the other hand, assertion \eqref{typeIchange2} implies that the number of formal Puiseux pairs of $\phi_2$ is one more than that of $\phi_1$. In particular, $\phi_2 \neq 0$, which proves \cref{initially-bad} in this case.\\

Now assume $F_1$ is \ref{triangular}. If $\deg(f)\delta(y) > \delta(x)$, then $j=1$ satisfies the claim (assertion \eqref{typeIIchange0} of \cref{change-of-coordinates-0}). So assume $\deg(f)\delta(y) \leq  \delta(x)$. Since $\deg(f) \geq 2$ (\ref{tr>1}), the normality of $\vec \omega$ implies that $\deg(f)\delta(y) <  \delta(x)$. 
It follows that $\delta(x_1) = \delta(x)$ and $\delta(y_1) = \delta(y)$. Since $F_2$ is \ref{affine}, \ref{tr>1} then implies that 
\begin{prooflist}
\item \label{eq0} if $a = 0$, then $\omega^2_1 > \omega^2_0$;
\item \label{neq0} if $a \neq 0$, then the highest degree term of $\tilde \phi_2(x, \xi)$ is $\frac{a'}{a}x_2$, so that $\omega^2_0 = \omega^2_1$ and $\phi_2 \neq 0$. 
\end{prooflist} 
In particular, $\vec \omega^2$ is not in the normal form. It follows that $k \geq 3$. Now the same arguments as in the first paragraph of the proof of this claim shows that $j = 3$ satisfies the claim. 
\end{proof}

\begin{claim} \label{continually-bad}
Let $j$ be as in \cref{initially-bad}. Then $k \geq j+2$. Moreover $j+2$ also satisfies \cref{initially-bad}.
\end{claim}

\begin{proof}
Let $j$ be as in \cref{initially-bad}. Then $\vec \omega^j$ is not in the normal form, so that $k \geq j+1$. $F_{j+1}$ is \ref{affine} with $a' \neq 0$, so that $\delta(y_{j+1}) = \delta(x_j) = s_j\delta(y_j)$. At first assume $a= 0$. Then $\delta(x_{j+1}) = \delta(y_j)$, so that $\omega^{j+1}_1 = s_j \omega^{j+1}_0 > 0$. Consequently, $\vec \omega^{j+1}$ violates \eqref{trivially-omega} and \eqref{generally-less}; therefore it is not in the normal form. In particular, $k > j +1$. The same arguments as in the first paragraph of the proof of \cref{initially-bad} then ensure that $j+2$ also satisfies \cref{initially-bad}. On the other hand, if $a \neq 0$, then, as in observation \ref{neq0} from the proof of \cref{continually-bad}, it follows that 
$\omega^{j+1}_0 = \omega^{j+1}_1 > 0$ and $n_{j+1} \geq 1$. Since $ \vec \omega^{j+1}$ is not in the normal form, we have that $k \geq j+2$. The same arguments as in the first paragraph of the proof of \cref{initially-bad} then show that $j+2$ satisfies \cref{initially-bad}, as required.
\end{proof}

Since $(x_k,y_k) = (x',y')$ and $\vec \omega^k = \vec \omega'$ is in the normal form, \cref{initially-bad,continually-bad} imply that $k = 1$, i.e.\ $F = F_1$. This immediately implies that
\begin{prooflist}[start=3]
\item if $\omega_0 = \omega_1 = 1$, then assertion \eqref{trivial-F} of \cref{normal-morphism} holds, and $\vec \omega' = \vec \omega$;
\item if $\omega_0 = 1$ and $\omega_1 = 0$, then assertion \eqref{trivial-F-zero} of \cref{normal-morphism} holds, and $\vec \omega' = \vec \omega$;
\item \label{general-observation} otherwise $F : (x,y) \mapsto (\bar a x + f(y), \bar b y + c)$ where $\bar a, \bar b \in \cc^*$, $c \in \cc$ and $f(y) \in \cc[y]$. 
\end{prooflist}
Now we prove assertion \eqref{positive-F} of \cref{normal-morphism}. So assume $\omega_0 > \omega_1 > 0$. Let $d := \deg(f)$. The normality of $\vec \omega$ implies that $d\omega_1 \neq \omega_0$. If $d\omega_1 > \omega_0$, then observation \ref{general-observation} would imply that $\omega'_0 < \omega'_1$, contradicting the normality of $\vec \omega'$. It follows that $d\omega_1 < \omega_0$. %
Assertion \eqref{same-charac} of \cref{change-of-coordinates-1} implies that $\vec \omega$ and $\vec \omega'$ have the same formal characteristic pairs, so that the sets $\Eomega$ and $\Eomegaprime$ (defined as in \eqref{exponents-omega}) are identical. Since both $\vec \omega$ and $\vec \omega'$ are normal, \cref{c-lemma} implies that 
\begin{align}
a_\beta = c_{\tilde \phi_\delta}(\beta),\ a'_\beta = c_{\tilde \psi_\delta}(\beta) 
\label{c-phi-psi-0}
\end{align}
for all $\beta \in \Eomega$. Define $\beta_1, \beta'_1, \beta'$ as in \cref{change-of-coordinates-1}. Assertion \ref{positive-F} of \cref{normal-morphism} is a straightforward consequence of \cref{beta'-claim} below.

\begin{claim} \label{beta'-claim}
$\beta' \leq \chi_{l+1}$ (where $\chi_{l+1}$ is as in assertion \eqref{positive-F} of \cref{normal-morphism}).
\end{claim}

\begin{proof}
Assertion \eqref{>beta'} of \cref{change-of-coordinates-1} and assertion \eqref{c-2} of \cref{c-thm} imply that 
\begin{align}
c_{\tilde \psi_\delta}(\beta') = ba^{-\bar \omega_0\beta'}c_{\tilde \phi_\delta}(\beta') \label{c-phi-psi-1}
\end{align} 
where $b := \bar b$, $a$ is a primitive $\bar \omega_0$-th root of $\bar a$, and $\bar \omega_0$ is the polydromy order of $\sum_\beta a_\beta x^\beta$. Now assume to  the contrary of the claim that $\beta' > \chi_{l+1}$. The normality of $\vec \omega$ implies that $\beta'_1 \neq 0$, so that either $\beta' = \beta'_1 > 0$, or $\beta' = 0 > \beta'_1$. At first consider the case that $\beta' = \beta'_1 > 0$. Then assertion \eqref{beta'_1} of \cref{change-of-coordinates-1} implies that
\begin{align}
a'_{\beta'_1} &=
	b a^{-\bar \omega_0\beta'_1}a_{\beta'_1} - \beta_1ba^{-\bar \omega_0\beta_1(d+1)}a_{\beta_1}^{d+1}c_d
	\label{a'}
\end{align}
On the other hand, since $\beta'_1 > \chi_{l+1}$, it follows that $\beta'_1 \in \Eomega$. Identities \eqref{c-phi-psi-0} and \eqref{c-phi-psi-1} then imply that $a'_{\beta'_1} =
b a^{-\bar \omega_0\beta'_1}a_{\beta'_1} $, so that \eqref{a'} imply that $c_d = 0$, which is impossible. Now assume $\beta' = 0 > \beta_1$. This implies in particular that $c \neq 0$. Now assertion \eqref{0} of \cref{change-of-coordinates-1} implies that 
\begin{align*}
a'_0 =	c + ba_0
\end{align*}
On the other hand, since $0 \in \Eomega$, identities \eqref{c-phi-psi-0} and \eqref{c-phi-psi-1} imply that $a'_{0} = b a_{0} $, which implies in turn that $c = 0$. This gives the desired contradiction and concludes the proof of the claim. 
\end{proof}

Assertion \eqref{negative-F} of \cref{normal-morphism} follows from exactly the same argument as in the proof of assertion \eqref{positive-F} with $d$ replaced by $\ord(f)$. In particular, \cref{beta'-claim} remains true in the case that $\omega_0 > \omega_1 > 0$. Assertion \eqref{unique-puiseux} of \cref{normal-morphism} then follows from assertion \eqref{>beta'} of \cref{change-of-coordinates-1}. \\

Note that we proved all the assertions of \cref{normal-morphism} under the weaker assumption that $\vec \omega$ and $\vec \omega'$ are both in normal form (i.e.\ we did {\em not} assume that $\vec \omega' = \vec \omega$). \Cref{theta-thm} and assertion \eqref{unique-puiseux} of this stronger version of \cref{normal-morphism} then imply that $\vec \omega' = \vec \omega$. This completes the proof of assertion \ref{unique-assertion} of \cref{normal-thm-0}. \qed

\section{Automorphisms preserving a semidegree} \label{normal-proof-2}
In this section we prove \cref{preserving-thm}. Let the notation be as in \cref{normal-morphism,preserving-thm}. If $n = 0$, then $\phi(x) = 0$, and therefore assertion \eqref{n=0} of \cref{preserving-thm} is an immediate consequence of assertion \eqref{unique-puiseux} of \cref{normal-morphism}. So assume $n \geq 1$. \Cref{normal-morphism} implies that the generic descending Puiseux series of $\delta$ in $(x',y')$ coordinates is
\begin{align}
\tilde \psi_\delta(x',\xi) = b\sum_\beta a_\beta a^{-\bar \omega_0\beta} x'^{\beta} + \xi x'^{r} \label{psi-delta}
\end{align}
where $\tilde \phi_\delta(x,\xi) = \sum_\beta a_\beta x^{\beta} + \xi x^r$ is the descending Puiseux series of $\delta$ in $(x,y)$ coordinates. Let $\scrG$ be the group consisting of all $F$ described in assertion \eqref{n>=1} of \cref{preserving-thm}. It suffices to show that $F \in \scrG$ iff $\tilde \psi_\delta(x,\xi)$ is conjugate to $\tilde \phi_\delta(x,\xi)$. At first assume $F \in \scrG$. Then 
\begin{align*}
\tilde \psi_\delta(x',\xi) 
	= \sum_\beta a_\beta a^{\bar \omega_1-\bar \omega_0\beta} x'^{\beta} + \xi x'^{r}
	= a_{\beta_1}x'^{\beta_1} +  \sum_{\beta < \beta_1} a_\beta a^{\bar \omega_1-\bar \omega_0\beta} x'^{\beta} + \xi x'^{r}
\end{align*}
where $\beta_1 := \deg_x(\tilde \phi_\delta) = \omega_1/\omega_0$. If $n = 1$, then $\phi(x)$ has only one term $a_{\beta_1}x^{\beta_1}$, so that $\tilde \psi_\delta(x, \xi) = \tilde \phi_\delta(x,\xi)$. On the other hand, if $n \geq 2$, then \cref{gcd-difference} and the definition of $a$ from assertion \eqref{n>=1} of \cref{preserving-thm} implies that $a^{\bar \omega_1-\bar \omega_0\beta} = 1$ for all $\beta$ such that $a_\beta \neq 0$. It follows again that $\tilde \psi_\delta(x, \xi) = \tilde \phi_\delta(x,\xi)$ and completes the proof of the ($\im$) implication.\\

Now assume that $\tilde \psi_\delta(x,\xi)$ is conjugate to $\tilde \phi_\delta(x,\xi)$. Then it follows from \eqref{psi-delta} that there is an $\bar \omega_0$-th root of unity $\zeta$ such that $ba^{-\bar \omega_0\beta} = \zeta^{\bar \omega_0\beta}$ for all $\beta$ such that $a_\beta \neq 0$. Since $n \geq 1$, it follows that $\phi \neq 0$. In particular, $a_{\beta_1} \neq 0$, where $\beta_1 := \omega_1/\omega_0 = \bar \omega_1/\bar \omega_0$. Therefore $b = (\zeta a)^{\bar \omega_0\beta_1} =  (\zeta a)^{\bar \omega_1} = a'^{\bar \omega_1}$, where $a' := \zeta a$. Since $a'^{\bar \omega_0} = a^{\bar \omega_0}$, we can replace $a$ by $a'$ and assume that
\begin{align*}
b = a^{\bar \omega_0\beta}\ \text{for all $\beta$ such that $a_\beta \neq 0$.} 
\end{align*}
In particular, $b = a^{\bar \omega_1}$, which implies that $F \in \scrG$ in the case that $\phi$ has only one monomial term, or equivalently, if $n = 1$. On the other hand if $\phi$ has more than one monomial term, then it follows that $a^{\bar \omega_0\beta -\bar \omega_1 } = 1$ for all $\beta$ such that $a_\beta \neq 0$. \Cref{gcd-difference} then implies that $a$ is an $\baromegastar$-th root of unity ($\baromegastar$ being as in assertion \ref{aut-general}), so that $F \in \scrG$ in this case as well. This completes the proof of assertion \eqref{n>=1} of \cref{preserving-thm}.

\bibliographystyle{alpha}
\bibliography{bibi}



\end{document}